\newtheorem{thm}{Theorem}[section]
\newtheorem{lem}[thm]{Lemma}
\newtheorem{prop}[thm]{Proposition}
\newtheorem{cor}[thm]{Corollary}
\newcommand{\pattern}{  \begin{minipage}[c]{1.45em}
        {\scalebox{0.5}{\includegraphics{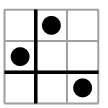}}}
    \end{minipage}   }
\begin{document}
\captionsetup[figure]{labelfont={bf},labelformat={default},labelsep={period},name={Fig.}}
	\begin{center}
		{\large \bf On enumeration of pattern-avoiding Fishburn permutations }
	\end{center}

	\begin{center}
		Yujie Du
		  and  Philip B. Zhang\footnote[1]{
            Email: \texttt{zhang@tjnu.edu.cn}

            This work was supported by the  National Natural Science Foundation of China (No.~12171362).
        }

	\end{center}

	\begin{center}
		College of Mathematical Science \\
		Tianjin Normal University, Tianjin  300387, P. R. China\\[6pt]
		\par\end{center}
	\noindent
	\textbf{Abstract.}
    In this paper, we prove two conjectures of Egge on the enumeration of several classes of pattern-avoiding Fishburn permutations. Our results include enumerating Fishburn permutations avoiding pattern 321 and one of the following three types of classical patterns: a pattern of size 4, two patterns of size 4, or a pattern of size 5.

		\noindent
	\textbf{Keywords:} Fishburn permutations; Fishburn numbers; pattern avoidance; Fibonacci numbers; Pell numbers

	\noindent
	\textbf{AMS Classification 2020:} 05A15

	\section{Introduction}
	The main purpose of this paper is to prove certain enumeration formulae, conjectured by Egge~\cite{Egge2022PatternAvoidingFP}, about pattern-avoiding Fishburn permutations. A permutation of length $n$ is a rearrangement of the set $[n]:=\{1,2,\ldots,n\}$. We let $S_n$ denote the set of permutations of $[n]$. A permutation $\pi_1\pi_2\cdots\pi_n\in S_n$ avoids a {\em pattern} $p=p_1p_2\cdots p_k\in S_k$  if there is no subsequence $\pi_{i_1}\pi_{i_2}\cdots\pi_{i_k}$ such that $\pi_{i_j}<\pi_{i_m}$ if and only if $p_j<p_m$. Two permutations $\sigma$ and $\tau$ are {\em Wilf-equivalent} if for all $n$, the number of permutations of $S_n$ which avoid $\sigma$ is the same as the number of permutations of $S_n$ which avoid $\tau$.
    We refer the reader to the book by Kitaev~\cite{Kitaev2011} for a comprehensive introduction to the theory of permutation patterns.

The {\em Fishburn pattern}  $\pattern$
is a special case of {\em bivincular patterns} introduced in \cite{BousquetMlou200822freePA}. A permutation $\pi=\pi_1\pi_2\cdots\pi_n$ avoids $\pattern$ if there are no indices $i<j$ such that $\pi_{i}\pi_{i+1}\pi_{j}$ form a copy of the pattern 231 and $\pi_i=\pi_j+1$. {\em Fishburn permutations} are  $\pattern$-avoiding permutations.

We adopt the notation $F_n(\sigma_1,\ldots,\sigma_k)$ to represent the set of Fishburn permutations of length $ n $ that simultaneously avoid the patterns $\sigma_1,\ldots,\sigma_k$. For instance, $F_{n}(321,1243)$ represents the set of Fishburn permutations of length $n$ that avoid both the  patterns 321 and 1243.

Bousquet-M\'{e}lou, Claesson, Dukes, and Kitaev~\cite{BousquetMlou200822freePA} gave bijections between Fishburn permutations and {\em ascent sequences}, {\em unlabeled $(2+2)$-free posets}, and {\em linearized chord diagrams}. They also proved that the generating function for Fishburn permutations is \\[-4mm]
\[
1+\sum_{n\geq1}\prod_{i=1}^{n}(1-(1-t)^i).
\]
For the literature related to the Fishburn numbers, see~\cite{Stoimenow1998,Levande2013,Andrews2016,Fu2020}.

Recently, Cerbai and Claesson~\cite{Cerbai2020} developed the theory of the transport of patterns from Fishburn permutations to (modified) ascent sequences. The pattern avoidance on (modified) ascent sequences has been discussed recently by Duncan and Steingr\'{i}msson~\cite{Duncan2011}, and Cerbai~\cite{Cerbai2023,Cerbai2024}. 

In this paper, we are concerned  with enumeration of Fishburn permutations which  avoid specific  classical patterns.
Gil and Weiner~\cite{Gil2021} studied avoidance of classical patterns of size 3 or 4 on all and indecomposable Fishburn permutations. Egge~\cite{Egge2022PatternAvoidingFP} settled one of the conjectures in \cite{Gil2021} and enumerated Fishburn permutations that avoid specific classical pattern sets of size 3 or 4. Moreover, Egge~\cite{Egge2022PatternAvoidingFP}  proposed several conjectures on pattern-avoiding Fishburn permutations.

We prove Conjectures~10.14 and~10.17 in \cite{Egge2022PatternAvoidingFP} that involve a total of 15 enumerative results.
Table~\ref{tab-1} is a summary of the main results in this paper. Notably,
the formulas presented in the last four lines in Table~\ref{tab-1} have not been found or conjectured before, and we use  them to prove Egge's conjectures.

\begin{table}[]
        \renewcommand{\arraystretch}{1.8}
 \begin{center}
	\begin{tabular}{|c|c|c|}
		\hline
		Patterns $\sigma_1,\ldots,\sigma_k$ & |$F_{n}(\sigma_1,\ldots,\sigma_k)$|&Reference\\
		\hline \hline
		321, 1243 & $n^2-3n+4\quad(n\geq2)$&Theorem \ref{1243}\\[5pt]
		\hline
		321, 2134 & $n^2-3n+4\quad(n\geq2)$& Theorem \ref{2134} \\[5pt]
		\hline
		321, 1324 & $\frac{3}{2}n^{2}-\frac{13}{2}n+10\quad(n\geq3)$& Theorem \ref{1324}\\
		\hline  \hline
		321, 1423, 2143 & $\binom{n}{2}+1\quad(n\geq 0) $& Theorem \ref{1423-2143}\\
		\hline
		321, 3142, 2143& $ \binom{n}{2}+1\quad(n\geq 0) $& Theorem \ref{3142-2143}\\
		\hline
	321, 2143, 3124 &$\binom{n}{2}+1\quad(n\geq 0)$& Theorem \ref{2143-3124}\\
		\hline
	321, 2143, 4123 &$ \binom{n}{2}+1\quad(n\geq 0) $& Theorem \ref{2143-4123}\\
	\hline
	321, 1423, 3124&$F_n+2\quad(n\geq4)$& Theorem \ref{1423-3124}\\
	\hline
	321,  1423, 4123& $F_{n+1}-1\quad(n\geq1)$& Theorem \ref{1423-4123}\\
	\hline
	321, 3124, 4123& $F_{n+1}-1\quad(n\geq1)$& Theorem \ref{3124-4123}\\
	\hline  \hline
	321, 14253& $2^{n}-\binom{n}{2}-1\quad(n\geq1)$& Theorem \ref{321-14253}\\
	\hline
	321, 21354& $2^{n}-\binom{n}{2}-1\quad(n\geq1)$& Theorem \ref{321-21354}\\
	\hline
	321, 31452& $\frac{P_n+P_{n-1}+1}{2}\quad (n\geq1)$& Theorem \ref{321-31452}\\
	\hline
	321, 31524& $\frac{P_n+P_{n-1}+1}{2}\quad (n\geq1)$& Theorem \ref{31524}\\
	\hline
	321, 41523& $\frac{P_n+P_{n-1}+1}{2}\quad (n\geq1)$& Theorem \ref{321-41523}\\
	\hline \hline
	321, 132& $n\quad(n\geq1)$& Lemma~\ref{lem:321-132}\\
	\hline
    321, 213 & $n\quad(n\geq1)$& Lemma~\ref{lem:321-213}\\ \hline
    321, 312 & $F_{n}\quad(n\geq1)$& Lemma~\ref{lem:321-312}\\ \hline
    321, 3142& $2^{n-1}\quad (n\geq1)$& Lemma \ref{lem3142}\\
    \hline
	\end{tabular}
 	\caption{Enumeration of $\sigma$-avoiding Fishburn permutations}
\label{tab-1}
\end{center}
\end{table}

The following lemma shows that all the enumeration results discussed in this paper can be categorized into two types depending on the position of  the smallest element 1.

\begin{lem}\label{lem-12}
    Let $\pi\in F_n(321)$. Then either $\pi_1=1$ or $\pi_2=1$.
\end{lem}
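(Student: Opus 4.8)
The plan is to prove the contrapositive: I would assume that $\pi_1 \neq 1$ and $\pi_2 \neq 1$, and derive that $\pi$ must contain either a copy of the pattern $321$ or a copy of the Fishburn pattern $\pattern$, contradicting $\pi \in F_n(321)$. Let $k$ be the position of the smallest element, so $\pi_k = 1$ with $k \geq 3$ by assumption. The key observation is that the two elements $\pi_1$ and $\pi_2$ sitting to the left of position $k$ both exceed $\pi_k = 1$, so I would examine their relative order to force a forbidden configuration.

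First I would dispose of the case $\pi_1 > \pi_2$: then $\pi_1 \pi_2 \pi_k$ is a subsequence whose values satisfy $\pi_1 > \pi_2 > 1 = \pi_k$, giving a copy of the classical pattern $321$, which is excluded. Hence I may assume $\pi_1 < \pi_2$. Now I want to produce a Fishburn occurrence, i.e. consecutive entries $\pi_i \pi_{i+1}$ together with a later entry $\pi_j$ forming a $231$ pattern with $\pi_i = \pi_j + 1$. The natural candidate is to look just before position $k$: consider the entry $\pi_{k-1}$. Since everything to the left of $\pi_k$ is larger than $1$, and $\pi_k = 1$ is the global minimum, the pair $\pi_{k-1}\pi_k$ is a descent with $\pi_{k-1} > \pi_k = 1$.

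The main idea is then to locate the value $\pi_{k-1} - 1$ (which is at least $1$, and if $\pi_{k-1} = 2$ it equals $\pi_k$, handled separately). I would argue that because $1$ appears at position $k$ with $k \geq 3$ and $\pi_1 < \pi_2$, the value immediately below some $\pi_{k-1}$ must appear to the \emph{right} of position $k$, and then $\pi_{k-1}\,\pi_k\,(\pi_{k-1}-1)$ together with the requirement $\pi_i = \pi_j + 1$ is engineered to realise the Fishburn pattern on consecutive positions $i = k-1$, $i+1 = k$; one checks the pattern of the three values $\pi_{k-1}, 1, \pi_{k-1}-1$ is indeed $231$. I would formalise this by tracking where the values $1, 2, \ldots, \pi_{k-1}$ are located and using that they cannot all fit to the left in increasing order without either creating a $321$ or the bivincular adjacency condition $\pi_i = \pi_j+1$.

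The hard part will be handling the boundary and the bookkeeping of which specific value plays the role of $\pi_j$ in the Fishburn occurrence: the Fishburn pattern is restrictive because it demands both that $\pi_i\pi_{i+1}$ are \emph{adjacent} and that $\pi_i = \pi_j + 1$, so a generic $231$ or $321$ does not immediately suffice and I must pin down the exact entries. I expect the cleanest route is a short case analysis on the value $\pi_2$ relative to $\pi_1$ and on whether $\pi_{k-1}-1$ lies before or after position $k$, each case terminating in one of the two forbidden patterns; since $k \geq 3$ guarantees at least two entries to the left of the minimum, there is always enough room to exhibit the contradiction.
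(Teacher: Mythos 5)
Your first case is exactly the paper's: if $\pi_1 > \pi_2$, then $\pi_1,\pi_2,1$ form a copy of $321$. But your second case contains a genuine gap: the occurrence you try to build, $\pi_{k-1}\,\pi_k\,(\pi_{k-1}-1)$ with $\pi_k=1$, can never be a copy of the Fishburn pattern. That pattern requires the two \emph{adjacent} entries $\pi_i\pi_{i+1}$ to begin a copy of $231$, i.e.\ $\pi_j < \pi_i < \pi_{i+1}$, so in particular $\pi_i\pi_{i+1}$ must be an \emph{ascent}; the pair $\pi_{k-1}\pi_k$ you chose is a descent (you note this yourself), and the three values $\pi_{k-1},\,1,\,\pi_{k-1}-1$ form the pattern $312$, not $231$. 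Moreover, your supporting claim that $\pi_{k-1}-1$ must lie to the right of position $k$ is unjustified and false in general: in $2\,3\,1$ the value below $\pi_{k-1}=3$ sits at position $1$. So the case $\pi_1 < \pi_2$ does not close as written.

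The repair is to look at the front of the permutation rather than at the position of $1$: since $1$ is not among the first two entries, $\pi_1 \geq 2$, so the value $\pi_1 - 1 \geq 1$ exists, and it equals neither $\pi_1$ nor $\pi_2$ (because $\pi_1 - 1 < \pi_1 < \pi_2$), hence it occupies some position $j \geq 3$. Then $\pi_1\,\pi_2\,\pi_j$ has values satisfying $\pi_j = \pi_1 - 1 < \pi_1 < \pi_2$, i.e.\ it is a copy of $231$ occupying the adjacent positions $1,2$ together with $j$, and $\pi_1 = \pi_j + 1$, which is precisely a copy of the Fishburn pattern. This is the paper's argument; it requires no tracking of $\pi_{k-1}$ or of where $1$ sits beyond the assumption $k \geq 3$, and it closes both cases in two lines.
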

\begin{proof}
    We shall prove this lemma by contradiction. Suppose that  the element 1 does not occupy one of the first two positions.  Let $\pi_1=x$ and $\pi_2=y$. If $x>y$ then $x,y$, and $1$ form a copy of the pattern 321. If $x<y$ then $x-1$ must appear to the right of $y$. This implies that the elements $x,y$, and $x-1$ form a copy of the Fishburn pattern  $\pattern$. Both cases lead to contradictions. Thus, we can conclude that $\pi_1=1$ or $\pi_2=1$.
\end{proof}

Since all cases of interest to us involve 321-avoiding Fishburn permutations, we classify  the proofs of the enumeration results into two types based on Lemma \ref{lem-12} and deal with them separately.
Throughout the paper we shall denote the set $\{\pi\in F_n(321, \sigma_1,\ldots,\sigma_k):\pi_i=1\}$ by $F_n^{(i)}(321, \sigma_1,\ldots,\sigma_k)$ for $i\in\{1,2\}$.

The following lemma shows that some pattern-avoiding Fishburn permutations can be deduced from classical results. We shall use it to prove Lemmas~\ref{lem:321-132}, \ref{lem:321-213}, \ref{lem:321-312}, \ref{lem3142}, and Theorem~\ref{3142-2143}.
\begin{lem}\label{lem:classical_pattern}
	For each pattern $\sigma\in\{132,213,312,3142\}$, we have $$F_n(321,\sigma)=S_n(231,321,\sigma).$$
\end{lem}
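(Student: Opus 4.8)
The plan is to prove the two inclusions separately and to reduce the harder one to a single structural fact about $321$-avoiding Fishburn permutations. For the inclusion $S_n(231,321,\sigma)\subseteq F_n(321,\sigma)$, I would observe that every occurrence of the Fishburn pattern contains, in particular, an occurrence of the classical pattern $231$; hence a $231$-avoiding permutation is automatically a Fishburn permutation. Since a member of $S_n(231,321,\sigma)$ already avoids $321$ and $\sigma$ and is Fishburn, this inclusion is immediate.

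For the reverse inclusion $F_n(321,\sigma)\subseteq S_n(231,321,\sigma)$, it suffices to show that any $\pi\in F_n(321,\sigma)$ also avoids $231$. Since each $\sigma\in\{132,213,312,3142\}$ occurs as a classical pattern inside $3142$ (the subsequences $1\,4\,2$, $3\,1\,4$, $3\,1\,2$ of $3142$ realize $132$, $213$, $312$, and $3142$ contains itself), it is enough to establish the following key claim: if $\pi$ avoids $321$ and is a Fishburn permutation but contains $231$, then $\pi$ contains $3142$. Granting this, a $\pi\in F_n(321,\sigma)$ containing $231$ would contain $3142$ and hence $\sigma$, contradicting $\sigma$-avoidance; so $\pi$ avoids $231$ and lies in $S_n(231,321,\sigma)$.

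To prove the key claim I would argue by contradiction, assuming $\pi$ avoids $321$ and $3142$, is Fishburn, yet contains $231$. Among all occurrences of $231$, I would select one $\pi_p\pi_q\pi_r$ with $\pi_r<\pi_p<\pi_q$ whose ``$2$''-value $\pi_p$ is as small as possible and, subject to that, whose gap $q-p$ is minimal. The first step is to force $q=p+1$: an element below $\pi_r$ strictly between positions $p$ and $q$ would create a $3142$ (the squeezed small element), an element above $\pi_p$ there would give a $231$ with the same ``$2$''-value but smaller gap, and an element with value strictly between $\pi_r$ and $\pi_p$ there would give a $231$ with a strictly smaller ``$2$''-value; all three contradict the extremal choices, so no position lies strictly between $p$ and $q$.

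The endgame then examines the value $\pi_p-1$. If $\pi_p=\pi_r+1$, the consecutive occurrence $\pi_p\pi_{p+1}\pi_r$ is itself a copy of the Fishburn pattern, contradicting that $\pi$ is Fishburn. Otherwise $\pi_p\ge\pi_r+2$, so the value $\pi_p-1$ exists and exceeds $\pi_r$; if it lies to the right of position $p+1$ it produces a consecutive $231$ with ``$2$''-value $\pi_p$ and ``$1$''-value $\pi_p-1$, again a copy of the Fishburn pattern, whereas if it lies to the left of position $p$ it yields a $231$ of ``$2$''-value $\pi_p-1<\pi_p$, contradicting minimality. Each branch is a contradiction, which proves the claim. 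The main obstacle is precisely this key claim, and within it the delicate point is choosing the extremal $231$-occurrence so that $321$-avoidance, $3142$-avoidance, and the minimality of the ``$2$''-value interact to pin the occurrence down to consecutive positions and then to a genuine Fishburn pattern.
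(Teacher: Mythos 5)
Your proposal is correct, but it takes a genuinely different route from the paper's. The paper also proves the reverse inclusion by contrapositive via an extremal occurrence of $231$, but it makes a different extremal choice: it takes a copy $abc$ of $231$ whose ``2''-value $a$ is smallest and whose ``1''-value $c$ is largest, uses $321$-avoidance to empty three regions of the permutation diagram, uses the extremality of $a$ and $c$ to empty two more, concludes $a=c+1$ directly, and only then invokes $\sigma$-avoidance---separately for each of the four patterns---to empty the last region below $c$ (an element $x$ there would create $xbc$, $axb$, $axc$, or $axbc$, a copy of $132$, $213$, $312$, or $3142$ respectively); the Fishburn pattern is then exhibited by $a$, the element immediately to its right, and $c$. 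You instead unify the four cases at the outset by observing that $3142$ contains each $\sigma$, so that only $3142$-avoidance ever needs to be contradicted, and your extremal choice (minimal ``2''-value, then minimal gap $q-p$) together with the case analysis on the location of the value $\pi_p-1$ closes the argument without ever using $321$-avoidance. This buys you a strictly stronger statement: any Fishburn permutation containing $231$ contains $3142$, hence $F_n(\sigma)=S_n(231,\sigma)$ for these four $\sigma$ even without imposing $321$, from which the lemma follows by intersecting with $321$-avoiders. What the paper's version buys in exchange is a single transparent picture in which the role of each of the four patterns is visible at a glance, and in which the conclusion $a=c+1$ is read off directly from empty regions rather than split into the two cases $\pi_p=\pi_r+1$ and $\pi_p\geq\pi_r+2$ as in your endgame. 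Both arguments are sound; yours is more economical in hypotheses, the paper's more geometric.
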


\begin{proof}

The inclusion $S_n(231,321,\sigma)\subseteq F_n(321,\sigma)$ is trivial since the $\pattern$-pattern is contained in the 231-pattern. To prove the opposite inclusion, it suffices to show that if a permutation $\pi\in F_n(321,\sigma)$ avoids $\pattern$ then $\pi$ avoids 231. We consider its contrapositive form. Let $a,b,c$ be a copy of 231, as illustrated in the picture below, satisfying that $a$  and $c$ are the smallest and largest elements, respectively.
\begin{figure}[htpb]
	\centering
	\begin{tikzpicture}[scale=0.8]
		\draw (0.01,0.01) grid (3+0.99,3+0.99);
		\fill (1,2) circle (2pt);
		\fill (2,3) circle (2pt);
		\fill (3,1) circle (2pt);
		\node[below right=2pt] at (0.4,2.6) {$a$};
		\node[below left=2pt] at (2.6,3.6) {$b$};
		\node[above left=2pt] at (3.6,0.4) {$c$};
		\node at (1.5,3.5) {$A$};
		\node at (1.5,2.5) {$B$};
		\node at (1.5,1.5) {$C$};
		\node at (1.5,0.5) {$D$};
		\node at (0.5,1.5) {$E$};
		\node at (2.5,1.5) {$F$};
		\node at (3.5,1.5) {$G$};
	\end{tikzpicture}
\end{figure}

	To avoid 321, there are no elements in regions $A,C,F$. Moreover, there are no elements in regions $E$ and $G$ due to the selection of $a$ and $c$. Thus, we have $a=c+1$. Next, we prove that there are no elements in region $D$. If an element $x$ is in region $D$, then the elements $xbc, axb, axc, axbc$ would form a copy of pattern 132, 213, 312 and 3142, respectively. So we can find a copy of $\pattern$ consisting of $a$, the element immediately to its right (namely the first element in region $B$ or $b$ when $B$ is empty) and $c$.
\end{proof}

Recall that the \emph{direct sum} of two permutations $\sigma$ and $\tau$, of lengths $k$ and $\ell$ respectively, denoted by $\sigma\oplus\tau$, is a permutation of length  $k+\ell$ consisting of $\sigma$ followed by $\tau^{\prime}$, where $\tau'$ is obtained from $\tau$ by adding $k$ to each element. For example, $12\oplus123=12345$. Additionally, an \emph{active site} in a permutation $\pi\in F_{n-1}(\sigma)$ is the space to the left or to the right of $\pi$ or between two consecutive elements in $\pi$ such that the permutation obtained by inserting $n$ into this space belongs to $F_{n}(\sigma)$.

The following observation is very useful for us.
Suppose $\sigma$ is a pattern (or a set of patterns) which does not begin with 1.
One can easily see that $|F_n^{(1)}(321, \sigma)| = |F_{n-1}(321,\sigma)|$.
If $\sigma$ begins with 1 then we can make use of the fact (which can also be proved directly) that $|F_n^{(1)}(321, 1 \oplus \tau)| = |F_{n-1}(321,\tau)|$.

This paper is organized as follows. In Section~\ref{sec2}, we prove three enumerative results, Theorems \ref{1243},  \ref{2134}, and \ref{1324}, on Fishburn permutations avoiding the pattern 321 and  a classical pattern of size 4.
In Section~\ref{sec3}, we consider the enumeration of Fishburn permutations that avoid the pattern 321 and two classical patterns of size 4.
Theorems \ref{1423-3124},  \ref{1423-4123}, and \ref{3124-4123} involve the Fibonacci numbers.
In Section~\ref{sec4}, we deal with pattern avoidance of size 5 and our results in Theorems~\ref{321-31452}, \ref{31524}, and  \ref{321-41523} are related to the Pell numbers.

\section{Avoiding 321 and a classical pattern of size 4}
\label{sec2}

\subsection{Enumerating $ F_{n}(321,1243)$}

The main result in this subsection is the following theorem.
\begin{thm}\label{1243}
	For $n\geq 2$,  $|F_{n}(321,1243)|=n^{2}-3n+4$.
\end{thm}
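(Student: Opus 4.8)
The plan is to count $F_n(321,1243)$ by splitting according to Lemma~\ref{lem-12} into the two cases $\pi_1=1$ and $\pi_2=1$, using the notation $F_n^{(i)}(321,1243)$ introduced in the excerpt, so that $|F_n(321,1243)| = |F_n^{(1)}(321,1243)| + |F_n^{(2)}(321,1243)|$. For the first piece, I would invoke the observation stated just before Section~\ref{sec2}: since $1243 = 1 \oplus 132$, we have $|F_n^{(1)}(321,1243)| = |F_{n-1}(321,132)|$, and Lemma~\ref{lem:321-132} gives $|F_{n-1}(321,132)| = n-1$ for $n-1 \geq 1$. So the first case contributes a linear term, and all the quadratic growth must come from the case $\pi_2=1$.

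The main work is therefore to count $F_n^{(2)}(321,1243)$, the permutations $\pi$ with $\pi_2=1$ that are Fishburn and avoid $321$ and $1243$. First I would write $\pi = \pi_1\,1\,\pi_3\cdots\pi_n$ and analyze the structure forced by the constraints. Since $\pi_2=1$ and there is no $321$, the prefix condition from Lemma~\ref{lem-12} and the Fishburn condition pin down $\pi_1$ tightly: because $\pi_1$ sits to the left of the $1$ and $\pi_1 \neq 1$, avoiding the Fishburn pattern $\pattern$ forces $\pi_1 - 1$ to not appear badly to the right, which in a $321$-avoider essentially constrains $\pi_1$ to be small (I expect $\pi_1=2$, or a short list of small values). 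Then, removing the leading $\pi_1$ and the $1$, the remaining word on the values $\{2,\dots,n\}\setminus\{\pi_1\}$ is itself a $321$- and $1243$-avoiding Fishburn-type permutation, but now with extra constraints inherited from the deleted prefix. The goal is to show that the admissible shapes are parametrized by a small number of discrete choices (the position where the ``descent block'' ends and where the large elements are placed), each choice giving $O(n)$ configurations, and that summing these yields the quadratic count.

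The most efficient route is probably to identify the precise combinatorial skeleton of a permutation in $F_n^{(2)}(321,1243)$. Avoiding $321$ means $\pi$ is a union of two increasing subsequences, and $\pi_2=1$ together with Fishburn-avoidance should force the elements to split into an initial increasing run and a final increasing run whose interleaving is heavily restricted; the additional avoidance of $1243 = 1\oplus 132$ then rules out a $132$ pattern occurring after any ``$1$'', which limits how the large values can be arranged after the increasing prefix. I would make a grid/region argument, as in the proof of Lemma~\ref{lem:classical_pattern}, placing the key elements $\pi_1$, the $1$, the maximum $n$, and the value $\pi_1-1$, and reading off which regions must be empty. The outcome should be a clean parametrization showing $|F_n^{(2)}(321,1243)| = n^2 - 4n + c$ for an explicit constant, so that adding the linear first-case count $n-1$ recovers $n^2-3n+4$; I would finish by checking the formula against small values ($n=2,3,4$) computed by hand to fix and verify the constant.

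The step I expect to be the genuine obstacle is the structural classification of $F_n^{(2)}(321,1243)$: establishing exactly which positions of the large elements are compatible with simultaneously avoiding $321$, $1243$, and the Fishburn pattern is where the casework concentrates, and it is easy to double-count or miss a degenerate configuration. Keeping the region-emptiness bookkeeping disciplined (in the style of the figure in Lemma~\ref{lem:classical_pattern}) and cross-checking the final polynomial against explicit small cases is how I would guard against errors.
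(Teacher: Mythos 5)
Your overall frame (split via Lemma~\ref{lem-12}, handle $\pi_1=1$ through $1243=1\oplus 132$ and Lemma~\ref{lem:321-132}, concentrate the work on $\pi_2=1$) is exactly the paper's, but the structural claim on which your count of $F_n^{(2)}(321,1243)$ rests is false. You assert that, when $\pi_2=1$, avoidance of the Fishburn pattern $\pattern$ ``essentially constrains $\pi_1$ to be small (I expect $\pi_1=2$, or a short list of small values).'' It does not: since $\pi_1>\pi_2=1$, the first two entries form a \emph{descent}, so they can never serve as the ascent of the Fishburn pattern, and no such constraint on $\pi_1$ arises. Indeed, $k\ 1\ 2\cdots (k-1)\ (k+1)\cdots n$ lies in $F_n^{(2)}(321,1243)$ for every $2\le k\le n$. (The argument you are importing is the one in the proof of Lemma~\ref{lem-12}, and it applies only when $1$ is \emph{not} among the first two entries, i.e.\ when $\pi_1<\pi_2$; it says nothing once $\pi_2=1$.) This is not a repairable slip but the collapse of your plan: the paper's Proposition~\ref{1243-2} shows that for each $3\le k\le n-1$ there are exactly $2(n-k)$ permutations in $F_n^{(2)}(321,1243)$ with $\pi_1=k$ (one with $\pi_3=2$, then $n-k$ with $\pi_3=k+1$, and $n-k-1$ with $\pi_3\ge k+2$), and the quadratic value
\[
|F_n^{(2)}(321,1243)|=1+\sum_{k=3}^{n-1}2(n-k)+(n-2)=n^2-4n+5
\]
comes precisely from summing over \emph{all} these values of $k$. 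If $\pi_1$ were confined to $O(1)$ small values, each contributing (as in your deletion-and-recurse picture, or as in the paper's $k=2$ case, which yields $|F_{n-2}(321,132)|=n-2$) only $O(n)$ permutations, the total would be linear in $n$, contradicting the very formula you are trying to prove.

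Beyond this, the remainder of the proposal is a program rather than a proof: the decisive step is deferred (``the outcome should be a clean parametrization $\ldots$ for an explicit constant,'' to be fixed afterwards by small cases), and the step you yourself flag as the genuine obstacle --- the classification of $F_n^{(2)}(321,1243)$ --- is exactly the one your false expectation about $\pi_1$ would derail. A correct classification must keep $\pi_1=k$ as a free parameter ranging over $2\le k\le n$ and then analyze the possibilities for $\pi_3$ (namely $\pi_3=2$, $\pi_3=k+1$, or $\pi_3\ge k+2$), which is how the paper extracts the count $2(n-k)$ per $k$ and hence the quadratic total.
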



Before proving Theorem \ref{1243}, we shall first enumerate $F_n(321,132)$.
\begin{lem}\label{lem:321-132}
    For $n\geq 1$, we have
    \[
    |F_n(321,132)|=n.
    \]
\end{lem}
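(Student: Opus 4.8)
The plan is to invoke Lemma~\ref{lem:classical_pattern} with $\sigma=132$, which gives $F_n(321,132)=S_n(231,321,132)$, thereby reducing the problem to counting classical permutations that simultaneously avoid $231$, $321$, and $132$. Writing $a_n:=|S_n(231,321,132)|$, I will show $a_n=n$.

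First I would analyze the joint avoidance of $132$ and $231$. Both of these patterns place their largest entry in the middle position, so $\pi$ contains a copy of $132$ or of $231$ precisely when some entry $\pi_j$ with $1<j<n$ has a smaller entry both to its left and to its right. Consequently, in any $\pi\in S_n(231,321,132)$ the maximal entry $n$ must occupy position $1$ or position $n$; were it in any interior position it would immediately witness such a forbidden configuration.

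This dichotomy yields a clean recursion. If $\pi_n=n$, then $\pi_1\cdots\pi_{n-1}$ is an arbitrary element of $S_{n-1}(231,321,132)$, contributing $a_{n-1}$ permutations. If $\pi_1=n$, then since $n$ precedes every other entry, avoiding $321$ forces $\pi_2\cdots\pi_n$ to be increasing, so that $\pi=n\,1\,2\,\cdots\,(n-1)$; this is the unique permutation in this case, and one checks directly that it avoids all three patterns. Combining the two cases gives $a_n=a_{n-1}+1$ with base case $a_1=1$, whence $a_n=n$, as desired.

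The argument is short, so there is no serious obstacle; the only points requiring care are the reduction of joint $\{132,231\}$-avoidance to the ``maximum at an end'' condition and the verification that the $321$ constraint collapses the $\pi_1=n$ branch to a single permutation. As an equivalent and perhaps more transparent route, I could instead exhibit the explicit characterization $S_n(231,321,132)=\{\,k\,1\,2\,\cdots\,(k-1)\,(k+1)\,\cdots\,n : 1\le k\le n\,\}$ and simply count these $n$ permutations.
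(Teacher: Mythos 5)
Your proof is correct, and its first step---invoking Lemma~\ref{lem:classical_pattern} with $\sigma=132$ to identify $F_n(321,132)$ with $S_n(231,321,132)$---is exactly what the paper does. The difference is in the second step: the paper simply cites Simion and Schmidt \cite[Lemma 6 (b)]{Simion1985} for the fact that $|S_n(231,321,132)|=n$, whereas you prove this count from scratch. Your argument is sound: since both $132$ and $231$ place their largest entry in an interior position of the pattern, any interior placement of $n$ with entries on both sides produces one of the two (according to whether the left entry is smaller or larger than the right one), so $n$ must sit at an end; the case $\pi_n=n$ reduces cleanly to length $n-1$ because an entry $n$ in final position can participate in no copy of $321$, $231$, or $132$; and the case $\pi_1=n$ collapses under $321$-avoidance to the single permutation $n\,1\,2\cdots(n-1)$, which indeed avoids all three patterns. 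The recursion $a_n=a_{n-1}+1$ with $a_1=1$ then gives $a_n=n$ (note the two cases coincide only when $n=1$, which your base case covers). Your closing explicit characterization $S_n(231,321,132)=\{\,k\,1\,2\cdots(k-1)\,(k+1)\cdots n : 1\le k\le n\,\}$ is also correct and follows from the same recursion by induction. What the paper's route buys is brevity; what yours buys is self-containedness, since the reader need not consult \cite{Simion1985}, at the cost of a few extra lines.
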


\begin{proof}
	By Lemma~\ref{lem:classical_pattern}, the Fishburn permutations avoiding patterns 321 and 132 are in fact permutations avoiding patterns 231, 321 and 132. Simion and Schmidt have proved in \cite[Lemma 6 (b)]{Simion1985} that $|S_n(231,321,132)|=n$. Thus we finish the proof.
\end{proof}

\begin{prop}\label{1243-2}
	For $n\geq 2$, $|F_{n}^{(2)}(321,1243)|=n^{2}-4n+5$.
\end{prop}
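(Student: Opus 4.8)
The plan is to fix $\pi\in F_n^{(2)}(321,1243)$, write $\pi_1=x$ (so $x\ge 2$ since $\pi_2=1$), and count according to the value of $x$. I call the entries $2,3,\ldots,x-1$ \emph{small} and $x+1,\ldots,n$ \emph{large}. Throughout I use that $321$-avoidance forces the small entries to occur in increasing order (two decreasing small entries together with $x$ would be a $321$), and I repeatedly exploit that the fixed entry $1$ in position $2$ lies to the left of everything else.

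First I would dispose of the case $x=2$, which is exactly why Lemma~\ref{lem:321-132} was proved beforehand. Here $\pi=21\sigma$ where $\sigma$ is a word on $\{3,\ldots,n\}$. A short check shows that $\pi$ avoids $321$ iff $\sigma$ does; that $\pi$ avoids $\pattern$ iff $\sigma$ does (the prefix $21$ cannot begin a copy of $\pattern$ since $2>1$, and the entry $1$ cannot be the small entry of such a copy); and that $\pi$ avoids $1243$ iff $\sigma$ avoids $132$ (prepending the entry $1$ turns a copy of $132$ into a copy of $1243$, and conversely every copy of $1243$ in $\pi$ restricts to a copy of $132$ in $\sigma$). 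Hence the number of such $\pi$ equals $|F_{n-2}(321,132)|=n-2$ by Lemma~\ref{lem:321-132}.

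The substantial case is $x\ge 3$, where at least one small entry is present. The key structural claim is that $\pi$ decomposes as $\pi=x\,1\,P\,B\,Q$, where $B=2\,3\cdots(x-1)$ is the increasing block of all small entries and $P,Q$ are the large entries lying before and after $B$. I would establish this in stages: (1) the small entries form a contiguous block, since a large entry $\ell$ wedged between two smalls $s<s'$ gives the copy $1\,s\,\ell\,s'$ of $1243$; (2) $P$ is increasing, since a descent in $P$ followed by any small entry is a $321$, and $Q$ is increasing, since a descent $\ell_1>\ell_2$ in $Q$ yields the copy $1\,s\,\ell_1\,\ell_2$ of $1243$ for any small $s$; (3) avoiding $1243$ further forces $P$ to be an interval of consecutive integers, since a gap would place some $m\in Q$ strictly between two entries $\ell<\ell'$ of $P$, giving the copy $1\,\ell\,\ell'\,m$; and (4) the genuinely bivincular constraint: writing $P=\{a,a+1,\ldots,b\}$, the adjacent ascent $a,(a+1)$ together with $a-1$ forms a copy of $\pattern$ precisely when $a-1\in Q$, i.e. when $|P|\ge 2$ and $a\ge x+2$, so $\pattern$-avoidance forbids exactly this. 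I would then check that conditions (1)--(4) are also sufficient, the main point being that no other adjacent ascent (inside $B$, inside $Q$, or across a boundary) can complete a copy of $\pattern$, because the required entry one smaller always lies to the left. I expect step (4) to be the main obstacle: it is where the adjacency and the $\pi_i=\pi_j+1$ conditions of the Fishburn pattern must be tracked precisely, and it is easy to overlook that, for example, $P=\{x+2,x+3\}$, $Q=\{x+1\}$ is forbidden even though it avoids $321$ and $1243$.

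Finally I would count. The admissible sets $P\subseteq\{x+1,\ldots,n\}$ are: the empty set; any singleton ($n-x$ of them); and any interval $\{x+1,\ldots,b\}$ of size at least $2$ ($n-x-1$ of them). For $x\le n-1$ this gives $2(n-x)$ permutations, while $x=n$ (no large entries) gives the single permutation $n\,1\,2\cdots(n-1)$. Summing over the first entry,
\[
|F_n^{(2)}(321,1243)|=(n-2)+\sum_{x=3}^{n-1}2(n-x)+1=(n-2)+(n-3)(n-2)+1=(n-2)^2+1,
\]
which equals $n^2-4n+5$, as claimed. A routine verification of the boundary values $n=2,3$ completes the argument.
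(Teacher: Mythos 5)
Your proposal is correct and follows essentially the same route as the paper: both split on the first entry $\pi_1=x$, reduce the case $x=2$ to $|F_{n-2}(321,132)|=n-2$ via Lemma~\ref{lem:321-132}, and for $3\le x\le n-1$ identify exactly the same family of $2(n-x)$ permutations (your admissible prefixes $P$ --- empty, singleton, or an interval starting at $x+1$ --- coincide with the paper's three subcases $\pi_3=2$, $\pi_3=k+1$, $\pi_3\ge k+2$). The only difference is organizational: you package the middle case as a structure theorem $\pi=x\,1\,P\,B\,Q$ with conditions (1)--(4), while the paper runs the same arguments as a case analysis on $\pi_3$.
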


\begin{proof}
	For $n=2$, $F_2^{(2)}(321,1243)=\{21\}$. Thus $|F_{2}^{(2)}(321,1243)|=1=2^{2}-4\times 2+5$.

	For $n=3$, $F_3^{(2)}(321,1243)=\{213,312\}$. Thus $|F_{3}^{(2)}(321,1243)|=2=3^{2}-4\times 3+5$.

	For $n=4$, $F_4^{(2)}(321,1243)=\{2134,2143,3124,3142,4123\}$. Thus $$|F_{4}^{(2)}(321,1243)|=5=4^{2}-4\times 4+5.$$

	For $n\geq 5$, let $\pi_1=k$. We  consider the following cases of possible values of $k$.
\begin{enumerate}[1)]
\item$k=n$. To avoid 321,  the elements $2,\ldots,n-1$ appearing after $n$ must be in increasing order. So $\pi=n\ 1\ 2\cdots n-1$.


\item $3\leq k\leq n-1$. Suppose $\pi=k\ 1\ \pi_{3}\pi_{4}\cdots \pi_{n}$. We consider the value of~$\pi_3$.
\begin{enumerate}[a)]
\item $\pi_3<k$. In this case, $\pi_{3}=2$, because otherwise $k,\pi_{3},2$ would be a copy of 321. Additionally, to avoid 1243, the elements $3,\ldots,k-1,k+1,\ldots,n$ to the right of 1 and 2 must be in increasing order. Therefore, $$\pi=k\ 1\ 2\ 3 \cdots k-1\ k+1\cdots n.$$

\item $\pi_{3}=k+1$. To avoid 321, the elements $2,\ldots,k-1$ are in increasing order. Meanwhile, the elements $k+2,\ldots,n$ are in increasing order. If not, $1,k+1$ and a descending pair would be a copy of 1243. In addition, no elements of $\{k+2,\ldots,n\}$ can be inserted into the site between $2$ and $k-1$. Indeed, if some $x\in \{k+2,\ldots,n\}$  is inserted into the sites between $2$ and $k-1$, say $x$ is inserted into the site between $i$ and $i+1$  ($2\leq i\leq k-2$), then $1,i,x,i+1$ form a copy of 1243, which is impossible.  It remains to consider $2\cdots k-1$ as a whole part and insert it into
$$k\ 1\ k+1\ ^{1}\ k+2\ ^{2}\ k+3\cdots \, ^{n-k-1} n\, ^{n-k}.$$ All the  $n-k$ sites labeled above are  available to insert $2\cdots k-1$.
So in this case, the number of permutations is $n-k$.

\item $\pi_{3}\geq k+2$. We claim that $\pi_{4}=2$. If $2<\pi_{4}<\pi_{3}$ then $\pi_{3},\pi_{4},2$ form a copy of 321. Meanwhile, if $\pi_{4}>\pi_{3}$ then $\pi_{3},\pi_{4},\pi_{3}-1$ form a copy of  $\pattern$. Similar to the argument in the previous  paragraph, $2,\ldots,k-1$ are in increasing order and, to avoid 1243, no elements in $\{k+1,\ldots,\pi_{3}-1,\pi_{3}+1,\ldots,n\}$ can be inserted into the sites between $2$ and $k-1$. Thus  $k+1,\ldots,\pi_{3}-1,\pi_{3}+1,\ldots,n$ are in increasing order to the right of $k-1$. So $$\pi=k\ 1\ \pi_{3}\ 2\cdots k-1\ k+1\cdots \pi_{3}-1\ \pi_{3}+1\cdots n.$$ As $\pi_3$ ranges from $k+2$ to $n$, the number of permutations in  this subcase is $n-k-1$.
\end{enumerate}
For a fixed $k$, the total number of permutations given by the above three forms is $$1+(n-k)+(n-k-1)=2(n-k).$$
\item $k=2$.
We can consider 2 and 1 as a single element playing the role of 1.
Thus, the number of permutations is $|F_{n-2}(321,132)|=n-2$.
\end{enumerate}
	Therefore, for $n\geq 5$,
	\[|F_{n}^{(2)}(321,1243)|=1+\sum_{k=3}^{n-1} 2(n-k)+(n-2) =n^{2}-4n+5.\]
The proposition is thus proved.
\end{proof}

\begin{proof}[Proof of Theorem \ref{1243}]
For $n\geq 2$, we obtain
\[|F_{n}^{(1)}(321,1243)| = |F_{n-1}(321,132)|,\]
and thus, by Lemma~\ref{lem:321-132} and Proposition~\ref{1243-2},
	\[
	\begin{aligned}
		|F_{n}(321,1243)|=&|F_{n}^{(1)}(321,1243)|+|F_{n}^{(2)}(321,1243)|
		\\=&(n-1)+(n^2-4n+5)\\
		=&n^{2}-3n+4.
	\end{aligned}
	\]

\vspace{-1cm}

\ \ \
\end{proof}

\subsection{Enumerating $F_{n}(321,2134)$}
The main result in this subsection is as follows.
\begin{thm}\label{2134}
	For $n\geq 2$,  $|F_{n}(321,2134)|=n^{2}-3n+4$.
\end{thm}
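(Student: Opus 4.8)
The plan is to mirror the structure used for Theorem~\ref{1243} exactly, since the target formula $n^2-3n+4$ is identical and the splitting via Lemma~\ref{lem-12} is pattern-agnostic. I would first decompose $|F_n(321,2134)|$ into the two disjoint pieces $|F_n^{(1)}(321,2134)|$ and $|F_n^{(2)}(321,2134)|$, according to whether $\pi_1=1$ or $\pi_2=1$. For the $\pi_1=1$ case, note that $2134 = 1 \oplus 123$, so the observation recorded just before Section~\ref{sec2} applies directly: $|F_n^{(1)}(321,2134)| = |F_{n-1}(321,123)|$. I would then need a lemma computing $|F_{n-1}(321,123)|$; avoiding both $321$ and $123$ forces the permutation to have no monotone subsequence of length~$3$, so by Erd\H{o}s--Szekeres such permutations exist only for small $n$ and are few in number, giving a constant (in fact one expects a linear count analogous to the role played by Lemma~\ref{lem:321-132} in the $1243$ case). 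Establishing the exact small-case count here is a routine auxiliary lemma.

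The substantive work is the analogue of Proposition~\ref{1243-2}: computing $|F_n^{(2)}(321,2134)|$ for the case $\pi_2=1$, which I expect to equal $n^2-4n+5$ so that the two pieces sum to $(n-1)+(n^2-4n+5)=n^2-3n+4$, matching Theorem~\ref{1243}'s bookkeeping. Here I would set $\pi_1=k$ and condition on the value of $k$, exactly as in Proposition~\ref{1243-2}. The global constraints are the same skeleton: $321$-avoidance forces the elements smaller than $k$ (other than $1$) to appear in increasing order and restricts descents, while $2134$-avoidance is the new ingredient replacing $1243$-avoidance. Since $2134$ begins with its second-smallest element, the relevant forbidden configuration is a value $x<k$ (other than $1$) playing the role of the ``$2$'', followed by $1$, followed by two ascending values above both — so I would track how the elements exceeding $k$ interleave with the block $2,3,\dots,k-1$ and count the admissible insertions.

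I would organize the count over the cases $k=n$, $3\le k\le n-1$, and $k=2$, just as before. For $k=n$, $321$-avoidance forces the unique permutation $n\,1\,2\cdots(n-1)$. For the middle range I expect the internal subcases to again partition by the value of $\pi_3$ (whether it lies below $k$, equals $k+1$, or exceeds $k+1$), with $2134$-avoidance dictating that once the small block $2\cdots(k-1)$ is fixed, no large element may be inserted into it, yielding a contribution of the form $2(n-k)$ summed over $k$. For $k=2$, collapsing $2$ and $1$ into a single effective ``$1$'' reduces the count to $|F_{n-2}(321,\sigma')|$ for the induced pattern, contributing the linear term. The sum $1+\sum_{k=3}^{n-1}2(n-k)+(\text{linear term})$ should reproduce $n^2-4n+5$.

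The main obstacle will be verifying that the $2134$ constraint genuinely produces the same arithmetic as $1243$ did, rather than merely a similar-looking structure; in particular I must confirm the subcase contributions ($n-k$, $n-k-1$, etc.) are correct when the forbidden pattern starts with $2$ rather than $1$, since the position of $1$ inside the pattern changes which interleavings of the large elements with the block $2\cdots(k-1)$ are actually obstructed. A clean alternative worth checking is whether $1243$ and $2134$ are Wilf-equivalent relative to $F_n(321)$ — the matching formulae strongly suggest a bijection (for instance an inverse or a reverse-complement symmetry preserving both $321$- and $\pattern$-avoidance) that would let me derive Theorem~\ref{2134} from Theorem~\ref{1243} with far less casework; I would investigate that symmetry first, and fall back on the direct case analysis above only if no such symmetry respects the Fishburn condition.
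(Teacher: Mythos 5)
Your blueprint breaks down at its very first reduction. You claim $2134 = 1 \oplus 123$, but $1 \oplus 123 = 1234$; in fact $2134 = 21 \oplus 12$ and does \emph{not} begin with $1$. Consequently the reduction $|F_n^{(1)}(321,2134)| = |F_{n-1}(321,123)|$ is false, and it cannot be repaired into anything useful: as you yourself note, Erd\H{o}s--Szekeres gives $S_{n-1}(123,321)=\emptyset$ for $n\geq 6$, so your first piece would vanish for large $n$, while the needed count is quadratic. The correct tool here is the \emph{other} clause of the paper's observation: since $2134$ does not begin with $1$, a leading $1$ (leftmost and smallest) cannot take part in any occurrence of $2134$, whence $|F_n^{(1)}(321,2134)| = |F_{n-1}(321,2134)|$ --- the \emph{same} pattern set, which is exactly what turns the paper's proof into an induction on $n$ and gives $|F_n^{(1)}(321,2134)| = n^2-5n+8$.

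Your expected split between the two pieces is therefore backwards, and the planned case analysis for $\pi_2=1$ would also fail. For $2134$, the prefix $(\pi_1,\pi_2)=(k,1)$ already realizes the ``21'' of the pattern, so any two elements larger than $k$ appearing later in increasing order complete a copy of $2134$; hence the elements $k+1,\ldots,n$ must occur in decreasing order, and $321$-avoidance then forces $n-k\leq 2$, i.e.\ $k\in\{n-2,\,n-1,\,n\}$. Thus $|F_n^{(2)}(321,2134)| = 2n-4$ is \emph{linear} (this is the paper's proposition), and there is no middle range $3\le k\le n-1$ contributing $2(n-k)$, nor any linear $k=2$ term, as you anticipate by analogy with Proposition~\ref{1243-2}; for $n\ge 5$ the case $k=2$ is empty. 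Finally, the fallback symmetry does not rescue the argument: reverse--complement does send $1243$ to $2134$ and fixes $321$, but it does not preserve the Fishburn condition (for instance, $231$ contains the bivincular Fishburn pattern while its reverse--complement $312$ is a Fishburn permutation), so the Wilf equivalence of $1243$ and $2134$ within Fishburn permutations cannot be read off from that symmetry --- which is precisely why the paper proves Theorems~\ref{1243} and~\ref{2134} by separate, structurally different counts.
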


	\begin{prop}
	For $n\geq 3$, $|F_{n}^{(2)}(321,2134)|=2n-4$.
\end{prop}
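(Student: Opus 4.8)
The plan is to compute $|F_n^{(2)}(321,2134)|$ by fixing $\pi_1=k$ and analyzing the structure forced by the three avoidance conditions, exactly as in the companion Proposition~\ref{1243-2}. Since $\pi\in F_n^{(2)}$ means $\pi_2=1$, I would write $\pi=k\,1\,\pi_3\cdots\pi_n$ and partition the count according to the value $k=\pi_1$. Small cases ($n=3,4$ perhaps) should be checked by hand to anchor the formula $2n-4$ and to verify the claimed range $n\ge 3$.

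For the generic analysis I would exploit the two structural constraints separately. Avoiding $321$ forces strong monotonicity: any element smaller than $k$ that sits to the right of another such element, together with $1$, produces a $321$, so the elements below $k$ appearing after position~2 must essentially be increasing, and similarly large blocks are pinned down. The new ingredient compared with the $1243$ case is that the forbidden pattern is $2134$, which begins with a descent $21$; here the prefix $k\,1$ already realizes the ``$21$'' of a potential $2134$, so $2134$-avoidance becomes a condition on the elements to the right of $1$: we cannot have two elements $x<y$ with $k<x<y$ appearing in increasing order after position~2 unless one of them is blocked, because $k,1,x,y$ would then be a copy of $2134$. I would combine this with the Fishburn condition (no $231$-copy $\pi_i\pi_{i+1}\pi_j$ with $\pi_i=\pi_j+1$) to show that the tail is almost completely determined once $k$ is chosen, leaving only a linear number of choices in total. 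I expect the cases to be $k=n$, $2\le k\le n-1$ split by the value of $\pi_3$ (namely $\pi_3<k$, $\pi_3=k+1$, or $\pi_3\ge k+2$), and the boundary case interacting with $F_{n-2}(321,\sigma)$ via the ``$21$ acts as $1$'' reduction.

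The main obstacle will be correctly enumerating the middle range $2\le k\le n-1$: I must show that each admissible $k$ contributes a fixed small constant (so the sum over $k$ is linear, giving $2n-4$ rather than a quadratic), which is the qualitative difference from the $1243$ computation where each $k$ contributed $2(n-k)$. The key point to nail down is that $2134$-avoidance, because its initial descent is already witnessed by $k\,1$, is far more restrictive on the suffix than $1243$-avoidance was, collapsing the number of valid arrangements of the large elements to a bounded quantity. Once the per-$k$ contribution is pinned to a constant and the endpoint cases are handled, summing over $k$ yields
\[
|F_n^{(2)}(321,2134)| = 2n-4,
\]
completing the proof.
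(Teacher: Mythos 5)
Your opening observation is exactly the right one: since the prefix $k\,1$ already supplies the ``21'' of a potential $2134$, no two elements larger than $k$ may appear in increasing order, i.e.\ the elements of $\{k+1,\ldots,n\}$ must occur in \emph{descending} order. But you stop short of the decisive step, which is to combine this with $321$-avoidance: three or more descending elements would themselves form a $321$, so $|\{k+1,\ldots,n\}|=n-k\leq 2$, i.e.\ $k\geq n-2$. This single inequality is what organizes the whole proof, and without it your anticipated case structure is wrong in two ways. First, you expect the middle range of $k$ (split by $\pi_3<k$, $\pi_3=k+1$, $\pi_3\geq k+2$) and a boundary case $k=2$ reducing to $F_{n-2}(321,\sigma)$ to each contribute; in fact every $k\leq n-3$ (including $k=2$ once $n\geq5$) contributes \emph{zero} permutations, so those cases are vacuous and there is no ``$21$ acts as $1$'' reduction here at all. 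Second, your stated mechanism for linearity --- ``each admissible $k$ contributes a fixed small constant, so the sum over $k$ is linear'' --- is the opposite of what happens: only the three values $k\in\{n-2,n-1,n\}$ are admissible, and their contributions are $n-3$, $n-2$, and $1$ respectively, two of which grow linearly in $n$. The total $1+(n-2)+(n-3)=2n-4$ comes from a bounded number of cases with linear counts, not a linear number of cases with bounded counts.

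Concretely, the remaining work your plan does not reach is: for $k=n-1$, the small elements $2,\ldots,n-2$ are forced increasing and $n$ may sit in any of $n-2$ sites; for $k=n-2$, the element $n$ must precede $n-1$ (else $(n-2)\,1\,(n-1)\,n$ is a $2134$), and $n-1$ is then forced into the rightmost site (else $n,n-1$ and a smaller element to the right form a $321$), leaving $n-3$ placements for $n$. Your key observation is sound and is the same one the paper uses, but as written the plan would not produce the formula: carrying it out would force you to discover the bound $k\geq n-2$ and discard the case analysis you proposed.
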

\begin{proof}
	For $n=3$, $|F_{3}^{(2)}(321,2134)|=2=2\times 3-4$.

For $n\geq 4$, we let $\pi_1=k$. We first claim $k\geq n-2$ when $\pi_2=1$. Otherwise, $k\leq n-3$. To avoid 2134, the elements $k+1,\ldots,n$ appearing after $k\ 1$ must be in descending order. However, since $|\{k+1,\ldots,n\}|=n-k\geq 3$, there exists a forbidden copy of 321, which $\pi$ avoids. Thus $k\geq n-2$ as claimed.   We proceed to discuss according to the three values of $k$ for $n\geq 4$.
	\begin{enumerate}[1)]
		\item $k=n$. The elements $2,\ldots,n-1$ that appear after $n$ must be increasing to avoid 321. So the form of $\pi$ is  $n\ 1\ 2\cdots n-1$.

		\item 	$k=n-1$. Similarly, the elements $2,\ldots,n-2$ that are positioned to the right of $n-1$  are arranged in increasing order. Suppose that $\pi^{\prime}=n-1\ 1\ ^{1}\ 2\ ^{2}\cdots\ ^{n-3}\ n-2\ ^{n-2}$. We can insert $n$ into each of the  $n-2$ active sites to obtain $\pi$. Thus, the number of permutations in this case is $n-2$.
		\item $k=n-2$. To avoid 321, the elements $2,\ldots,n-3$ are in increasing order. Let $$\pi^{\prime}=n-2\ 1\ ^{1}\ 2\ ^{2}\cdots\ ^{n-4}\ n-3\ ^{n-3}.$$ To construct $\pi$ we insert $n-1$ and $n$ into the sites labeled  from 1 to $n-3$. First, $n$ must be placed to the left of $n-1$ to avoid the pattern 2134.  Second, $n-1$ cannot appear in the $i$-th site for $1\leq i\leq n-4$ since $n,n-1,n-3$ form a copy of 321. Thus $n-1$ can only be inserted to the $(n-3)$-th site. Let $$\pi^{\prime\prime}=n-2\ 1\ ^{1}\ 2\ ^{2}\cdots\ ^{n-4}\ n-3\ ^{n-3}\ n-1.$$  We should insert $n$ into $\pi^{\prime\prime}$ to obtain $\pi$. Notice that $n$ can  be inserted into each of the $n-3$ sites. Thus, the number of permutations in this case is $n-3$.
	\end{enumerate}
	Therefore, for $n\geq 4$, we have
	\[|F_{n}^{(2)}(321,2134)|=1+(n-2)+(n-3)=2n-4.\]

    \vspace{-1cm}

    \ \ \
\end{proof}

\begin{proof}[Proof of Theorem \ref{2134}]
	For $n=2$, $F_{n}(321,2134)=\{12,21\}$, and thus $$|F_{2}(321,2134)|=2=2^{2}-3\times 2+4.$$
  For $n\geq 3$, by induction, we have that  $$|F_{n}^{(1)}(321,2134)|=|F_{n-1}(321,2134)|=n^{2}-5n+8.$$
	Thus, we get
	\[
	\begin{aligned}
		|F_{n}(321,2134)|=&|F_{n}^{(1)}(321,2134)|+|F_{n}^{(2)}(321,2134)|
		\\=&(n^2-5n+8)+(2n-4)\\
		=&n^{2}-3n+4,
	\end{aligned}
	\]which completes the proof.
\end{proof}

\subsection{Enumerating $F_{n}(321,1324)$}
The main result of this subsection is as follows.
\begin{thm}\label{1324}
	For $n\geq 3$,  $|F_{n}(321,1324)|=\dfrac{3}{2}n^{2}-\dfrac{13}{2}n+10$.
\end{thm}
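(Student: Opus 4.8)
The plan is to follow the same two-step strategy used for Theorems~\ref{1243} and~\ref{2134}: decompose $F_n(321,1324)$ by the position of the entry $1$ via Lemma~\ref{lem-12}, dispatch the class $F_n^{(1)}$ by a direct-sum reduction, and isolate the real work into a separate proposition computing $|F_n^{(2)}(321,1324)|$. Since $1324 = 1\oplus 213$, the observation preceding Section~\ref{sec2} gives $|F_n^{(1)}(321,1324)| = |F_{n-1}(321,213)|$. I would first record the companion count $|F_n(321,213)| = n$ (Lemma~\ref{lem:321-213}), proved exactly as Lemma~\ref{lem:321-132}: by Lemma~\ref{lem:classical_pattern} we have $F_n(321,213) = S_n(231,321,213)$, and Simion and Schmidt~\cite{Simion1985} give $|S_n(231,321,213)| = n$. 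Hence $|F_n^{(1)}(321,1324)| = n-1$, and the theorem is reduced to proving $|F_n^{(2)}(321,1324)| = \tfrac32 n^2 - \tfrac{15}{2} n + 11$ for $n\ge 3$.

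For the second class, write $\pi = k\,1\,\sigma \in F_n^{(2)}(321,1324)$ with $\pi_1 = k$ and $\sigma = \pi_3\cdots\pi_n$. Three reductions organize the count: avoidance of $321$ forces the values $2,\dots,k-1$ to occur in increasing order within $\sigma$ and forces $\sigma$ to avoid $321$; since $1$ sits in position $2$ and the last three entries of $1324$ standardize to $213$, the permutation $\pi$ avoids $1324$ if and only if $\sigma$ avoids $213$; and the Fishburn condition on $\pi$ restricts to a condition on $\sigma$. I would then split on $k$. The case $k=n$ forces $\pi = n\,1\,2\cdots(n-1)$, contributing one permutation; the case $k=2$ lets one treat the prefix $21$ as a single smallest entry, reducing to $|F_{n-2}(321,213)| = n-2$ exactly as in Proposition~\ref{1243-2}; and the bulk $3\le k\le n-1$ is handled by a secondary split on the value of $\pi_3$ into the subcases $\pi_3<k$ (which forces $\pi_3 = 2$), $\pi_3 = k+1$, and $\pi_3 \ge k+2$, counting in each the admissible interleavings of the large entries $k+1,\dots,n$.

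The crux, and the step I expect to be the main obstacle, is the Fishburn condition in the range $3\le k\le n-1$. In contrast with the patterns $1243$ and $2134$, where the auxiliary pattern is classical and the suffix is literally a pattern-avoiding Fishburn permutation, here the bivincular nature of $\pattern$ interacts with the removed value $k$: since $k$ occupies the front and is absent from $\sigma$, the value-adjacency that would let $k+1$ complete a forbidden copy is broken, so $\sigma$ is \emph{not} simply a $\{213,321\}$-avoiding Fishburn permutation. One must instead track precisely which copies of $231$ with consecutive values $\pi_a = \pi_b + 1$ actually occur, the entry $k+1$ being exempt from playing the role of the middle value since its value-predecessor $k$ is missing. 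This extra freedom is what makes the per-$k$ counts genuinely depend on both $n$ and $k$, rather than on $n-k$ alone, and what pushes the leading behaviour up to $\tfrac32 n^2$.

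Finally, summing the contributions of the three subcases over $k\in\{3,\dots,n-1\}$ together with the boundary contributions $1$ (for $k=n$) and $n-2$ (for $k=2$) should give $|F_n^{(2)}(321,1324)| = \tfrac32 n^2 - \tfrac{15}{2} n + 11$; combined with $|F_n^{(1)}(321,1324)| = n-1$ this yields $|F_n(321,1324)| = \tfrac32 n^2 - \tfrac{13}{2} n + 10$. I would check the small cases $n=3,4$ by hand to anchor the base of the proposition and the formula.
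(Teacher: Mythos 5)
Your top-level framework is exactly the paper's: decompose by Lemma~\ref{lem-12}, use $1324=1\oplus 213$ and Lemma~\ref{lem:321-213} to get $|F_n^{(1)}(321,1324)|=|F_{n-1}(321,213)|=n-1$, handle $k=2$ by gluing $2\,1$ into a single minimum (giving $|F_{n-2}(321,213)|=n-2$) and $k=n$ directly, and reduce everything to showing $|F_n^{(2)}(321,1324)|=\tfrac32n^2-\tfrac{15}{2}n+11$. Your two structural observations are also correct: since $1$ sits in position $2$, $\pi$ avoids $1324$ if and only if the suffix $\sigma$ avoids $213$; and the standardization of $\sigma$ need not be Fishburn, because the missing value $k$ exempts the pair $(k+1,k-1)$ from the value-adjacency requirement. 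But there is a genuine gap: for the bulk range $3\le k\le n-1$ you never produce a count. You name a subcase split on $\pi_3$ (carried over from the proof of Proposition~\ref{1243-2} for the pattern $1243$), correctly flag the Fishburn interaction as the obstacle, and then assert that summing ``should give'' the formula. The per-$k$ counts are the entire content of Proposition~\ref{prop:321-1324-2}, and identifying the obstacle is not the same as overcoming it. In particular, under your split the subcase $\pi_3=2$ still contains essentially all of the difficulty: one must determine how the large entries $k+1,\ldots,n$ may interleave with the increasing run $2,\ldots,k-1$. The paper resolves this by splitting instead on how many large entries lie to the left of $k-1$: none gives $|F_{n-k}(321,213)|=n-k$; exactly one forces that entry to be $n$ (else $1,x,k-1,n$ is a $1324$) and gives $k-2$ choices of site; two or more forces \emph{all} of $k+1,\ldots,n$ to sit, increasing, in a single site left of $k-1$ (the Fishburn pattern is what rules out any large entry remaining to the right of $k-1$), giving another $k-2$. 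This yields $n+k-4$ per $k$, and none of it appears in your proposal.

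A second, more concrete danger in your plan: you sum uniformly over $3\le k\le n-1$, but the generic middle-range count $n+k-4$ is valid only for $3\le k\le n-2$. At $k=n-1$ the ``two or more large entries to the left of $k-1$'' subcase is empty (there is only one large entry, namely $n$), and the correct count is $n-2$, not $2n-5$. The paper accordingly treats $k=n-1$ as a separate case; if your summation does not, the total comes out wrong by $n-3$ and the formula $\tfrac32n^2-\tfrac{15}{2}n+11$ fails. So the proposal is a plausible proof plan whose skeleton matches the paper, but the central enumeration is missing and the one place where your organization differs from the paper's (lumping $k=n-1$ into the generic range) is precisely a place where an unexamined uniformity assumption would produce an error.
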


\begin{lem}\label{lem:321-213}
		For $n\geq 1$, $|F_{n}(321,213)|=n$.
	\end{lem}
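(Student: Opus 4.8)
The plan is to reduce the statement to a purely classical pattern-avoidance count and then extract a one-term recursion. Since $213\in\{132,213,312,3142\}$, Lemma~\ref{lem:classical_pattern} applies directly and gives
\[
F_n(321,213)=S_n(231,321,213).
\]
This mirrors the proof of Lemma~\ref{lem:321-132}, where the analogous reduction was combined with the Simion--Schmidt enumeration of $S_n(231,321,132)$. Rather than quoting a table, I would give a short self-contained argument showing $|S_n(231,321,213)|=n$ by analyzing the first entry.

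First I would write $\pi_1=a$ for $\pi\in S_n(231,321,213)$ and split the remaining entries into the set $L$ of those smaller than $a$ and the set $G$ of those larger than $a$. Avoidance of $321$ forces the entries of $L$ to occur in increasing order, since any descent among them would form a $321$ together with $a$. Avoidance of $213$ forbids an entry of $L$ from being followed (anywhere later) by an entry of $G$, because $a$ together with such a pair is a copy of $213$; symmetrically, avoidance of $231$ forbids an entry of $G$ from being followed by an entry of $L$. These two constraints cannot both be met unless one of $L,G$ is empty, so $a=\pi_1$ must be either the smallest entry $1$ or the largest entry $n$.

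This dichotomy yields the recursion. If $\pi_1=1$, deleting it leaves an arbitrary member of $S_{n-1}(231,321,213)$ on $\{2,\dots,n\}$, contributing $|S_{n-1}(231,321,213)|$. If $\pi_1=n$, then $G$ is empty, $L=\{1,\dots,n-1\}$ must appear in increasing order, and one checks directly that the single resulting permutation $n\,1\,2\cdots(n-1)$ indeed avoids all three patterns; this case contributes exactly $1$. Writing $f(n)=|S_n(231,321,213)|$, I obtain $f(n)=f(n-1)+1$ for $n\geq2$ with $f(1)=1$, whence $f(n)=n$.

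The argument is essentially routine once Lemma~\ref{lem:classical_pattern} is invoked; the only real content is the dichotomy on $\pi_1$, and the step requiring the most care is verifying that the three simultaneous restrictions (not merely $213$ and $231$) genuinely force $L$ or $G$ to be empty, together with the direct check that the lone permutation arising when $\pi_1=n$ lies in the class. Alternatively, one could bypass the recursion entirely and cite the relevant entry of the Simion--Schmidt classification of triply-restricted permutations, exactly as in Lemma~\ref{lem:321-132}.
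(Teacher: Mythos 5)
Your proposal is correct, and it shares its first step with the paper: both invoke Lemma~\ref{lem:classical_pattern} to reduce the problem to counting $S_n(231,321,213)$. After that the routes diverge. The paper finishes in one line by applying the complement operation (which sends the triple $(231,321,213)$ to $(213,123,231)$) and citing Simion and Schmidt's Lemma~6(d) for $|S_n(213,123,231)|=n$. You instead give a self-contained enumeration: the dichotomy argument on $\pi_1$ (splitting the later entries into $L$ and $G$ and showing that $213$-avoidance and $231$-avoidance together force one of them to be empty) is sound, the check that prepending $1$ creates no occurrence of any of the three patterns is the right justification for the bijective step, and the verification that $n\,1\,2\cdots(n-1)$ lies in the class closes the recursion $f(n)=f(n-1)+1$, $f(1)=1$. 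What your approach buys is independence from the external classification and from the complementation trick --- the argument is transparent and could be adapted to nearby pattern triples; what the paper's approach buys is brevity and consistency with its treatment of Lemmas~\ref{lem:321-132} and~\ref{lem:321-312}, which are all dispatched by citing the same Simion--Schmidt paper. Either proof is acceptable; yours is longer but fully verifiable on the spot.
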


\begin{proof}
	By Lemma~\ref{lem:classical_pattern}, we have $|F_n(321,213)|=|S_n(231,321,213)|$. It is easy to see that the pairs $ (231,321,213) $ and $ (213,123,231) $ are Wilf-equivalent by applying the complement operation. In~\cite[Lemma 6 (d)]{Simion1985}, Simion and Schmidt proved that $|S_n(213,123,231)|=n$. Thus, we have $|F_{n}(321,213)|=n$.
\end{proof}

	\begin{prop}\label{prop:321-1324-2}
		For $n\geq 3$, $|F_{n}^{(2)}(321,1324)|=\dfrac{3}{2}n^{2}-\dfrac{15}{2}n+11$.
	\end{prop}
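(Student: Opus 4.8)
The plan is to follow the same case-analysis strategy used in Proposition~\ref{1243-2}, fixing $\pi_1=k$ and studying the placement of the remaining large elements relative to $k$. Since we are in $F_n^{(2)}$, we have $\pi_1=k$ and $\pi_2=1$, so $\pi = k\ 1\ \pi_3\cdots\pi_n$. The $321$-avoidance constraint forces any elements smaller than $k$ appearing after $1$ to be increasing, and the $1324$-avoidance constraint controls how elements larger than $k$ can be interleaved with this increasing block. The key structural observation is that since $\pi_2 = 1$, the element $1$ together with any element $a<b$ occurring later with something larger than $b$ squeezed appropriately creates a $1324$; more precisely, once $1$ is fixed in position $2$, a $1324$ pattern is essentially a $324$ (i.e., a $213$ in values $>1$) occurring after position $2$, so the tail $\pi_3\cdots\pi_n$ must itself avoid $213$ as well as $321$.

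First I would handle the small base cases ($n=3$, perhaps $n=4,5$) by direct enumeration to match the formula $\tfrac32 n^2-\tfrac{15}{2}n+11$. Then for general $n$ I would split on the value of $k=\pi_1$, distinguishing $k=n$ (forcing $\pi = n\,1\,2\cdots(n-1)$, one permutation), the boundary value $k=2$ (where $2$ and $1$ act jointly as a minimal block, reducing to $|F_{n-2}(321,213)| = n-2$ by Lemma~\ref{lem:321-213}), and the generic range $3\le k\le n-1$. For generic $k$ the elements $\{2,\ldots,k-1\}$ form an increasing block $B$ (by $321$-avoidance) and the elements $\{k+1,\ldots,n\}$ must be arranged so that, together with $1$ and $B$, no $1324$ appears. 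I would set up the active-site insertion argument: treat the increasing block $2\cdots(k-1)$ as a single unit and count the ways to interleave it with the large elements $k+1,\ldots,n$ subject to both forbidden patterns.

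The main obstacle I expect is correctly characterizing the admissible arrangements of the large elements $\{k+1,\ldots,n\}$ and their interaction with the small block $B$. Unlike the $1243$ case, the pattern $1324$ has its two largest entries non-adjacent and straddling a smaller entry, so the constraint is subtler: an element $x>k$ placed before some element of $B$ can complete a $1324$ with $1$, a later element of $B$, and a still-larger element. I would argue that $1324$-avoidance forces the large elements to be either in increasing order to the right of $B$ or to form a controlled descending/interleaved configuration near the front, and then count each admissible shape as $k$ ranges over $3,\ldots,n-1$. The count for each fixed $k$ should be a linear function of $n-k$ (likely something like $3(n-k)-1$ or a similar expression), and summing $\sum_{k=3}^{n-1}$ of this linear term, together with the $k=n$ and $k=2$ contributions, must reproduce the quadratic $\tfrac32 n^2 - \tfrac{15}{2}n + 11$.

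The verification that these contributions telescope into the stated formula is the routine-but-delicate final step; I would confirm it by matching the leading coefficient $\tfrac32$ against the expected $\sum_{k=3}^{n-1}(\text{linear in }n-k)$ and then pin down the lower-order terms using the base cases already computed. Once $|F_n^{(2)}(321,1324)|$ is established, the companion result $|F_n^{(1)}(321,1324)| = |F_{n-1}(321,1324)|$ (since $1324$ begins with $1$, one uses the decomposition $1324 = 1\oplus 213$ and the relation $|F_n^{(1)}(321,1\oplus\tau)| = |F_{n-1}(321,\tau)|$ from the observation preceding Section~\ref{sec2}) will combine with this proposition to yield Theorem~\ref{1324}.
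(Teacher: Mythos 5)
Your setup is sound and matches the paper's strategy: split on $k=\pi_1$, handle $k=2$ via the block $2\,1$ and Lemma~\ref{lem:321-213} (giving $n-2$), handle $k=n$ (giving $1$), and your structural observation that, once $\pi_2=1$, avoiding $1324$ is equivalent to the tail $\pi_3\cdots\pi_n$ avoiding $213$ is correct (and a clean way to see the reductions). But the heart of the proposition --- the count for the generic range of $k$ --- is never actually carried out. You say you ``would argue'' that the large elements are ``either in increasing order to the right of $B$ or form a controlled descending/interleaved configuration near the front,'' and that the per-$k$ count ``should be a linear function of $n-k$ (likely something like $3(n-k)-1$).'' That is a plan plus a guess, not a proof, and the guess is wrong: the correct count, which the paper obtains by splitting on how many elements of $\{k+1,\ldots,n\}$ lie to the left of $k-1$, is $(n-k)+(k-2)+(k-2)=n+k-4$ for $3\le k\le n-2$ --- note this \emph{decreases} as $n-k$ grows, the opposite of your guessed form. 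The three subcases are: (i) no large element left of $k-1$, reducing to $|F_{n-k}(321,213)|=n-k$; (ii) exactly one, which $1324$-avoidance forces to be $n$ itself (otherwise $1,x,k-1,n$ is a copy of $1324$), giving $k-2$ choices of insertion site; (iii) at least two, in which case \emph{all} of $k+1,\ldots,n$ must sit, in increasing order, inside a single site between $1$ and $k-1$ (two distinct sites yield $i,x,j,y$ as a copy of $1324$, and any large element remaining to the right of $k-1$ forces a copy of the Fishburn pattern), again giving $k-2$. None of this case analysis, which is the actual content of the proposition, appears in your proposal.

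Two smaller points. First, your generic range $3\le k\le n-1$ is off: the formula $n+k-4$ fails at $k=n-1$ (subcase (iii) is vacuous there), and the paper treats $k=n-1$ separately, obtaining $n-2$ rather than $2n-5$; folding $k=n-1$ into the generic count would not reproduce $\tfrac{3}{2}n^{2}-\tfrac{15}{2}n+11$. Second, in your closing remark the identity $|F_n^{(1)}(321,1324)|=|F_{n-1}(321,1324)|$ is false (for instance $|F_5^{(1)}(321,1324)|=4$ while $|F_4(321,1324)|=8$); what your own parenthetical justifies, and what the paper actually uses, is $|F_n^{(1)}(321,1324)|=|F_{n-1}(321,213)|=n-1$.
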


	\begin{proof}
		We can check this claim directly for $n\leq 5$.
	Let $\pi_1=k$. We consider possible values of $k$ for $n\geq 6$.
		\begin{enumerate}[1)]
			\item $k=2$. We can consider the first two elements 2 and 1 as a single element playing the role of 1. Thus by Lemma~\ref{lem:321-213}, the number of permutations in this case is $|F_{n-2}(321,213)|=n-2$.

\item $3\leq k\leq n-2$. Avoiding 321 implies that $2,\ldots,k-1$ to the right of $k\ 1$ must be in increasing order. Next we consider the placement of elements $k+1,\ldots,n$.
\begin{enumerate}[a)]
\item None of the elements in $\{k+1,\ldots,n\}$ is to the left of $k-1$. Then we can consider $k\ 1\ 2\cdots k-1$ as the smallest element 1. By Lemma \ref{lem:321-213},
the number of permutations in this subcase is $|F_{n-k}(321,213)|=n-k$.

\item Exact one element, denoted by $x$, in $\{k+1,\ldots,n\}$ is to the left of $k-1$. We claim  that $x=n$. Otherwise, $k+1\leq x<n$. Thus $1,x,k-1,n$ form a copy of 1324, which $\pi$ avoids. Therefore, $k+1,\ldots,n-1$ appearing after $n$ must be in increasing order. This implies that $n$ can be inserted into each site between 1 and $k-1$ as follows
\[
\pi^\prime=k\ 1\ ^{1}\ 2\ ^{2}\cdots\ ^{k-2}\ k-1\ k+1\cdots n-1.
\]
Thus the number of permutations in this subcase is $k-2$.

\item At least two elements in $\{k+1,\ldots,n\}$ are to the left of $k-1$. Suppose that \[\pi^\prime=k\ 1\ ^{1}\ 2\ ^{2}\cdots\ k-2 \ ^{k-2}\ k-1.	\]
We claim that $k+1,\ldots,n$ are all to the left of $k-1$ in increasing order and they are inserted into the same site between 1 and $k-1$.

First, the elements $k+1,\ldots,n$ to the left of $k-1$ are increasing to avoid 321.

Second, the elements in $\{k+1,\ldots,n\}$ to the left of $k-1$ must be inserted into the same site.
Suppose that two elements from $k+1,\ldots,n$ with $x<y$ are inserted into the $i$-th and $j$-th site $(i<j)$, respectively. Then $i,x,j,y$ form a copy of 1324, which $\pi$ avoids.

Third, we claim  $k+1,\ldots,n$ are all to the left of $k-1$. Indeed, suppose that there is at least one element in  $\{k+1,\ldots,n\}$ to the right of $k-1$. To avoid 1324, the elements to the right of $k-1$ must be smaller than those to its left. Let $s$ be the largest element to the right of $k-1$. Then $s+1$ and the element immediately to its right together with $s$ form a copy of  $\pattern$. Therefore, there are no elements to the right of $k-1$.
Thus the number of permutations in this subcase is $k-2$.
\end{enumerate}

Thus, the number of permutations in such a case is $$(n-k)+(k-2)+(k-2)=n+k-4.$$

\item $k=n-1$. To avoid 321, the elements $2,\ldots,n-2$ are in increasing order. Suppose that
\[
\pi^{\prime}=n-1\ 1\ ^{1}\ 2\ ^{2}\cdots ^{n-3}\ n-2\ ^{n-2}.
\] Next we will insert $n$ into $\pi^{\prime}$. We see  that all the $n-2$ sites are active. So the number of permutations in this case is $n-2$.

\item $k=n$. To avoid 321, we have $
\pi=n\ 1\ 2\cdots n-1.
$
\end{enumerate}
Therefore, for $n\geq6$,\[
\begin{aligned}
			|F_{n}^{(2)}(321,1324)|=&(n-2)+\sum_{k=3}^{n-2}(n+k-4) +(n-2) +1\\
			=&\dfrac{3}{2}n^{2}-\dfrac{15}{2}n+11.
\end{aligned}
\]

        \vspace{-1cm}

        \ \ \
	\end{proof}

	\begin{proof}[Proof of Theorem \ref{1324}]
		For $n\geq 3$,  by Lemma~\ref{lem:321-213} and Proposition~\ref{prop:321-1324-2}, we have
		\[
		\begin{aligned}
			|F_{n}(321,1324)|=&|F_{n}^{(1)}(321,1324)|+|F_{n}^{(2)}(321,1324)|\\
			=&|F_{n-1}(321,213)|+|F_{n}^{(2)}(321,1324)|
			\\=&\left(n-1\right)+\left(\dfrac{3}{2}n^{2}-\dfrac{15}{2}n+11\right)\\
			=&\dfrac{3}{2}n^{2}-\dfrac{13}{2}n+10.
		\end{aligned}
		\]

        \vspace{-1cm}

        \ \ \
	\end{proof}

\section{Avoiding 321 and two classical patterns of size 4}
 \label{sec3}

\subsection{Enumerating $F_{n}(321,1423,2143)$}
The main result of this subsection is as follows.
\begin{thm}\label{1423-2143}
	For $n\geq 0$,  $|F_{n}(321,1423,2143)|=\binom{n}{2}+1$.
\end{thm}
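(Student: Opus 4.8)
The plan is to split $F_n(321,1423,2143)$ according to the position of the element $1$, as permitted by Lemma~\ref{lem-12}, and write
\[
|F_n(321,1423,2143)| = |F_n^{(1)}(321,1423,2143)| + |F_n^{(2)}(321,1423,2143)|.
\]
The cases $n\le 1$ are immediate, since the empty and singleton permutations give $1=\binom{n}{2}+1$, so I assume $n\ge 2$ throughout. The goal is then to prove $|F_n^{(1)}(321,1423,2143)| = n-1$ and $|F_n^{(2)}(321,1423,2143)| = \tfrac12(n^2-3n+4)$; their sum is exactly $\binom{n}{2}+1$.

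For the first summand I would exploit that $1423 = 1\oplus 312$ while $2143$ does not begin with $1$. Deleting the leading $1$ and using that patterns not beginning with $1$ are unchanged under this deletion while $1\oplus\tau$ reduces to $\tau$, I get $|F_n^{(1)}(321,1423,2143)| = |F_{n-1}(321,312,2143)|$ (here one also uses that avoiding $312$ forces avoiding $1423$, since every $1423$ contains a $312$). It remains to show $|F_m(321,312,2143)| = m$. By Lemma~\ref{lem:classical_pattern} with $\sigma=312$, these are the classical permutations of $S_m(231,321,312)$ that in addition avoid $2143$. A short structural argument identifies $S_m(231,321,312)$ with the products of pairwise disjoint adjacent transpositions (the Fibonacci class), and such a permutation contains $2143$ precisely when it has at least two transpositions. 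Hence avoiding $2143$ retains only the identity and the $m-1$ single adjacent transpositions, giving $|F_m(321,312,2143)| = m$ and therefore $|F_n^{(1)}(321,1423,2143)| = n-1$.

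The heart of the argument is the second summand, which I would treat by casework on $k:=\pi_1$, so that $\pi_2=1$ and $2\le k\le n$. Calling $2,\dots,k-1$ the \emph{low} elements and $k+1,\dots,n$ the \emph{high} elements, I first extract two rigidity constraints. Avoiding $321$ forces the lows into increasing order, since any descent among them together with $k$ is a copy of $321$; and because $k\,1$ is a descent lying below every high, avoiding $2143$ forces the highs into increasing order, as an inversion $h>h'$ among highs would make $k\,1\,h\,h'$ a copy of $2143$. With both subsequences increasing one checks that $321$ and $2143$ can no longer occur at all. Since the global minimum sits at position $2$, a copy of $1423$ exists iff some high is followed by two of its smaller (necessarily low) successors in increasing order; as the lows are increasing this is avoided exactly when at most one low lies to the right of the first high, and that low must then be the largest, $k-1$. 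Counting interleavings, either all lows precede the first high ($1$ way) or $k-1$ is slid into one of the $n-k$ slots strictly after the first high; one verifies directly that the resulting permutations are automatically Fishburn. This gives $1+(n-k)$ permutations in the generic range, with the degenerate ends $k=2$ (no lows) and $k=n$ (no highs) each contributing $1$, so that
\[
|F_n^{(2)}(321,1423,2143)| = 1 + \sum_{k=3}^{n}\bigl(1+(n-k)\bigr) = (n-1)+\binom{n-2}{2} = \frac{n^2-3n+4}{2}.
\]

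Combining the two summands yields $|F_n(321,1423,2143)| = (n-1)+\tfrac12(n^2-3n+4) = \tfrac12(n^2-n+2) = \binom{n}{2}+1$, as desired. I expect the main obstacle to be the structural analysis of $F_n^{(2)}$: correctly isolating the two rigidity constraints, proving that the \emph{only} remaining freedom is sliding the largest low past the first high, and confirming that this never introduces a stray copy of $1423$, $2143$, or the Fishburn pattern. By contrast, the reduction for $F_n^{(1)}$ is routine once the adjacent-transposition description of $S_m(231,321,312)$ is established.
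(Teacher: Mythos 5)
Your argument is correct, and on the harder half it is essentially the paper's own proof: you split by the position of $1$ exactly as in Lemma~\ref{lem-12}, and your analysis of $F_n^{(2)}(321,1423,2143)$ --- the low elements forced increasing by $321$, the high elements forced increasing by $2143$ via the descent $k\,1$, and $1423$ forcing every low except possibly $k-1$ to precede the first high --- produces precisely the same family of permutations as the paper's cases for $\pi_1=k$ (its subcases a) and b) for $3\le k\le n-1$ together with the ends $k=2$ and $k=n$), and the same count, written as $\tfrac{1}{2}(n^2-3n+4)$ instead of $\binom{n-1}{2}+1$. Where you genuinely diverge is the first summand: the paper proves $|F_n^{(1)}(321,1423,2143)|=n-1$ by direct casework, showing $\pi_2\in\{2,3\}$ and listing the resulting permutations explicitly, whereas you delete the leading $1$, transport $1423$ to $312$, identify $F_{n-1}(321,312,2143)$ with $S_{n-1}(231,321,312,2143)$ via Lemma~\ref{lem:classical_pattern}, and then use the description of $S_m(231,321,312)$ as direct sums of $1$'s and $21$'s (products of disjoint adjacent transpositions), of which exactly the identity and the $m-1$ single transpositions avoid $2143$. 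Your route is more structural and closer in spirit to how the paper itself treats Theorem~\ref{3142-2143} and Lemmas~\ref{lem:321-132}, \ref{lem:321-213}, \ref{lem:321-312}; note, however, that the cited Simion--Schmidt lemma only supplies the Fibonacci count, so the adjacent-transposition characterization and the observation that a copy of $2143$ in such a permutation amounts to two $21$-blocks must both be written out (they are short), as must the routine verification that your interleavings in the $F_n^{(2)}$ case create no occurrence of the Fishburn pattern.
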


\begin{prop}
	For $n\geq 2$, $|F_{n}^{(1)}(321,1423,2143)|=n-1.$
\end{prop}
\begin{proof}
	It is routine to prove for $n\leq 3$.
	Let $\pi_2=k.$ We claim that either $k=2$ or $k=3$. Assume that $k\geq 4$. We consider the relative order of 2 and 3. If 2 is to the left of 3, then $1,k,2,3$ form a copy of 1423. If 3 is to the left of 2, the elements $k,3,2$ form a copy of 321. So $k=2$ or $k=3$.
	\begin{itemize}
	\item If $k=3$, to avoid $\pattern$, we have $\pi_3=2$. Moreover,  to avoid 2143, the elements $4,\ldots,n$ appearing after $3\ 2$ must be in increasing order. Therefore, the permutation $ \pi $ takes the form \[
		\pi=1\ 3\ 2\ 4\cdots n.
		\]
	\item If $k=2$, we assume that $\pi=1\ 2\cdots j\ x\cdots$, where $2\leq j\leq n-2$ and $x\geq j+2$.   Note that the case where
	$\pi=1\ 2\cdots n$ is an exceptional case.  We claim that $x=j+2$. To prove this, we consider the order of $j+1$ and $j+2$ in $\pi$. On one hand, if $j+1$ is to the left of $j+2$, then $j,x,j+1,j+2$ form a copy of 1423. On the other hand, if $j+2$ is to the left of $j+1$, then $x,j+2,j+1$ form a copy of 321, which leads to a contradiction. Hence, we conclude that $x=j+2$. Furthermore, $j+1$ must  immediately follow $j+2$. If this is not true, then $j+2$, the element immediately to its right, and $j+1$ together form a copy of  $\pattern$.  Besides, to avoid 2143, the elements  $j+3\cdots n$ appearing after $j+2\ j+1$ must be in increasing order. So \[
		\pi=1\ 2\cdots j\ j+2\ j+1\ j+3\cdots n.
		\]Combining this with the exceptional permutation $\pi=1\ 2\cdots n$, the total number of permutations in this case is $(n-3)+1=n-2$, where $n-3$ accounts for the range of $j$ from 2 to $n-2$.
	\end{itemize}

	Therefore, for $n\geq 4$,  we have $$|F_{n}^{(1)}(321,1423,2143)|=1+(n-2)=n-1.$$

	\vspace{-1cm}

	\ \ \
\end{proof}

\begin{prop}
	For $n\geq 2$, $$|F_{n}^{(2)}(321,1423,2143)|=\binom{n-1}{2}+1.$$
\end{prop}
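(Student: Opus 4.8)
The plan is to stratify $F_n^{(2)}(321,1423,2143)$ by the value $k=\pi_1$, so that every permutation has the form $\pi=k\,1\,\pi_3\cdots\pi_n$, and to pin down the admissible shapes for each $k$. Write $S=\{2,\dots,k-1\}$ for the values below $k$ (other than $1$) and $B=\{k+1,\dots,n\}$ for the values above $k$; thus $|S|=k-2$ and $|B|=n-k$. Three structural facts, each proved by exhibiting a forbidden pattern, will cut the possibilities down to a single one-parameter family. First, the elements of $S$ must occur in increasing order, since a descent $s_i>s_j$ among them together with the leading $k$ would be a copy of $321$. Second, the elements of $B$ must occur in increasing order: a descent $b>b'$ among big values, preceded by the prefix $k\,1$, yields $k\,1\,b\,b'\simeq 2143$. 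Third, at most one element of $S$ may lie to the right of the leftmost big value; otherwise the leading $1$, that big value, and two increasing small values $s<s'$ to its right form $1\,b\,s\,s'\simeq 1423$. Since $S$ is increasing, this forces $2,\dots,k-2$ to precede every element of $B$, with only $k-1$ possibly appearing later.

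Combining these facts, for $3\le k\le n$ every admissible $\pi$ is $k\,1\,2\,3\cdots(k-2)$ followed by the increasing block $k+1,k+2,\dots,n$ into which the single value $k-1$ has been inserted in one of the $n-k+1$ available gaps. I would then check the (easier) converse: each of these $n-k+1$ candidates really avoids all of $321$, $1423$, $2143$, and \pattern. The point is that $k-1$ is smaller than every big value and larger than every value already placed, so inserting it creates no new descent among big values (keeping $2143$ and $321$ away), leaves at most one small value after any big value (keeping $1423$ away), and produces no ascent $\pi_i\pi_{i+1}$ with $\pi_i-1$ occurring later (keeping \pattern\ away, since $\pi_i-1$ always sits to the left). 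Hence for each $k$ in this range the count is exactly $n-k+1$.

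The case $k=2$ must be treated separately: here $S=\varnothing$, the big block $3,4,\dots,n$ is forced to be increasing by the second fact above, and there is no value $k-1$ left to insert, so $\pi=2\,1\,3\,4\cdots n$ is the unique admissible permutation, contributing $1$. Summing the contributions gives
\[
|F_n^{(2)}(321,1423,2143)|=1+\sum_{k=3}^{n}(n-k+1)=1+\sum_{j=1}^{n-2}j=\binom{n-1}{2}+1,
\]
and the small cases $n=2,3$ can be verified by hand to anchor the formula.

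I expect the main obstacle to be the third structural fact and the converse verification rather than the bookkeeping: one has to argue cleanly that no big value may precede two small values, and then confirm that the freedom to slide $k-1$ through the big block never resurrects any of the four forbidden patterns --- in particular ruling out stray copies of $1423$ and \pattern\ that involve $k-1$ together with values on both sides of it. Once these are nailed down, the summation is routine.
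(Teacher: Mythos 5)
Your proposal is correct and follows essentially the same route as the paper: both stratify by $k=\pi_1$, use $321$ to force the small values increasing, $2143$ to force the large values increasing, and $1423$ to force $2,\dots,k-2$ before every large value so that only $k-1$ can slide through the increasing block $k+1,\dots,n$, giving $n-k+1$ permutations per $k$ plus the single permutation for $k=2$. The only differences are cosmetic: you fold $k=n$ into the general range and spell out the converse verification, which the paper leaves implicit.
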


\begin{proof}
	We can directly check the cases when $n\leq 4$.
	For $n\geq 5$, we let $\pi_1=k$ with $2\leq k\leq n$.  We shall discuss the different possibilities based on the value of $k$.
	\begin{enumerate}[1)]
		\item If $k=2$, then the avoidance of  2143 implies that $3,4,\ldots,n$ to the right of $2\ 1$ must be in increasing order.  So $\pi=2\ 1\ 3\cdots n$.
		\item If $3\leq k\leq n-1$, we need both the elements $2,\ldots,k-1$  and $k+1,\ldots ,n$ to be arranged in increasing order to avoid the patterns 321 and 2143, respectively.  Suppose that  $$\pi^{\prime}=k\ 1\ ^{1}\ 2\ ^{2}\cdots ^{k-3}\ k-2\ ^{k-2}\ k-1 \ ^{k-1}.$$ In this case, we need to insert the elements $k+1,\ldots,n$ into the available sites in increasing order. However, we find that $k+1$ cannot be inserted into the site to the left of $k-2$. If $k+1$ is inserted into each of the sites labeled from 1 to $k-3$, then the elements $1,k+1,k-2,k-1$ would form a copy of 1423, which $\pi$ avoids. Hence, $k+1$ can only be inserted into the sites to the right of $k-2$.
		\begin{enumerate}[a)]
			\item If $k+1$ is inserted into the $(k-2)$-th site, we assume that there are $j$ elements between $k-2$ and $k-1$, where $1\leq j\leq n-k$. The remaining  $n-k-j$ elements need to be inserted into the $(k-1)$-th site in increasing order. The resulting permutation can be represented as follows:
			\[
			\pi=k\ 1\ 2 \cdots k-2\ \underbrace{k+1 \cdots k+j}_j\ k-1\  \underbrace{k+j+1 \cdots n}_{n-k-j}.
			\]For each fixed value of $k$, there is only one valid permutation based on the value of  $j$. As $j$ ranges from 1 to $n-k$, the number of permutations in this subcase is $n-k$.
			\item If $k+1$ is inserted into the $(k-1)$-th site, then \[
			\pi=k\ 1\ 2\ \cdots k-2\ k-1\ k+1\cdots n.
			\]
		\end{enumerate}
		So in this case, the number of permutations is $n-k+1$.
		\item If $k=n$, in order to avoid 321, the elements $2,\ldots,n-1$ must be in increasing order. Thus $\pi=n\ 1\ 2\cdots n-1$.
	\end{enumerate}


	For $n\geq 5$, we have	\[
		|F_{n}^{(2)}(321,1423,2143)|
		=1+\sum_{k=3}^{n-1}(n-k+1)+1
		=\binom{n-1}{2}+1.
	\]

    \vspace{-1cm}
\end{proof}

\begin{proof}[Proof of Theorem \ref{1423-2143}]
We can check the statement directly for $n=0$ and $n=1$.
	For $n\geq 2$, we get
	\[
	\begin{aligned}
		|F_{n}(321,1423,2143)|=&|F_{n}^{(1)}(321,1423,2143)|+|F_{n}^{(2)}(321,1423,2143)|
		\\=&(n-1)+\binom{n-1}{2}+1\\
		=&\binom{n}{2}+1.
	\end{aligned}
	\]

    \vspace{-1cm}

    \ \ \
\end{proof}

\subsection{Enumerating $F_{n}(321,3142,2143)$ }
The main result of this subsection is as follows.
\begin{thm}\label{3142-2143}
	For $n\geq 0$,  $|F_{n}(321,3142,2143)|
	=\binom{n}{2}+1$.
\end{thm}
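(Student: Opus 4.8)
The plan is to first collapse the Fishburn-plus-$321$ condition into ordinary $231$-avoidance using Lemma~\ref{lem:classical_pattern}, and then to enumerate the resulting classical class by decomposing each permutation at its largest entry. For the reduction, I would apply Lemma~\ref{lem:classical_pattern} with $\sigma=3142$ to get $F_n(321,3142)=S_n(231,321,3142)$. Since the pattern $3142$ contains $231$ (its subsequence $3,4,2$ is an occurrence of $231$), every permutation avoiding $231$ automatically avoids $3142$, so $S_n(231,321,3142)=S_n(231,321)$. Intersecting with the remaining constraint of avoiding $2143$ then yields
\[
F_n(321,3142,2143)=S_n(231,321,2143),
\]
so it suffices to prove $|S_n(231,321,2143)|=\binom{n}{2}+1$.

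Next I would pin down the structure of $S_n(231,321)$. If $\pi\in S_n(231,321)$ has its largest entry $n$ in position $k$, then $231$-avoidance (with $n$ playing the role of the peak ``$3$'') forces every entry before $n$ to be smaller than every entry after $n$; hence the entries after $n$ are exactly $\{k,\dots,n-1\}$, and $321$-avoidance (with $n$ as the leading large entry) forces them to appear increasingly. Thus $\pi=\sigma\,n\,\tau$, where $\sigma$ is a $(231,321)$-avoiding permutation of $\{1,\dots,k-1\}$ and $\tau=(k)(k+1)\cdots(n-1)$ is increasing. In particular, the only descending pairs (inversions) of $\pi$ lie either inside $\sigma$ or are of the form $(n,t)$ with $t\in\tau$, and every entry of $\sigma$ is smaller than every entry of $\tau$.

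The counting step then rests on classifying the occurrences of $2143$ in $\pi=\sigma\,n\,\tau$. An occurrence of $2143$ consists of two inversions, a ``$21$'' followed by a strictly larger ``$43$''. Using the inventory of inversions above, I would show the only possibilities are (a) an occurrence lying entirely inside $\sigma$, and (b) a descending pair of $\sigma$ serving as the ``$21$'' together with a pair $(n,t)$ serving as the ``$43$''; a pair $(n,t)$ can never be the ``$21$'' since nothing exceeds $n$, and two inversions of type $(n,\cdot)$ share the position of $n$. Consequently $\pi$ avoids $2143$ if and only if $\sigma$ avoids $2143$ and, whenever $\tau\neq\varnothing$ (that is $k<n$), $\sigma$ is increasing. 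Splitting on $k$ gives $g(n-1)$ permutations when $k=n$ (here $\sigma$ ranges over $S_{n-1}(231,321,2143)$) and exactly one permutation for each $k$ with $1\le k\le n-1$ (here $\sigma$ must be the increasing word). Writing $g(n)=|S_n(231,321,2143)|$, this yields the recursion $g(n)=g(n-1)+(n-1)$ with $g(0)=1$, whose solution is $g(n)=\binom{n}{2}+1$.

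The routine parts are the reduction and the two forcing arguments giving the block decomposition $\pi=\sigma\,n\,\tau$. The hard part will be the case analysis in the final step: verifying completely that no ``cross'' occurrence of $2143$ arises other than type (b), and that type (b) occurs precisely when $\sigma$ has a descent and $\tau$ is nonempty. Once that dichotomy is established, the recursion and its closed form are immediate.
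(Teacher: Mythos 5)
Your proof is correct, and while its first half coincides with the paper's, its second half takes a genuinely different route. The reduction is identical: the paper also invokes Lemma~\ref{lem:classical_pattern} and then discards $3142$ because it contains a $231$-pattern, arriving at $F_n(321,3142,2143)=S_n(231,321,2143)$. At that point, however, the paper simply cites Mansour (Theorem 3.6(2) of arXiv:math/9909019) for $|S_n(231,321,2143)|=\binom{n}{2}+1$, whereas you prove this count from scratch. Your decomposition $\pi=\sigma\,n\,\tau$ with $\tau=k(k+1)\cdots(n-1)$ increasing and every entry of $\sigma$ below every entry of $\tau$ is forced exactly as you say by $231$- and $321$-avoidance; your inventory of inversions (those inside $\sigma$, plus the pairs $(n,t)$ with $t\in\tau$) is complete; and your dichotomy is sound, since a pair $(n,t)$ cannot play the role of the ``21'' (nothing exceeds $n$) and two pairs $(n,t)$, $(n,t')$ share the position of $n$, so the only obstructions are a $2143$ inside $\sigma$ or a descent of $\sigma$ combined with a nonempty $\tau$. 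The recursion $g(n)=g(n-1)+(n-1)$ with $g(0)=1$ then gives $g(n)=\binom{n}{2}+1$. What each approach buys: the paper's proof is two lines but rests on an external, arXiv-only reference; yours is self-contained, and the block decomposition at the maximum makes the structure of $S_n(231,321)$ explicit, at the cost of carrying out the case analysis that the citation hides.
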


\begin{proof}
	By Lemma~\ref{lem:classical_pattern}, we have $F_n(321,3142,2143)=S_n(231,321,3142,2143)=S_n(231,321,2143)$. Here we can omit the 3142-pattern since it contains a 231-pattern. Mansour has proved in~\cite[Theorem 3.6(2)]{Mansour2018} that $|S_n(231,321,2143)|=\binom{n}{2}+1$.
\end{proof}

\subsection{Enumerating $F_{n}(321,2143,3124)$}
The main result of this subsection is as follows.
\begin{thm}\label{2143-3124}
	For $n\geq 0$,  $|F_{n}(321,2143,3124)|=\binom{n}{2}+1$.
\end{thm}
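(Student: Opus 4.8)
The plan is to follow the two-type strategy used throughout the paper: split $F_n(321,2143,3124)$ according to Lemma~\ref{lem-12} into the permutations with $\pi_1=1$ and those with $\pi_2=1$, and run an induction on $n$ whose inductive step is fed by a direct count of the second type. For the first type, since neither $2143$ nor $3124$ begins with $1$, the observation that $|F_n^{(1)}(321,\sigma)|=|F_{n-1}(321,\sigma)|$ for any pattern set $\sigma$ not beginning with $1$ gives at once $|F_n^{(1)}(321,2143,3124)|=|F_{n-1}(321,2143,3124)|$, which equals $\binom{n-1}{2}+1$ by the induction hypothesis. Thus the whole theorem reduces to establishing the second-type count $|F_n^{(2)}(321,2143,3124)|=n-1$ for $n\ge 2$, since then $\binom{n-1}{2}+1+(n-1)=\binom{n}{2}+1$; the base cases $n=0,1$ are immediate.

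For the second-type count I would write $\pi=k\,1\,\pi_3\cdots\pi_n$ with $k=\pi_1\in\{2,\ldots,n\}$ and show that each value of $k$ contributes exactly one admissible permutation, namely $\pi=k\,1\,(k+1)(k+2)\cdots n\,2\,3\cdots(k-1)$, giving the total $n-1$. Write $S=\{2,\ldots,k-1\}$ and $L=\{k+1,\ldots,n\}$ for the elements below and above $k$ occurring among $\pi_3,\ldots,\pi_n$. The key structural point is that the forced prefix $k\,1$ together with $3124$-avoidance pins down the coarse order: if any element of $S$ preceded any element of $L$, then $k$, $1$, that $S$-element, and that $L$-element would form a copy of $3124$, so every element of $L$ must occur before every element of $S$. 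Avoidance of $321$ (using the leading $k$) forces $S$ to be increasing; and since $S\neq\emptyset$ whenever $k\ge 3$, any descent inside $L$ followed by an element of $S$ would create a $321$, so $L$ is increasing as well, leaving the single candidate above. The two extreme cases use the remaining hypotheses: when $k=n$ we have $L=\emptyset$ and $321$-avoidance alone forces $\pi=n\,1\,2\cdots(n-1)$, while when $k=2$ we have $S=\emptyset$ and a descent inside $L$ together with the prefix $2\,1$ would form a $2143$, forcing $\pi=2\,1\,3\cdots n$.

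It then remains to confirm that each candidate is genuinely admissible, that is, that it avoids $2143$ and the Fishburn pattern $\pattern$; I expect this verification to be the main (though routine) obstacle, since so far $321$- and $3124$-avoidance were used only to narrow the shape. For $2143$ one notes that in $\pi=k\,1\,(k+1)\cdots n\,2\cdots(k-1)$ the larger entry of every inversion lies in $\{k\}\cup L$, whereas the smaller entry of every inversion is at most $k-1$; hence one cannot stack two inversions so that both entries of the later one exceed the larger entry of the earlier one, as $2143$ would require. For $\pattern$, apart from the ascent $1\,(k+1)$, to whose left no smaller value can play the role of the bottom of a $231$, every ascent $\pi_i<\pi_{i+1}$ sits inside one of the increasing runs $L$ or $S$ and satisfies $\pi_{i+1}=\pi_i+1$; a copy of $\pattern$ would then demand the value $\pi_i-1$ somewhere to the right of position $i+1$, but $\pi_i-1$ always occupies a position at or before $i$ in both runs, so the Fishburn equality $\pi_i=\pi_j+1$ can never be met. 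Combining the second-type count $n-1$ with the first-type reduction and the induction hypothesis yields $|F_n(321,2143,3124)|=\binom{n}{2}+1$, completing the proof.
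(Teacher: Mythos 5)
Your proposal is correct and takes essentially the same approach as the paper: split according to the position of $1$, handle the $\pi_1=1$ case by induction via $|F_{n}^{(1)}(321,2143,3124)|=|F_{n-1}(321,2143,3124)|$, and show that for $\pi_2=1$ each choice of $\pi_1=k$ forces the unique permutation $k\ 1\ (k+1)\cdots n\ 2\cdots (k-1)$, giving $|F_{n}^{(2)}(321,2143,3124)|=n-1$. The differences are minor: you force the run $k+1,\ldots,n$ to be increasing via $321$ (using an element of $\{2,\ldots,k-1\}$) where the paper invokes $2143$, and you go further than the paper by verifying admissibility of the candidates with respect to $2143$ and the Fishburn pattern (the paper omits all such checks as routine), though for strict completeness you should also note that the candidates avoid $321$ and $3124$.
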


\begin{prop}\label{2143-3124-p2=1}
	For $n\geq 2$, $|F_{n}^{(2)}(321,2143,3124)|=n-1.$
\end{prop}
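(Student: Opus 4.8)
The plan is to fix $\pi_1=k$; since $\pi_2=1$ by definition of $F_n^{(2)}$, we have $2\le k\le n$, and I will show that the remaining entries are completely forced, so that each value of $k$ contributes exactly one permutation. Call the entries larger than $k$ the \emph{large} elements $\{k+1,\ldots,n\}$ and the entries in $\{2,\ldots,k-1\}$ the \emph{small} elements. First I would argue that the large elements must appear in increasing order: if $a>b$ were two large elements with $a$ to the left of $b$, then $k,1,a,b$ would be a copy of $2143$. Next, the small elements must also be increasing, since any descending pair of small elements together with the leading $k$ would be a copy of $321$. Finally, every large element must precede every small element: if some small element $m$ occurred to the left of some large element $\ell$, then $k,1,m,\ell$ would be a copy of $3124$. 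Combining these three observations forces
\[
\pi=k\ 1\ (k+1)(k+2)\cdots n\ 2\ 3\cdots(k-1),
\]
where the block of large (resp.\ small) elements is understood to be empty when $k=n$ (resp.\ $k=2$).

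It then remains to confirm that this single candidate really lies in $F_n^{(2)}(321,2143,3124)$ for every $k$. Note that the three observations above are only necessary conditions, and since they did not invoke $\pattern$-avoidance, I must still check that the candidate is a genuine Fishburn permutation. Each verification is a short inspection of the rigid shape of $\pi$: its only descents are $k\to 1$ and (when both blocks are nonempty) $n\to 2$, while its small and large blocks are each increasing. A copy of $321$ would require a descending pair to the right of, and below, some entry $\pi_i$, but the entries that are smaller than $\pi_i$ and lie to its right always form an increasing run; a copy of $2143$ would require two descending pairs with the first entirely below the second, which the increasing blocks do not admit; a copy of $3124$ would require an entry followed by two increasing entries below it and then a still larger entry, which the ``large-before-small'' structure rules out; and a copy of $\pattern$ would require an ascent $\pi_i<\pi_{i+1}$ with the value $\pi_i-1$ occurring later, whereas in $\pi$ that value always sits to the left of such an ascent.

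Summing over the admissible values of $k$ then yields exactly one permutation for each $k\in\{2,\ldots,n\}$, giving $|F_n^{(2)}(321,2143,3124)|=n-1$, and I would close by checking the small cases directly to anchor the argument. The main obstacle I anticipate is not the counting but the forcing step: one must be sure the three pattern conditions pin down a \emph{unique} arrangement and that no extra freedom hides in the boundary cases $k=2$ and $k=n$. In particular, the naive reduction used elsewhere in the case $k=2$ (treating $2\,1$ as a single smallest element) is misleading here, because the pair $2\,1$ can itself serve as the leading $21$ of a $2143$ pattern; this interaction is precisely what collapses the count for $k=2$ to a single permutation, so it is worth flagging explicitly rather than appealing to a reduction to $F_{n-2}$.
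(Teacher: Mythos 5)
Your proof is correct and follows essentially the same route as the paper: classify by $\pi_1=k$, use the patterns $321$, $2143$, and $3124$ to force the unique form $k\ 1\ (k+1)\cdots n\ 2\cdots(k-1)$, and count one permutation for each $k\in\{2,\ldots,n\}$. The only differences are cosmetic: you treat $k=2$ and $k=n$ uniformly rather than as separate cases, and you explicitly verify sufficiency (that each candidate avoids all three classical patterns and the Fishburn pattern), a check the paper leaves implicit.
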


\begin{proof}
We can easily verify the formula for $n\leq 3$.
Now let $n\geq4$. Assume that $\pi_1=k$ with $2\leq k\leq n$.
	\begin{enumerate}[1)]
		\item If $k=2$, avoiding 2143 forces the elements $3,4,\ldots,n$ to the right of $2\ 1$ to be in increasing order. So $\pi=2\ 1\ 3\cdots n $.
		\item For $3\leq k\leq n-1$, we claim that $$\pi=k\ 1\ 2\cdots k-1\ k+1\cdots n.$$
		First,  the elements $2,\ldots,k-1$ (appearing after $k$) and $k+1,\ldots,n$ (appearing after $k$ and 1) must be in increasing order to avoid 321 and 2143, respectively.
		Second, to avoid 3124, the elements $k+1,\ldots,n$ (playing the role of 4) must be to the left of $2,\ldots,k-1$ (playing the role of 2). The form of $\pi$ in this case is  \[
		k\ 1\ k+1\cdots n\ 2\cdots k-1.
		\]
		\item If $k=n$, then  to avoid 321, the elements $2,\ldots,n-1$ must be in increasing order. So $\pi=n\ 1\ 2\cdots n-1$.
	\end{enumerate}
	Fixing $k$, we can uniquely determine a unique  permutation. The number of permutations, which depends on $k$ ranging from 2 to $n$, is $n-1$.

	Therefore, for $n\geq 2$, we have $$|F_{n}^{(2)}(321,2143,3124)|=n-1.$$

    \vspace{-1cm}
\end{proof}

\begin{proof}[Proof of Theorem \ref{2143-3124}]

	The cases when $n<2$ can be easily verified.
		For $n\geq 2$, we have
		\[|F_{n}^{(1)}(321,2143,3124)|=|F_{n-1}(321,2143,3124)|=\binom{n-1}{2}+1.\]
	Thus, it follows from  Proposition~\ref{2143-3124-p2=1} that
		\[
		\begin{aligned}
			|F_{n}(321,2143,3124)|=&\, |F_{n}^{(1)}(321,2143,3124)|+|F_{n}^{(2)}(321,2143,3124)|
			\\=&\Big(\binom{n-1}{2}+1\Big)+(n-1)\\
			=&\binom{n}{2}+1.
		\end{aligned}
		\]

	    \vspace{-1cm}
	\end{proof}

\subsection{Enumerating $F_{n}(321,2143,4123)$}
The main result of this subsection is as follows.
\begin{thm}\label{2143-4123}
	For $n\geq 0$,  $|F_{n}(321,2143,4123)|=\binom{n}{2}+1$.
\end{thm}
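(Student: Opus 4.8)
The plan is to follow the same two-type decomposition used throughout the paper, splitting $F_n(321,2143,4123)$ according to whether $\pi_1=1$ or $\pi_2=1$, which is justified by Lemma~\ref{lem-12}. Since the pattern set $\{2143,4123\}$ consists of patterns that do not begin with $1$, the observation recorded just before Section~\ref{sec2} gives immediately that
\[
|F_n^{(1)}(321,2143,4123)| = |F_{n-1}(321,2143,4123)|.
\]
Thus the recursive piece is handled for free, and the entire problem reduces to pinning down $|F_n^{(2)}(321,2143,4123)|$, the permutations with $\pi_2=1$. If I can show that this count equals $n-1$, then exactly as in the proof of Theorem~\ref{2143-3124} an easy induction on the recurrence $a_n = a_{n-1} + (n-1)$ with base case $a_0=1$ yields $a_n = \binom{n}{2}+1$.

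The main work, therefore, is the proposition $|F_n^{(2)}(321,2143,4123)| = n-1$, and I would prove it by writing $\pi_1=k$ and analyzing the three ranges of $k$ exactly as in Proposition~\ref{2143-3124-p2=1}. For $k=2$, merging $2$ and $1$ and invoking $2143$-avoidance forces $\pi = 2\,1\,3\cdots n$, a single permutation. For $k=n$, avoiding $321$ forces the tail $2,\ldots,n-1$ to be increasing, giving the single permutation $\pi = n\,1\,2\cdots n-1$. The crux is the generic range $3\le k\le n-1$, where I expect to show that $k$ again determines a \emph{unique} permutation. As before, $321$-avoidance makes $2,\ldots,k-1$ increasing wherever they sit and $2143$-avoidance makes $k+1,\ldots,n$ increasing; the decisive constraint is now the pattern $4123$ rather than $3124$. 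Here $k$ plays the role of the leading ``$4$'', so I would argue that $4123$-avoidance forbids the configuration in which $2,\ldots,k-1$ (the ``$1\,2\,3$'') all appear to the right of $k$ in increasing order while some larger element lies further right; more precisely, the large elements $k+1,\ldots,n$ must be placed so that no ``$123$'' ascending triple survives to the right of $k$ together with an even larger entry, which together with the Fishburn condition should collapse the placement to one admissible arrangement.

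The delicate step is verifying that the interaction of $4123$-avoidance with the Fishburn pattern $\pattern$ and $2143$-avoidance leaves no freedom in the middle range, rather than the one-parameter family of $n-k$ permutations that appeared in the $1423$ analysis of Theorem~\ref{1423-2143}. I would handle this by the same region-diagram bookkeeping used in Lemma~\ref{lem:classical_pattern}: fix the descent $k\,1$ at the front, partition the remaining positions by whether entries are below $k$ or above $k$, and rule out each nontrivial interleaving by exhibiting an explicit forbidden copy of $321$, $2143$, $4123$, or $\pattern$. The expected obstacle is precisely the case where several elements of $\{k+1,\ldots,n\}$ are split across the small block $2,\ldots,k-1$; I anticipate that any such split produces either a $4123$ (using $k$, two small elements, and a large element) or a $2143$ (using two large and two small elements), forcing all large elements to one side and yielding uniqueness. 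Once uniqueness for each $k\in\{2,\ldots,n\}$ is established, summing gives $n-1$ directly.

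\begin{proof}[Proof of Theorem \ref{2143-4123}]
The cases $n<2$ are immediate. For $n\ge 2$ we have
\[
|F_{n}^{(1)}(321,2143,4123)| = |F_{n-1}(321,2143,4123)|,
\]
since neither $2143$ nor $4123$ begins with $1$. Combining this with the value $|F_{n}^{(2)}(321,2143,4123)| = n-1$ and arguing by induction as in the proof of Theorem~\ref{2143-3124}, we obtain
\[
|F_{n}(321,2143,4123)| = \Big(\tbinom{n-1}{2}+1\Big) + (n-1) = \binom{n}{2}+1.
\]
\end{proof}
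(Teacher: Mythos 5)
Your reduction is fine: splitting by Lemma~\ref{lem-12}, noting $|F_n^{(1)}(321,2143,4123)|=|F_{n-1}(321,2143,4123)|$ since neither extra pattern begins with $1$, and inducting once $|F_n^{(2)}(321,2143,4123)|=n-1$ is known is exactly the paper's outer argument. But your sketched proof of that key proposition is structurally wrong, because you modeled it on the $3124$ case, and $4123$ behaves completely differently. First, your claim that $k=n$ contributes the permutation $n\,1\,2\cdots(n-1)$ is false for $n\geq 4$: the entries $n,1,2,3$ form a copy of $4123$, so this permutation is not in the set at all. Second, your claim that each $k$ with $3\le k\le n-1$ determines a unique permutation is also false, in both directions: for every $k\geq 4$, avoidance of $321$ forces $k-2$ to precede $k-1$ among the entries after the $1$, and then $k,1,k-2,k-1$ is a copy of $4123$, so the whole range $k\geq 4$ contributes \emph{zero} permutations; whereas for $k=3$, avoidance of $2143$ forces $4,5,\ldots,n$ to be increasing, and the element $2$ can then be inserted into any of the $n-2$ sites of $3\,1\,4\,5\cdots n$ (each insertion is easily checked to avoid all three patterns and the Fishburn pattern), so $k=3$ contributes $n-2$ permutations, not one.

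So the correct breakdown is $1+(n-2)+0=n-1$ coming from $k=2$, $k=3$, and $k\geq 4$ respectively, which is the paper's actual proof (Proposition~\ref{2143-4123-2}: first show $\pi_1<4$, then count the two surviving cases). Your accounting $1+(n-3)+1$ happens to sum to the same number, but the two middle claims it rests on are each false, and the ``region bookkeeping'' you anticipate would collapse the middle range to uniqueness cannot succeed, since for $4\le k\le n-1$ there is nothing there to count. As written, the proposal does not constitute a proof of the proposition, and hence not of the theorem.
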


	\begin{prop}\label{2143-4123-2}
	For $n\geq 2$, $|F_{n}^{(2)}(321,2143,4123)|=n-1$.
\end{prop}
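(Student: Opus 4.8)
The plan is to follow exactly the same structure used in the preceding propositions of this type (for instance Proposition~\ref{2143-3124-p2=1}), namely to fix $\pi_1=k$ for a permutation $\pi\in F_n^{(2)}(321,2143,4123)$, to show that for each admissible value of $k$ the permutation is essentially forced, and then to count. First I would handle the small cases $n\leq 3$ by direct inspection. For $n\geq 4$, write $\pi=k\ 1\ \pi_3\cdots\pi_n$ with $\pi_2=1$ and range over $2\leq k\leq n$.

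Next I would dispose of the two extreme values of $k$ just as before. When $k=2$, avoidance of $2143$ forces $3,4,\ldots,n$ to the right of $2\ 1$ to be increasing, so $\pi=2\ 1\ 3\cdots n$. When $k=n$, avoidance of $321$ forces $2,\ldots,n-1$ to be increasing, so $\pi=n\ 1\ 2\cdots n-1$. Each of these contributes a single permutation.

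The heart of the argument is the generic range $3\leq k\leq n-1$, where I must show that again exactly one permutation arises. As in the analogous proposition, avoiding $321$ makes the small elements $2,\ldots,k-1$ increasing, and avoiding $2143$ makes the large elements $k+1,\ldots,n$ increasing. The new ingredient is the pattern $4123$ in place of $3124$: here the large block $k+1,\ldots,n$ plays the role of the leading ``4'' and any two of the small elements $2,\ldots,k-1$ together with a further small element would play ``123'', so I expect $4123$-avoidance to force the relative placement of the two increasing blocks, pinning down a single arrangement. Concretely, I would argue that if any element of $\{k+1,\ldots,n\}$ appeared before two elements of $\{2,\ldots,k-1\}$ we would obtain a copy of $4123$ (using $k$ or a large element as the ``4''), which combined with the $321$- and $2143$-constraints leaves precisely one interleaving; this is the step I expect to be the main obstacle, since the $4123$ and $2143$ constraints interact and one must check no valid permutation is missed and none is double-counted.

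Having shown that each $k$ in $\{2,\ldots,n\}$ yields exactly one permutation, the count follows immediately: summing $1$ over the $n-1$ admissible values of $k$ gives $|F_n^{(2)}(321,2143,4123)|=n-1$. The final theorem $|F_n(321,2143,4123)|=\binom{n}{2}+1$ will then follow, exactly as in Theorem~\ref{2143-3124}, by combining this proposition with the observation $|F_n^{(1)}(321,2143,4123)|=|F_{n-1}(321,2143,4123)|=\binom{n-1}{2}+1$ (since $4123$ does not begin with $1$) and using $\binom{n-1}{2}+1+(n-1)=\binom{n}{2}+1$.
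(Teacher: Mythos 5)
Your proposed case structure is wrong, and the error is exactly at the step you flagged as the ``main obstacle.'' The pattern $4123$ kills every value $k\geq 4$ outright: if $\pi_1=k\geq 4$ and $\pi_2=1$, then the two elements $k-2$ and $k-1$ both lie to the right of $1$, and avoiding $321$ forces them to appear in increasing order, so $k,\,1,\,k-2,\,k-1$ is a copy of $4123$. In particular your case $k=n$ contributes \emph{zero} permutations, not one: the permutation $n\ 1\ 2\cdots n-1$ that you claim is valid contains $n,1,2,3$, which is a copy of $4123$. The same argument eliminates every $k$ with $4\leq k\leq n-1$, so your claim that ``each $k$ in $\{2,\ldots,n\}$ yields exactly one permutation'' is false on both ends: most values of $k$ yield nothing, while $k=3$ yields $n-2$ permutations rather than one. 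Indeed, for $k=3$ the constraint $2143$ forces $4,5,\ldots,n$ to be increasing, but the element $2$ is then free to sit in any of the $n-2$ sites of $3\ 1\ ^{1}\ 4\ ^{2}\ 5\cdots n\ ^{n-2}$, and all of these insertions are valid (one checks no pattern is created). The paper's proof is: only $k\in\{2,3\}$ can occur, with $k=2$ giving the single permutation $2\ 1\ 3\cdots n$ and $k=3$ giving $n-2$ permutations, so the total is $1+(n-2)=n-1$.

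The fact that your total $n-1$ agrees numerically with the correct answer is a coincidence of arithmetic ($n-1$ values of $k$ times one permutation each, versus $1+(n-2)$); it does not rescue the argument, since the underlying claims about which permutations exist are false. Your final paragraph (deducing the theorem from the proposition together with $|F_{n}^{(1)}(321,2143,4123)|=|F_{n-1}(321,2143,4123)|$) is fine and matches the paper, but the proposition itself needs the correct dichotomy $k\in\{2,3\}$ as its backbone.
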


\begin{proof}
	It is trivial to show that the proposition holds for $n=2, 3$.
	Now we consider $n\geq4$. Let $\pi_1=k$. We first prove that $k<4$. If $k\geq 4$, then both $k-1$ and $k-2$ are located to the right of 1. The avoidance of 321 implies that $k-2$ and $k-1$ are in increasing order. Then $k1(k-2)(k-2)$ would lead to an occurrence of 4123, which is a contradiction. Therefore, we conclude that either $k=2$ or $k=3$.
	\begin{itemize}
		\item If $k=2$, to avoid 2143, the elements $3,4,\ldots,n$ must be arranged in increasing order. Hence, $\pi=2\ 1\ 3\cdots n $.
		\item If $k=3$, to avoid 2143, the elements $4,\ldots,n$ must be in increasing order. Let \[
		\pi^{\prime}=3\ 1\ ^{1}\ 4\ ^{2}\ 5\cdots n\ ^{n-2}.
		\]  It is easy to verify that 2 can be inserted into each of the $n-2$ sites of $\pi^{\prime}$.
		So the number of permutations in this case is $n-2$.
	\end{itemize}
Therefore, for $n\geq 4$,
	$$|F_{n}^{(2)}(321,2143,4123)|=1+(n-2)=n-1.$$

    \vspace{-1cm}

    \ \ \
\end{proof}

\begin{proof}[Proof of Theorem \ref{2143-4123}]
We can easily verify the formula for $n<2$.
	For $n\geq 2$, we get by induction that\[
	|F_{n}^{(1)}(321,2143,4123)|=|F_{n-1}(321,2143,4123)|=\binom{n-1}{2}+1.
	\]
	Thus, we obtain
	\[
	\begin{aligned}
		|F_{n}(321,2143,4123)|=&|F_{n}^{(1)}(321,2143,4123)|+|F_{n}^{(2)}(321,2143,4123)|
		\\=&\Big(\binom{n-1}{2}+1\Big)+(n-1)\\
		=&\binom{n}{2}+1.
	\end{aligned}
	\]

    \vspace{-1cm}

    \ \ \
\end{proof}

\subsection{Enumerating $F_{n}(321,1423,3124)$}
In this subsection and the following two subsections, we will be working with the well-known Fibonacci numbers, see A000045 in OEIS~\cite{oeis}. The Fibonacci numbers, denoted by $F_{n}$, satisfy the initial conditions $F_{0}=F_1=1$ and recurrence relation $F_{n}=F_{n-1}+F_{n-2}$ for $n\geq 2$.

It is worth mentioning that, when counting pattern-avoiding Fishburn permutations involving Fibonacci numbers, we consider the first $k$ elements of $\pi$ (the rearrangement of $1\ 2\cdots\ k$) of some kind of classifications, as a single element playing the role of 1.
In this way, we can  simplify the enumeration process by focusing on permutations of smaller lengths. These permutations can then be transformed into the enumeration problem that has been previously established, allowing for easier enumeration and induction.

The main result of this subsection is as follows.
\begin{thm}\label{1423-3124}
	For $n\geq 4$,  $$|F_{n}(321,1423,3124)|=F_{n}+2.$$
\end{thm}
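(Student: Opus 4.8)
The plan is to split $F_n(321,1423,3124)$ according to Lemma~\ref{lem-12} into the two subsets $F_n^{(1)}$ (those with $\pi_1=1$) and $F_n^{(2)}$ (those with $\pi_2=1$), count each separately, and add. For $n\ge 4$ I expect $|F_n^{(1)}(321,1423,3124)|=F_{n-1}$ and, as a separate proposition, $|F_n^{(2)}(321,1423,3124)|=F_{n-2}+2$, so that the total is $F_{n-1}+F_{n-2}+2=F_n+2$ by the Fibonacci recurrence.

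For the $F_n^{(1)}$ count I would use the observation recorded just before the section. Since $1423=1\oplus 312$, deleting the leading $1$ and reducing gives a bijection between $F_n^{(1)}(321,1423,3124)$ and $F_{n-1}(321,312,3124)$: the leading $1$ can only play the smallest-and-leftmost role, so it participates in no $321$, no $3124$, and no copy of the Fishburn pattern~$\pattern$, while a $1423$ is present exactly when the tail contains a $312$. Because $3124$ itself contains $312$ (its first three letters), avoiding $312$ already forces avoidance of $3124$, so $F_{n-1}(321,312,3124)=F_{n-1}(321,312)$, and Lemma~\ref{lem:321-312} gives $|F_{n-1}(321,312)|=F_{n-1}$.

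The heart of the argument is the proposition $|F_n^{(2)}(321,1423,3124)|=F_{n-2}+2$, which I would prove by fixing $\pi_1=k$ and analysing the value of $k$. When $k=2$ I would contract the initial descent $2\,1$ into a single smallest letter; one checks that neither $1$ nor $2$ can occur in any copy of $321$, $1423$, $3124$, or~$\pattern$ (each such role would force a strictly smaller or a strictly earlier element than is available at the front), so this contraction is a bijection onto $F_{n-1}^{(1)}(321,1423,3124)$ and contributes $F_{n-2}$. For $4\le k\le n-1$ the set of small letters $\{2,\dots,k-1\}$ has at least two elements and must be increasing to avoid $321$, while avoiding $3124$ through the copy $k\,1\,m\,M$ (with $m$ small and $M>k$) forces every letter larger than $k$ to precede every small letter; but then $1$, such a large letter, and two small letters form a $1423$, so this range contributes nothing. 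Finally $k=3$ leaves the single permutation $3\,1\,4\,5\cdots n\,2$, and $k=n$ leaves the single permutation $n\,1\,2\cdots (n-1)$, each of which is readily checked to satisfy all four conditions; together they contribute $2$. Summing over $k$ yields $F_{n-2}+2$.

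I expect the main obstacle to be the $k=2$ contraction in the proposition: one must verify carefully that replacing the block $2\,1$ by a single letter is a genuine bijection, i.e. that every occurrence of $321$, $1423$, $3124$, or the Fishburn pattern in $\pi$ avoids both front letters and therefore corresponds to an occurrence in the contracted permutation, and conversely. The remaining case analysis for $3\le k\le n$ is elementary. After establishing the two counts, the theorem follows for $n\ge 4$ by adding $|F_n^{(1)}|=F_{n-1}$ and $|F_n^{(2)}|=F_{n-2}+2$; the restriction $n\ge 4$ is needed because at $n=3$ the two exceptional permutations $3\,1\,4\cdots n\,2$ and $n\,1\,2\cdots (n-1)$ coincide.
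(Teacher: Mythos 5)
Your proposal is correct and follows essentially the same route as the paper: the same split via Lemma~\ref{lem-12}, the same identification $|F_n^{(1)}(321,1423,3124)|=|F_{n-1}(321,312)|=F_{n-1}$ using $1423=1\oplus 312$ and Lemma~\ref{lem:321-312}, and the same case analysis on $\pi_1=k$ for $F_n^{(2)}$ giving $F_{n-2}+1+1$. The only cosmetic difference is how the range $4\le k\le n-1$ is killed: you use 3124-avoidance to force all large letters before all small ones and then exhibit a 1423, while the paper tracks where $n$ can be inserted and finds a 1423 or 3124 either way --- the same contradiction in slightly different clothing.
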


 \begin{lem}\label{lem:321-312}
 	For $n\geq 2$, we get $$|F_{n}(321,312)|=F_{n}.$$
 \end{lem}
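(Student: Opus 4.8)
The plan is to reduce the problem to a classical triple-pattern count and then set up a Fibonacci recurrence using the position of the element $1$, exactly in the style already established in the paper. By Lemma~\ref{lem:classical_pattern} (with $\sigma=312$) we have $F_n(321,312)=S_n(231,321,312)$, so it suffices to enumerate permutations avoiding $231$, $321$, and $312$ simultaneously; the extra weapon this gives us, beyond $321$- and $312$-avoidance, is that $\pi$ also avoids $231$. Since every $\pi\in F_n(321,312)\subseteq F_n(321)$ satisfies $\pi_1=1$ or $\pi_2=1$ by Lemma~\ref{lem-12}, I would split
\[
|F_n(321,312)|=|F_n^{(1)}(321,312)|+|F_n^{(2)}(321,312)|
\]
and aim to show these two summands equal $|F_{n-1}(321,312)|$ and $|F_{n-2}(321,312)|$ respectively, which yields the Fibonacci recurrence.

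The first summand is immediate from the observation recorded before Section~\ref{sec2}: because $312$ does not begin with $1$, we have $|F_n^{(1)}(321,312)|=|F_{n-1}(321,312)|$. The heart of the argument is the second summand. Here I would write $\pi_1=k$, $\pi_2=1$, and first force $k=2$: if $k\geq 3$ then the value $2$ must occur at some position $\geq 3$ (it cannot be $\pi_1=k$ nor $\pi_2=1$), and then $k,1,2$ forms a copy of $312$, a contradiction. Hence $\pi_1=2$ and $\pi_2=1$, and $\pi_3\cdots\pi_n$ is an arrangement of $\{3,\ldots,n\}$. Treating the prefix $2\,1$ as a single element playing the role of $1$, $\pi_3\cdots\pi_n$ is order-isomorphic to a member of $F_{n-2}(321,312)$, and this correspondence is a bijection, so $|F_n^{(2)}(321,312)|=|F_{n-2}(321,312)|$.

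I expect the main obstacle to be the routine but necessary verification that prepending $2\,1$ creates no new forbidden pattern, so that the reduction to $F_{n-2}$ is genuinely a bijection: one checks that in any triple involving the value $2$ or the value $1$ together with two later (hence larger) entries, the value $2$ can only play the role of the smallest-or-second-smallest entry and $1$ the role of the minimum, so no copy of $231$, $321$, or $312$ can use either of them. With that in hand, combining the two summands gives
\[
|F_n(321,312)|=|F_{n-1}(321,312)|+|F_{n-2}(321,312)|\qquad(n\geq 2).
\]
Finally I would check the base cases $|F_0(321,312)|=1$ and $|F_1(321,312)|=1$ directly; since these match $F_0=F_1=1$ and the recurrence is precisely $F_n=F_{n-1}+F_{n-2}$, induction gives $|F_n(321,312)|=F_n$ for all $n\geq 1$. (As an alternative derivation of the same recurrence, one could instead track the position of $n$: avoidance of $321$ and $312$ forces $n$ into the last or second-to-last position, and in the latter case avoidance of $231$ forces the final entry to be $n-1$, again splitting the count as $F_{n-1}+F_{n-2}$.)
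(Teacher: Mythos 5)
Your proposal is correct, but it takes a genuinely different route from the paper. The paper's proof is a two-line reduction-plus-citation: Lemma~\ref{lem:classical_pattern} gives $F_n(321,312)=S_n(231,321,312)$, and then the count $|S_n(231,321,312)|=F_n$ is quoted from Simion and Schmidt \cite[Lemma 6 (a)]{Simion1985}, exactly as the paper handles the companion lemmas for $132$, $213$, and $3142$. You instead rederive the Fibonacci recurrence from scratch: splitting by the position of $1$ via Lemma~\ref{lem-12}, getting $|F_n^{(1)}(321,312)|=|F_{n-1}(321,312)|$ from the standing observation (since $312$ does not begin with $1$), and then forcing $\pi_1=2$ in the $\pi_2=1$ case (any $\pi_1=k\geq 3$ would make $k,1,2$ a copy of $312$) so that the prefix $2\,1$ acts as a block and $|F_n^{(2)}(321,312)|=|F_{n-2}(321,312)|$. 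Your straddling-pattern check is sound, and in fact your main argument never uses $231$-avoidance at all, so the appeal to Lemma~\ref{lem:classical_pattern} is not load-bearing in your write-up: the recurrence argument works directly inside $F_n(321,312)$, making the citation of Simion--Schmidt entirely dispensable. What each approach buys: the paper's proof is shorter and uniform with its treatment of the other length-3 patterns, while yours is self-contained, exposes why the Fibonacci structure appears (the $1$- and $21$-prefixes giving the $F_{n-1}+F_{n-2}$ split), and uses only machinery already developed in the paper, in the same style as the proofs of Theorems~\ref{1423-4123} and~\ref{3124-4123}.
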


\begin{proof}
By Lemma~\ref{lem:classical_pattern}, we have $|F_{n}(321,312)|=|S_n(231,321,312)|$. In \cite[Lemma 6 (a)]{Simion1985}, the set $S_n(231,321,312)$ has been proved to be enumerated by the Fibonacci numbers $F_n$. So we get $|F_{n}(321,312)|=F_{n}$.
\end{proof}

\begin{prop}
	For $n\geq 4$, $$|F_{n}^{(2)}(321,1423,3124)|=F_{n-2}+2.$$
\end{prop}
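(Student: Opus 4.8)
The plan is to compute $|F_n^{(2)}(321,1423,3124)|$ by fixing $\pi_1=k$ and analyzing the structure imposed on the remaining elements, following the same casework strategy used in the preceding propositions. We write $\pi = k\,1\,\pi_3\cdots\pi_n$. Avoiding $321$ forces the elements $2,\ldots,k-1$ (all lying to the right of $k$) to appear in increasing order, and likewise any elements of $\{k+1,\ldots,n\}$ lying to the right of $k$ must be increasing among themselves. The two size-$4$ patterns then constrain how the block $\{k+1,\ldots,n\}$ interleaves with the block $\{2,\ldots,k-1\}$: the pattern $3124$ (with $k$ as the $3$, $1$ as the $1$, a small element as the $2$, and a large element as the $4$) controls whether a large element may sit to the right of a small one, while $1423$ (with $1$ as the $1$) controls the reverse interleaving. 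I expect these two constraints to pin down the permutation almost completely once $k$ and one further parameter are chosen, exactly as in the $1423,2143$ case.

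The key idea, flagged in the remark preceding the statement, is to treat an initial increasing run as a single element playing the role of $1$ and thereby reduce to a smaller already-solved count; the target $F_{n-2}+2$ strongly suggests the recursion will bottom out at $|F_m(321,312)| = F_m$ from Lemma~\ref{lem:321-312}. Concretely, I would isolate the boundary cases $k=2$ and the large values $k=n,\,n-1$ (and perhaps $k=n-2$) as explicit, essentially unique configurations contributing the additive constant, and treat the generic range $3\le k\le n-2$ as the part that carries the Fibonacci growth. For $k=2$ I would collapse the prefix $2\,1$ into a single smallest element and invoke the appropriate reduced count; I expect this branch, or one closely analogous to it, to produce a term of the form $|F_{n-?}(321,312)|$ and hence a Fibonacci number, which after summation over $k$ telescopes via the Fibonacci recurrence $F_m=F_{m-1}+F_{m-2}$ into $F_{n-2}$ plus the constant.

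The main obstacle will be correctly identifying, for each $k$ in the generic range, exactly how many distinct legal interleavings of $\{k+1,\ldots,n\}$ with $\{2,\ldots,k-1\}$ survive all three avoidance conditions simultaneously, and then verifying that the resulting per-$k$ counts sum to a Fibonacci number rather than a polynomial. This is delicate because $1423$ and $3124$ pull in opposite directions on the interleaving, so I must check carefully that no additional families (e.g.\ placing several large elements to the right of $k-1$, or splitting the large block across two insertion sites) are admissible. The cleanest route, which I would aim for, is to set up a bijection or a direct recurrence on $n$ by conditioning on the position of the largest element $n$ or on the size of the initial block, showing that each $321,1423,3124$-avoiding Fishburn permutation with $\pi_2=1$ either removes a fixed prefix to land in $F_{n-1}^{(2)}$ or in $F_{n-2}^{(2)}$; matching this recurrence against $F_{n-2}+2$ and checking the small-$n$ base cases directly (as the other propositions do for $n\le 5$) would complete the argument.
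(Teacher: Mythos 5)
Your plan has the right outer shell (casework on $\pi_1=k$, collapsing the prefix $2\,1$, reduction to $|F_m(321,312)|=F_m$ via Lemma~\ref{lem:321-312}), but it is built on an inverted picture of where the count comes from, and the missing step is exactly the one that matters. The paper's proof hinges on showing that for $4\leq k\leq n-1$ there are \emph{no} permutations at all in $F_n^{(2)}(321,1423,3124)$: with the elements $2,\ldots,k-1$ forced into increasing order by 321-avoidance, the element $n$ has nowhere to go, since inserting it immediately after $1$ makes $1,n,2,k-1$ a copy of $1423$, while inserting it anywhere to the right of $2$ makes $k,1,2,n$ a copy of $3124$. Hence $k\in\{2,3,n\}$; the branch $k=2$ alone carries the Fibonacci number ($2\,1$ collapses to a single smallest element, giving $|F_{n-2}(321,312)|=F_{n-2}$), and $k=3$, $k=n$ contribute exactly one permutation each ($3\,1\,4\cdots n\,2$ and $n\,1\,2\cdots n-1$), giving the $+2$. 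Your proposal predicts the opposite structure: that the ``generic range'' $3\le k\le n-2$ carries Fibonacci growth that telescopes over $k$, and that the boundary values of $k$ supply the additive constant. That prediction is false --- the per-$k$ counts in the generic range are $0$ except for the single permutation at $k=3$ --- so the summation you intend to perform never materializes, and nothing in your plan would reveal the emptiness of the middle range.

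Your fallback ``cleanest route'' also fails, for a concrete numerical reason: the sequence $a_n=F_{n-2}+2$ does not satisfy $a_n=a_{n-1}+a_{n-2}$ (the right-hand side equals $F_{n-2}+4$), so there can be no bijection or direct recurrence sending $F_n^{(2)}(321,1423,3124)$ onto the disjoint union of $F_{n-1}^{(2)}$ and $F_{n-2}^{(2)}$ as you suggest. The Fibonacci recursion lives only inside the $k=2$ branch, after the two sporadic permutations have been split off; any correct proof must first isolate them, which again requires the excluded-middle-range argument you are missing.
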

\begin{proof}
	Suppose $\pi_1=k.$ We claim the value of $k$ can be chosen from $\{2,3,n\}$. If not, we assume that $4\leq k\leq n-1$. In this case, there are at least two elements less than $k$ to the right of $k\ 1$ and they cannot contain descents to avoid 321. Let  $$\pi^{\prime}=k\ 1\ ^{1}\ 2\ ^{2} \cdots k-1\ ^{k-1}\cdots.$$ We consider which site after 1 we can insert $n$ into. If $n$ is inserted into the first site, then $1,n,2,k-1$ form a copy of 1423.
	   Whereas if $n$ is inserted into some site to the right of 2, then $k,1,2,n$ form a copy of 3124, which is a contradiction. So either $k<4$ or $k=n$.
	\begin{enumerate}[1)]
		\item $k=2$. The elements 2,1 can be seen as one element playing the role of 1. { For the same reason as in the later proof of Theorem \ref{1423-3124},} the number of permutations in this case is \[
		|F_{n-2}(321,312)|=F_{n-2}.
		\]
		\item $k=3$. First we have $\pi_{n}=2$. If there is an element $x\geq4$ to the right of 2, then $3,1,2,x$ form a copy of 3124. Second, avoiding 321 forces $4,\ldots,n$ to the left of 2 must be in increasing order. So\[
		\pi=3\ 1\ 4\cdots n\ 2.
		\]\item $k=n$. To avoid 321, the elements $2,\ldots, n-1$ must be in increasing order. So \[
		\pi =n\ 1\ 2\cdots n-1.
		\]
	\end{enumerate}
	Thus, $$|F_{n}^{(2)}(321,1423,3124)|=F_{n-2}+1+1=F_{n-2}+2.$$

    \vspace{-1cm}

    \ \ \
\end{proof}

\begin{proof}[Proof of Theorem \ref{1423-3124}]
	Let $\pi\in F_{n}^{(1)}(321,1423,3124)$ with $\pi=1\oplus\tau$. We claim $\tau\in F_{n-1}(321,312)$. Indeed, the  avoidance of 1423 implies that $\tau$ avoids 312.  This is because if $ \tau $ contained a 312-pattern, then when we add the element 1 as the first element of $ \tau $, it would create a 1423-pattern. Similarly, $\pi$ avoiding 3124 means $\tau$ avoiding 3124 since the element 1 cannot be involved in every copy of 3124-pattern. Moreover, if there is no 312-pattern in $ \tau $, it also implies that there is no 3124-pattern in $ \tau $ since a 3124-pattern contains a 312-pattern. Therefore,\[
|F_{n}^{(1)}(321,1423,3124)|=|F_{n-1}(321,312)|=F_{n-1}.
\]
	For $n\geq 4$, we have
	\[
	\begin{aligned}
		|F_{n}(321,1423,3124)|=&|F_{n}^{(1)}(321,1423,3124)|+|F_{n}^{(2)}(321,1423,3124)|
		\\=&F_{n-1}+(F_{n-2}+2)\\
		=&F_{n}+2.
	\end{aligned}
	\]

    \vspace{-1cm}
\end{proof}

\subsection{Enumerating $F_{n}(321,1423,4123)$}
The main result of this subsection is as follows.
\begin{thm}\label{1423-4123}
	For $n\geq 1$,  $$|F_{n}(321,1423,4123)|=F_{n+1}-1.$$
\end{thm}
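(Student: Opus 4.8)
The plan is to split $F_n(321,1423,4123)$ via Lemma~\ref{lem-12} into the sets $F_n^{(1)}$ (where $\pi_1=1$) and $F_n^{(2)}$ (where $\pi_2=1$), and to show $|F_n^{(1)}(321,1423,4123)|=F_{n-1}$ and $|F_n^{(2)}(321,1423,4123)|=F_n-1$; adding these and using $F_{n+1}=F_n+F_{n-1}$ then gives the claim. The small cases $n\le 3$ I would verify directly.

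For $F_n^{(1)}$ I would argue exactly as in the proof of Theorem~\ref{1423-3124}. Writing $\pi=1\oplus\tau$, the leading entry $1$ is smallest and first, so in any pattern copy it is the leftmost chosen letter; that letter is the largest in $321$ and in $4123$ (impossible for a global minimum) but the smallest in $1423$, so a $1423$ through the leading $1$ forces a $312$ in $\tau$. Since $1423$, $4123$ and the Fishburn pattern all contain $312$, avoidance of $1423$ and $4123$ by $\pi$ is equivalent to avoidance of $312$ by $\tau$, while the Fishburn condition transfers cleanly (the relabeling of $\{2,\dots,n\}$ is value-consecutive). Hence $\pi\mapsto\tau$ is a bijection onto $F_{n-1}(321,312)$, and Lemma~\ref{lem:321-312} yields $|F_n^{(1)}|=F_{n-1}$.

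For $F_n^{(2)}$ I set $\pi_1=k$ and first show $k\in\{2,3\}$: if $k\ge4$ then $k-1,k-2$ both lie right of $1$, $321$-avoidance forces them into increasing order, and then $k\,1\,(k-2)\,(k-1)$ is a copy of $4123$. For $k=2$, writing $\pi=2\,1\,\sigma$ with $\sigma$ a permutation of $\{3,\dots,n\}$, the entries $2,1$ never enter a $321$ or $4123$ copy, and a $1423$ using either as its least letter forces a $312$ in $\sigma$; after the value-consecutive relabeling (which respects the Fishburn condition) $\sigma$ ranges exactly over $F_{n-2}(321,312)$, contributing $F_{n-2}$.

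The delicate case is $k=3$: here $\pi=3\,1\,\sigma$ with $\sigma$ a permutation of $S=\{2,4,5,\dots,n\}$, and the same bookkeeping reduces the constraints on $\sigma$ to avoiding $321$ and $312$, i.e. every entry of $\sigma$ has at most one smaller entry to its right. Since $2$ is the least element of $S$, every entry placed before $2$ has $2$ as its unique smaller right-neighbor, which forces the entries before $2$ to be the smallest available values $4,5,\dots,p+2$ in increasing order and all later entries to exceed them; thus $\sigma=4\,5\cdots(p{+}2)\,2\,\rho$ with $\rho$ a permutation of $\{p+3,\dots,n\}$ avoiding $321$ and $312$. The crux is to verify that the Fishburn condition on $\pi$ localizes to $\rho$: the value $2$ sits in a gap of $S$ (neither $1$ nor $3$ lies in $S$) and is therefore Fishburn-inert, while an ascent $a,a{+}1$ inside the increasing block cannot complete a Fishburn $231$ because the required smaller value $a-1$ lies to its left. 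Hence $\rho$ ranges over $F_m(321,312)$ with $m=n-p-2$, contributing $F_{n-p-2}$, and summing over the position $p=1,\dots,n-2$ of the entry $2$ and applying $\sum_{m=0}^{M}F_m=F_{M+2}-1$ gives $\sum_{m=0}^{n-3}F_m=F_{n-1}-1$ for this case. Therefore $|F_n^{(2)}|=F_{n-2}+(F_{n-1}-1)=F_n-1$. I expect this Fishburn-localization in the $k=3$ case to be the main obstacle, since the gap in $S$ blocks a naive reduction to a smaller $F_m(321,312)$ and must be handled by hand before the Fibonacci summation can be applied.
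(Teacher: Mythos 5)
Your proposal is correct and follows essentially the same route as the paper: the same split by the position of $1$, the same reduction of $F_n^{(1)}$ to $F_{n-1}(321,312)$, the same restriction $\pi_1\in\{2,3\}$ for $F_n^{(2)}$, and the same Fibonacci summation over the structure of the block preceding the entry $2$. The only cosmetic difference is that you reduce each case directly to $F_m(321,312)$ (explicitly checking that the Fishburn condition localizes across the value gap created by $2$), whereas the paper packages the same reduction as "treat the prefix as a single element playing the role of $1$" and recurses through $|F_m^{(1)}(321,1423,4123)|=F_{m-1}$; the resulting counts agree term by term.
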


\begin{prop}
	For $n\geq 4$, \begin{equation}\label{(1423,4123)-1}
		|F_{n}^{(1)}(321,1423,4123)|=F_{n-1}.
	\end{equation}
\end{prop}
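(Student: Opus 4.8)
The plan is to exploit the decomposition $\pi = 1\oplus\tau$ that is forced by $\pi_1=1$, exactly as in the proof of Theorem~\ref{1423-3124}, and to collapse the three-pattern condition on $\pi$ into a two-pattern condition on $\tau$ that Lemma~\ref{lem:321-312} already counts. Concretely, I would write an arbitrary $\pi\in F_n^{(1)}(321,1423,4123)$ as $\pi=1\oplus\tau$, where $\tau$ is the standardization of $\pi_2\cdots\pi_n$, a permutation of length $n-1$. The first observation is that the leading $1$, being the global minimum sitting in the first position, cannot participate in any copy of the Fishburn pattern $\pattern$, nor of $321$, nor of $4123$: each of these patterns requires its leftmost selected entry to be strictly larger than some later entry, whereas the leading $1$ is smaller than everything. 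Hence $\pi$ is Fishburn and avoids $321$ and $4123$ if and only if $\tau$ does.

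The only pattern that interacts with the leading $1$ is $1423$, which begins with $1$; here I would use the factorization $1423=1\oplus312$. A copy of $1423$ in $\pi$ either uses the leading $1$ as its smallest entry, in which case the remaining three entries form a copy of $312$ in $\tau$, or it lies entirely inside $\tau$, in which case $\tau$ already contains $1423$ and hence $312$. Conversely, any copy of $312$ in $\tau$ extends through the leading $1$ to a copy of $1423$ in $\pi$. Thus $\pi$ avoids $1423$ exactly when $\tau$ avoids $312$. Assembling these equivalences shows that $\pi\in F_n^{(1)}(321,1423,4123)$ if and only if $\tau\in F_{n-1}(321,312,4123)$. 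Since $4123$ contains a $312$ pattern (for instance its entries $4,1,2$), avoidance of $312$ already implies avoidance of $4123$, so the $4123$ condition is redundant and $F_{n-1}(321,312,4123)=F_{n-1}(321,312)$.

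This establishes the bijection $|F_n^{(1)}(321,1423,4123)|=|F_{n-1}(321,312)|$, and Lemma~\ref{lem:321-312} evaluates the right-hand side as $F_{n-1}$ whenever $n-1\geq2$, which covers all $n\geq4$ as required by \eqref{(1423,4123)-1}. I do not anticipate any real obstacle: the argument is a direct analogue of the one used for Theorem~\ref{1423-3124}, with $3124$ replaced by $4123$, and the only points demanding care are the verification that the leading $1$ cannot enter a copy of any non-$1$-initial pattern (or of $\pattern$) and the containment $312\subseteq4123$ that makes $4123$ superfluous once $312$ is forbidden; both are immediate from the definitions.
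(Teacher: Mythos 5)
Your proof is correct and follows essentially the same route as the paper's: decompose $\pi=1\oplus\tau$, show that avoidance of $1423$ in $\pi$ is equivalent to avoidance of $312$ in $\tau$, observe that $4123$ is redundant once $312$ is forbidden, and invoke Lemma~\ref{lem:321-312}. The only difference is that you spell out the $1423$/$312$ equivalence and the non-interaction of the leading $1$ with the other patterns directly, whereas the paper cites the corresponding argument from the proof of Theorem~\ref{1423-3124}.
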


\begin{proof}
	Suppose that $\pi=1\oplus\tau\in F_{n}^{(1)}(321,1423,4123)$. We claim that $\tau\in F_{n-1}(321,312)$. The proof of Theorem~\ref{1423-3124} establishes that if $\pi$ avoids the pattern 1423, then $\tau$ avoids 312. Moreover, if $\tau$ avoids 312, it must also avoid 4123 since a 4123-pattern contains a 312-pattern.

	Therefore, by Lemma~\ref{lem:321-312}, we have\[
	|F_{n}^{(1)}(321,1423,4123)|=|F_{n-1}(321,312)|=F_{n-1}.
	\]

	\vspace{-1cm}
\end{proof}

\begin{prop}
	For $n\geq 4$, $$|F_{n}^{(2)}(321,1423,4123)|=F_{n}-1.$$
\end{prop}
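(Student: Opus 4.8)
The plan is to fix $\pi\in F_n^{(2)}(321,1423,4123)$ with $\pi_1=k$ and $\pi_2=1$, and first to show $k\in\{2,3\}$. Since $\pi$ avoids $321$, the values $1,\dots,k-1$ all lie to the right of position~$1$ and must be increasing (an inversion among them would give a $321$ with the leading $k$); hence if $k\ge 4$ the entries $k,1,2,3$ form a $4123$, which is excluded. I then treat the two values of $k$ separately, using throughout two principles: a short prefix consisting of small frozen values cannot be part of any $321$, $1423$ or $4123$, so avoidance of these passes to the tail; and deleting the smallest consecutive values and shifting the rest by a constant preserves the Fishburn condition, since a constant shift respects the relation $\pi_i=\pi_j+1$.

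For $k=2$ I would strip the block $2\,1$: writing $\pi=2\,1\,\sigma$ with $\sigma$ a permutation of $\{3,\dots,n\}$ and standardizing $\sigma$ to $\tilde\sigma$ of length $n-2$, one checks that $\pi$ avoids $321$ iff $\tilde\sigma$ does, that $\pi$ avoids $1423$ iff $\tilde\sigma$ avoids $312$ (the leading $1$ converts a tail $312$ into a $1423$, and $312\subseteq 1423,\,4123$ makes avoidance of those automatic), and that $\pi$ is Fishburn iff $\tilde\sigma$ is. This identifies the $k=2$ block with $F_{n-2}(321,312)$, of size $F_{n-2}$ by Lemma~\ref{lem:321-312}.

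The main work is the case $k=3$; write $\pi=3\,1\,\tau$ with $\tau$ on $\{2,4,5,\dots,n\}$ and let $b_n$ count such $\pi$. As above $1423$-avoidance forces $\tau$ to avoid $312$, so together with $321$-avoidance the first entry of $\tau$ has at most one smaller entry to its right; hence $\pi_3\in\{2,4\}$. If $\pi_3=2$, then $\pi=3\,1\,2\,\rho$ and stripping the three smallest consecutive values $3,1,2$ identifies these permutations with $F_{n-3}(321,312)$, contributing $F_{n-3}$. If $\pi_3=4$, then $\pi=3\,1\,4\,\rho'$, and I would delete the value $4$ (at position~$3$) and standardize, obtaining a length-$(n-1)$ permutation beginning $3\,1$, i.e. a member of the same $k=3$ family; reinserting $4$ at position~$3$ is the inverse. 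This gives the recurrence $b_n=F_{n-3}+b_{n-1}$, and with $b_4=2$ and $F_{n-1}=F_{n-2}+F_{n-3}$ induction yields $b_n=F_{n-1}-1$. Adding the blocks gives $|F_n^{(2)}(321,1423,4123)|=F_{n-2}+(F_{n-1}-1)=F_n-1$.

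I expect the one genuinely delicate point to be that deleting the entry $4$ preserves the Fishburn property in both directions. Deletion cannot create a classical pattern, but it creates the new adjacency $(1,\rho'_1)$ and, after standardizing, could create a spurious $+1$ pair; either might in principle produce a $231$ Fishburn occurrence. Both are ruled out because the value $3$ is frozen at position~$1$: the new adjacency begins with $1$ and cannot start a $231$, and any $231$ created by standardizing would need $3$ to be its final (smallest) entry, which is impossible since $3$ is at the front. The reverse insertion is handled identically, the prefix $3\,1$ preventing the new $4$ from completing any forbidden pattern. Writing this verification out carefully is the crux; the remaining reductions are routine region-by-region checks.
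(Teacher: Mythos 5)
Your proof is correct, and its skeleton (first $\pi_1\in\{2,3\}$, then $\pi_3\in\{2,4\}$ when $\pi_1=3$) coincides with the paper's; your reductions for $\pi_1=2$ and $\pi_3=2$ are the same prefix-stripping arguments the paper uses, landing in $F_{n-2}(321,312)$ and $F_{n-3}(321,312)$ and invoking Lemma~\ref{lem:321-312}. Where you genuinely diverge is the subcase $\pi_3=4$, which is the bulk of the work. The paper proves a structural characterization there: the entries between $1$ and $2$ must form the consecutive increasing run $4,5,\ldots,k$, so $\pi=3\,1\,4\cdots k\,2\cdots$; the length-$k$ prefix then acts as a single smallest element, contributing $F_{n-k}$ for each $k$, and the total $\sum_{k=4}^{n}F_{n-k}$ is finished with the Fibonacci partial-sum identity. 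You instead delete the value $4$ and standardize, obtaining a bijection with the length-$(n-1)$ permutations of the class beginning $3\,1$, whence the recurrence $b_n=F_{n-3}+b_{n-1}$ with $b_4=2$. The two are closely related (iterating your deletion map recovers the paper's canonical form), but the trade-off is real: you avoid both the structural claim and the summation identity, at the price of verifying that deletion and, more delicately, insertion preserve the Fishburn condition and the classical avoidances. You correctly isolate the two dangers on the Fishburn side (the new adjacency, and the spurious $+1$ pair $(4,3)$ created by standardization) and the reason each is harmless: the entry $3$ is frozen in position $1$, so it can never be the terminal entry $\pi_j$ of a Fishburn occurrence, while the new adjacency begins with $1$. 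One point you should write out explicitly in the insertion direction, since it is not blocked by the prefix alone: the inserted $4$ could a priori serve as the ``$1$'' of a new $1423$; this is ruled out because a $312$ among the values larger than $4$ to its right would, together with the entry $1$ in position $2$, already give a $1423$ in the shorter permutation, a pull-back argument. With that spelled out, the recurrence, its solution $b_n=F_{n-1}-1$, and the final addition $F_{n-2}+(F_{n-1}-1)=F_n-1$ are sound and agree with the paper's count.
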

\begin{proof}
	We claim that $\pi_{1}=2$ or $\pi_{1}=3$ to avoid patterns 321 and 4123, which has been proved in the proof of Proposition \ref{2143-4123-2}.
	 Next we consider two possible values of $\pi_1$.
\begin{enumerate}[1)]
\item	If $\pi_{1}=2$, we can treat the elements 2 and 1 together as a single element, which plays the role of 1. The number of permutations in this case is \[
	|F_{n-1}^{(1)}(321,1423,4123)|=F_{n-2}.
	\]

\item	If $\pi_{1}=3$, we consider the position of 2.
	\begin{enumerate}[a)]
		\item $\pi_3=2$. We can treat three elements $3\ 1\ 2$ as one element playing the role of 1. Thus,\[
		|F_{n-2}^{(1)}(321,1423,4123)|=F_{n-3}.
		\]
		\item $\pi_3\neq 2$. We claim 	\[
		\pi=3\ 1\ 4\cdots k\ 2\cdots
		\]with $4\leq k\leq n$. First we shall prove that $\pi_3=4$ by contradiction. Suppose that $\pi_3>4$. We consider the value of $\pi_4$. On the one hand, if $\pi_4=2$, the element 4 is to the right of 2 and $1,\pi_3,2,4$ form a copy of 1423. On the other hand, if $\pi_4\neq 2$, then  to avoid 321, $\pi_3$ and $\pi_4$ must be in increasing order since 2 is to the right of them. Under the assumption of $\pi_3>4$, we have $\pi_3-1$ must be to the right of $\pi_4$ and $\pi_3,\pi_4,\pi_3-1$ form a copy of  $\pattern$, which is a contradiction.

		Second, suppose that there are $\ell$ elements between 1 and 2. If $\ell=1$, then the element is~4. If $\ell>1$, we denote them by $a_1,a_2,\ldots,a_\ell$ with $a_1=4$. Next we show that $a_{i+1}=a_{i}+1$ for $1\leq i\leq \ell-1$. The elements $a_1,a_2,\ldots,a_\ell$ to the left of 2 must be increasing to avoid 321. Additionally, if there exists $j\in [\ell]$ such that $a_{j}\neq a_{j-1}+1$, then $a_{j}-1$ is to the right of 2. Thus $1,a_{j},2,a_{j}-1$ form a copy of 1423. Let $a_\ell=k$. We substitute $a_1,a_2,\ldots,a_\ell$ with $4,5,\ldots,k$, where $k$ ranges from 4 to $n$.

		The first $k$ elements $3,1,4,\ldots,k,2$ can  be seen as one element to play the role of 1. So 	\[
		|F_{n-(k-1)}^{(1)}(321,1423,4123)|=F_{n-k}.
		\]
	\end{enumerate}
\end{enumerate}

	Therefore,  we have
	\begin{align*}
		|F_{n}^{(2)}(321,1423,4123)|=&F_{n-2}+F_{n-3}+\sum_{k=4}^{n}F_{n-k}\\
		=&\sum_{k=0}^{n-2}F_{k} = F_{n}-1.
	\end{align*}
    \vspace{-1cm}

    \ \ \
\end{proof}

\begin{proof}[Proof of Theorem \ref{1423-4123}]
	We can check the statement directly for $n\leq 3$.
	For $n\geq 4$, we have
	\[
	\begin{aligned}
		|F_{n}(321,1423,4123)|=&|F_{n}^{(1)}(321,1423,4123)|+|F_{n}^{(2)}(321,1423,4123)|
		\\=&F_{n-1}+(F_{n}-1)\\
		=&F_{n+1}-1.
	\end{aligned}
	\]
    \vspace{-1cm}

    \ \ \
\end{proof}

\subsection{Enumerating $F_{n}(321,3124,4123)$}
The main result of this subsection is as follows.
\begin{thm}\label{3124-4123}
	For $n\geq 1$,  $$|F_{n}(321,3124,4123)|=F_{n+1}-1.$$
\end{thm}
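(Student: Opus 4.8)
The plan is to follow the same two-case strategy used throughout the paper, splitting $F_n(321,3124,4123)$ according to Lemma~\ref{lem-12} into the disjoint sets $F_n^{(1)}$ (with $\pi_1=1$) and $F_n^{(2)}$ (with $\pi_2=1$), and to show the two counts sum to $F_{n+1}-1$. For the $\pi_1=1$ case, I would write $\pi=1\oplus\tau$ and identify which class $\tau$ must lie in. Since neither $3124$ nor $4123$ begins with $1$, the observation recorded just before Section~\ref{sec2} gives immediately that $|F_n^{(1)}(321,3124,4123)|=|F_{n-1}(321,3124,4123)|$; this yields a clean recursion but still requires pinning down the base contribution, so I would instead aim to compute $|F_n^{(2)}(321,3124,4123)|$ explicitly and then combine.

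The heart of the argument is the $\pi_2=1$ case. Writing $\pi_1=k$, I would first constrain $k$: as in Proposition~\ref{2143-4123-2}, if $k\ge 4$ then $k-2$ and $k-1$ lie (in increasing order, by $321$-avoidance) to the right of $1$, and $k\,1\,(k-2)(k-1)$ is a copy of $4123$, a contradiction; hence $k\in\{2,3\}$. When $k=2$, the elements $2\,1$ collapse to a single smallest element, so this subcase contributes $|F_{n-1}^{(1)}(321,3124,4123)|$. When $k=3$, I would analyze the position of $2$: either $\pi_3=2$, in which case $3\,1\,2$ acts as a single smallest block contributing $|F_{n-2}^{(1)}(\dots)|$, or $2$ appears later, forcing (by $321$- and Fishburn-avoidance) an increasing run $3\,1\,4\,5\cdots k\,2\cdots$ whose prefix again collapses to a single block, giving $\sum_{k=4}^{n}|F_{n-(k-1)}^{(1)}(\dots)|$. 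Summing these across the telescoping $F^{(1)}$-values (which equal consecutive Fibonacci numbers by the $\pi_1=1$ reduction) should reproduce $|F_n^{(2)}|=F_n-1$, in direct parallel with the preceding proposition for $(321,1423,4123)$.

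The main obstacle I anticipate is verifying that the $k=3$, $\pi_3\neq 2$ subcase behaves identically to its counterpart in the $(321,1423,4123)$ computation \emph{despite} replacing the $1423$ constraint by $3124$. In the earlier proof, $1423$-avoidance was exactly what forced $\pi_3=4$ and then $a_{i+1}=a_i+1$ for the block between $1$ and $2$; here I must re-derive the same structural conclusion using $3124$-avoidance instead. The key check is that if some $a_j\neq a_{j-1}+1$, then $a_j-1$ sits to the right of $2$, and now I would exhibit a copy of $3124$ (using $3$, $1$, and a suitable later pair) rather than $1423$; I expect this to go through, but it is the one place where the two theorems are genuinely not a mechanical relabeling. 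Once that lemma-level structural claim is established, the arithmetic collapses: by the $\pi_1=1$ reduction each $F^{(1)}$-term is a Fibonacci number, the $k=3$ sum telescopes as $F_{n-2}+F_{n-3}+\sum_{k=4}^{n}F_{n-k}=\sum_{k=0}^{n-2}F_k=F_n-1$, and therefore
\[
|F_n(321,3124,4123)|=|F_n^{(1)}|+|F_n^{(2)}|=F_{n-1}+(F_n-1)=F_{n+1}-1,
\]
for $n\ge 4$, with the small cases $n\le 3$ checked by hand.
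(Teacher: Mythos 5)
Your frame (splitting via Lemma~\ref{lem-12}, restricting $\pi_1\in\{2,3\}$ by the $4123$/$321$ argument, collapsing $2\,1$ into a block) matches the paper's, but the heart of your argument --- the $\pi_1=3$ analysis --- is wrong, and you flagged exactly the right spot while guessing the wrong outcome. With $3124$ forbidden, the structure is \emph{not} parallel to the $(321,1423,4123)$ case: once $\pi$ begins $3\,1$, the entries $3,1,2$ form the ``312'' of a $3124$-copy with \emph{any} larger entry appearing after $2$, so every element of $\{4,\ldots,n\}$ must precede $2$; that is, $2$ is forced to be the \emph{last} entry, and $321$-avoidance then leaves exactly one permutation, $\pi=3\,1\,4\,5\cdots n\,2$. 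In particular your subcase $\pi_3=2$ is empty for $n\ge 4$ (e.g.\ $3\,1\,2\,4$ \emph{is} a copy of $3124$), and your subcase ``$\pi=3\,1\,4\cdots k\,2\cdots$ with the prefix collapsing to a block'' overcounts: for $k<n$ any such permutation contains $3124$ (e.g.\ $3\,1\,4\,2\,5$ contains the subsequence $3,1,2,5$). So the $k=3$ case contributes $1$, not $F_{n-3}+\sum_{k=4}^{n}F_{n-k}$.

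A second, compounding error is your Fibonacci bookkeeping on the $\pi_1=1$ side. Since neither $3124$ nor $4123$ begins with $1$, there is no reduction to $F_{m-1}(321,312)$ here (that reduction is special to $1423=1\oplus 312$); instead $|F_m^{(1)}(321,3124,4123)|=|F_{m-1}(321,3124,4123)|=F_m-1$ by induction on the theorem itself, not $F_{m-1}$. With the correct ingredients your own skeleton closes up and becomes precisely the paper's proof: $|F_n^{(2)}|=|F_{n-1}^{(1)}|+1=(F_{n-1}-1)+1=F_{n-1}$ and $|F_n^{(1)}|=F_n-1$, whence $|F_n(321,3124,4123)|=(F_n-1)+F_{n-1}=F_{n+1}-1$. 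Note the two summands are swapped relative to your claim ($F^{(1)}$ carries $F_n-1$, while $F^{(2)}$ carries $F_{n-1}$); your version reaches the correct total only because you transplanted the split from Theorem~\ref{1423-4123} wholesale, and the two errors cancel in the sum.
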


\begin{prop}
	For $n\geq 4$,
	\begin{equation}
		|F_{n}^{(2)}(321,3124,4123)|=F_{n-1}.
	\end{equation}
\end{prop}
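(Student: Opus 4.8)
The plan is to fix $\pi_1=k$ (recall $\pi_2=1$, so $k\geq 2$) and first cut down the possible values of $k$. The argument given in the proof of Proposition~\ref{2143-4123-2} that forces $k<4$ uses only the avoidance of $321$ and $4123$: if $k\geq 4$ then $k-1$ and $k-2$ both lie to the right of $1$ and must ascend to avoid $321$, whereupon $k\,1\,(k-2)(k-1)$ is a copy of $4123$. Since both $321$ and $4123$ are among our forbidden patterns, this applies verbatim and gives $k\in\{2,3\}$. I would then treat the two cases separately and add the totals.

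For $k=3$ I claim there is a unique admissible permutation, namely $\pi=3\ 1\ 4\ 5\cdots n\ 2$. The key point is that with $\pi_1=3,\pi_2=1$, any element $x\geq 4$ lying to the right of $2$ yields the subsequence $3\,1\,2\,x$, a copy of $3124$; hence every element of $\{4,\dots,n\}$ must precede $2$. Avoiding $321$ then forces $4,5,\dots,n$ to occur in increasing order, which determines $\pi$ completely. A short direct check confirms that $3\ 1\ 4\cdots n\ 2$ is Fishburn and avoids $321$, $3124$, $4123$, so this case contributes exactly $1$.

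For $k=2$ I would use the contraction the paper employs throughout: because $2$ sits immediately before $1$ at the front, the value $2$ can never participate in an occurrence of $321$, $3124$, $4123$, or of the Fishburn pattern. Indeed, sitting in position $1$ it could only be the leading entry of such an occurrence, and in each pattern the leading role would require at least two entries strictly smaller than $2$ elsewhere, which is impossible. Deleting $2$ and reducing thus yields a bijection between the $k=2$ permutations and $F_{n-1}^{(1)}(321,3124,4123)$. Since neither $3124$ nor $4123$ begins with $1$, the observation recalled in the introduction gives $|F_{n-1}^{(1)}(321,3124,4123)|=|F_{n-2}(321,3124,4123)|$, and invoking Theorem~\ref{3124-4123} at argument $n-2$ (strong induction) this equals $F_{n-1}-1$.

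Combining the two cases gives $|F_n^{(2)}(321,3124,4123)|=(F_{n-1}-1)+1=F_{n-1}$, after checking the small base cases (for instance $n=4$, where $k=2$ gives the two permutations $2134$ and $2143$ and $k=3$ gives $3142$) by hand. I expect the main obstacle to be the $k=2$ step: one must argue carefully that contracting $2\,1$ to a single smallest symbol preserves both the three classical avoidance conditions and the Fishburn property—in particular that the new adjacency of consecutive values created by the reduction cannot manufacture a fresh copy of the bivincular pattern (it cannot, since the required smallest entry would have to sit in position $1$ of the reduced word)—and that the induction be arranged so the appeal to Theorem~\ref{3124-4123} at $n-2$ is not circular.
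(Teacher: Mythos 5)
Your proposal is correct and takes essentially the same approach as the paper: restrict $\pi_1\in\{2,3\}$, observe that $\pi_1=3$ forces the single permutation $3\ 1\ 4\cdots n\ 2$, and contract $2\ 1$ when $\pi_1=2$ so that the count becomes $|F_{n-1}^{(1)}(321,3124,4123)|=|F_{n-2}(321,3124,4123)|=F_{n-1}-1$ by the theorem at smaller length, exactly the simultaneous induction the paper runs. One minor slip worth fixing: the leading entry of a Fishburn ($231$-type) occurrence needs one smaller and one \emph{larger} entry, not two smaller ones, so the correct reason that $2$ cannot lead such an occurrence is that $\pi_1\pi_2=2\,1$ is a descent rather than an ascent; this does not affect your conclusion.
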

\begin{proof}
	We claim that $2\leq \pi_{1}\leq 3$. If not, suppose that $\pi_{1}\geq 4$. We consider the order of 2 and 3.
	If $2\ 3$ are increasing, then $\pi_{1},1,2,3$ form a copy of 4123. However, if they are in descending order, then $\pi_{1},3,2$ form a copy of 321. So $\pi_1=2$ or $\pi_1=3$. As a result, the number of permutations can be classified into two cases based on the value of $\pi_1$.

	If $\pi_{1}=2$, we can treat $2\ 1$ as one element playing the role of 1. The number of permutations in this case can be reduced to  \[
	|F_{n-1}^{(1)}(321,3124,4123)|=F_{n-1}-1.
	\]

	If $\pi_{1}=3$, we have $ \pi=3\ 1\ 4\cdots n\ 2. $ First we shall show $\pi_{n}=2$. If  $\pi_{n}\neq 2$, then $3,1,2,\pi_{n}$ form a copy of 3124. Second, to avoid 321, the elements $4,\ldots,n$ appearing before 2 must be in increasing order.

	Therefore, we have
	\[
	|F_{n}^{(2)}(321,3124,4123)|=(F_{n-1}-1)+1=F_{n-1}.
	\]
    \vspace{-1cm}

    \ \ \
\end{proof}

\begin{proof}[Proof of Theorem \ref{3124-4123}]
	We can check the formula directly for $n\leq 3$.
	By induction, we obtain
	$$|F_{n}^{(1)}(321,3124,4123)|=|F_{n-1}(321,3124,4123)|=F_{n}-1.$$
	For $n\geq 4$, we have
	\begin{align*}
		|F_{n}(321,3124,4123)|=&|F_{n}^{(1)}(321,3124,4123)|+|F_{n}^{(2)}(321,3124,4123)|
		\\=&(F_{n}-1)+F_{n-1}\\
		=&F_{n+1}-1.
	\end{align*}

    \vspace{-1cm}
\end{proof}


\section{Avoiding 321 and a classical pattern of size 5}
\label{sec4}
\subsection{Enumerating $F_n(321,14253)$}
The main result of this subsection is as follows.
\begin{thm}\label{321-14253}
	For $n\geq 1$, we have
	\[
  		|F_n(321,14253)|=2^n-\binom{n}{2}-1.
	\]
\end{thm}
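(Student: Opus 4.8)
The plan is to split $F_n(321,14253)$ according to Lemma~\ref{lem-12} into those permutations with $\pi_1=1$ and those with $\pi_2=1$, treating the two parts separately, and to exploit the decomposition $14253=1\oplus 3142$ throughout. Thus I would write
\[
|F_n(321,14253)|=|F_n^{(1)}(321,14253)|+|F_n^{(2)}(321,14253)|
\]
and attack each summand in turn.

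For the first summand I would invoke the direct-sum observation from the introduction: since $14253=1\oplus 3142$, we have $|F_n^{(1)}(321,14253)|=|F_{n-1}(321,3142)|$, and Lemma~\ref{lem3142} then gives $|F_{n-1}(321,3142)|=2^{n-2}$. This disposes of the $\pi_1=1$ case at once, the small value $n=1$ being checked by hand.

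The substance of the argument is the $\pi_2=1$ case, for which I would establish the auxiliary identity $|F_n^{(2)}(321,14253)|=3\cdot 2^{n-2}-\binom n2-1$ by analysing $\pi=k\,1\,\rho$ according to the value $k=\pi_1$. First one notes that, to avoid $321$, the low block $2,\dots,k-1$ must occur in increasing order, and that any copy of $14253$ can be anchored at the leading $1$; consequently $\pi$ avoids $14253$ exactly when the suffix $\rho$, a permutation of $\{2,\dots,n\}\setminus\{k\}$, avoids $3142$. The case $k=2$ reduces, by treating $2\,1$ as a single smallest element, to $|F_{n-1}^{(1)}(321,14253)|=|F_{n-2}(321,3142)|=2^{n-3}$; the case $k=n$ forces the single permutation $n\,1\,2\cdots (n-1)$; and for $3\le k\le n-1$ one must count the admissible interleavings of the high block $k+1,\dots,n$ with the increasing low block, subject to $\rho$ avoiding $321$ and $3142$ and to $\pi$ avoiding $\pattern$. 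Summing the resulting contributions over $k$ should yield $3\cdot 2^{n-2}-\binom n2-1$.

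The hard part will be precisely this interleaving count for $3\le k\le n-1$. The delicate point is that $\pattern$ is a \emph{bivincular} pattern, so its value-adjacency requirement $\pi_i=\pi_j+1$ is \emph{not} preserved when one standardises the suffix $\rho$: the value $k$ is missing from $\rho$, and this gap at $k$ must be tracked carefully, since an adjacent ascent beginning with $k+1$ can never complete a forbidden copy of $\pattern$ inside $\rho$. Once the contributions for $3\le k\le n-1$ are pinned down and the total is shown to be $3\cdot 2^{n-2}-\binom n2-1$, the theorem follows by combining the two cases:
\[
|F_n(321,14253)|=2^{n-2}+\Big(3\cdot 2^{n-2}-\binom n2-1\Big)=2^{n}-\binom n2-1.
\]
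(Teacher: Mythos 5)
Your decomposition is exactly the paper's: split via Lemma~\ref{lem-12}, handle $\pi_1=1$ by the direct-sum observation $14253=1\oplus 3142$ together with Lemma~\ref{lem3142} (giving $2^{n-2}$), then analyse $\pi=k\,1\,\rho$ according to $k=\pi_1$. Your target $|F_n^{(2)}(321,14253)|=3\cdot2^{n-2}-\binom{n}{2}-1$ is the correct complement, your $k=2$ and $k=n$ contributions ($2^{n-3}$ and $1$) agree with the paper, and your observation that $\pi$ avoids $14253$ precisely when $\rho$ avoids $3142$ (re-anchor any copy of $14253$ at the element $1$ in position two) is correct and somewhat cleaner than what the paper makes explicit.

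The gap is that the case $3\le k\le n-1$ --- the substance of the theorem --- is never carried out: you say the interleaving count ``should yield'' the stated total, but give no argument, and this count is not routine; it is where the paper spends essentially all of its effort. Concretely, the paper proves that the elements of $\{k+1,\dots,n\}$ placed to the left of $k-1$ must all be inserted, in increasing order, into a \emph{single} site between $1$ and $k-1$ (two distinct sites would produce $1,x,i,y,k-1$, a copy of $14253$); it then counts, for each of the $k-2$ choices of that site, the permutations with exactly one inserted element (a geometric sum $\sum_{\ell=k+1}^{n-1}2^{n-\ell-1}+1=2^{n-k-1}$, obtained by reducing the tail to $F_{n-\ell}(321,3142)$) and with more than one inserted element (where $\pattern$-avoidance forces the inserted values to be consecutive $k+1,k+2,\dots$, giving $\sum_{s=2}^{n-k}2^{n-k-s}=2^{n-k-1}-1$), for a subtotal of $(k-2)\left(2^{n-k}-1\right)$, plus $2^{n-k-1}$ for the configuration with no high element left of $k-1$. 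These structural claims are precisely where the three constraints you list (321 with the prefix $k$, 3142 inside $\rho$, and the bivincular condition with the value gap at $k$ that you yourself flag) interact, and nothing in your proposal substitutes for them; as it stands, the proof is a correct skeleton whose load-bearing middle is missing.
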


In order to prove Theorem~\ref{321-14253}, we first enumerate $F_n(321,3142)$.

\begin{lem}\label{lem3142}
	For $n\geq 1$, we have
	\begin{equation}\label{321-3142}
		|F_n(321,3142)|=2^{n-1}.
	\end{equation}
\end{lem}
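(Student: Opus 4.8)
The plan is to avoid arguing directly on Fishburn permutations and instead reduce the count to a classical pattern-avoidance problem, using the tools already in place. Since $3142\in\{132,213,312,3142\}$, Lemma~\ref{lem:classical_pattern} applies verbatim and yields
\[
F_n(321,3142)=S_n(231,321,3142).
\]
This is the single step in which the Fishburn ($\pattern$-avoidance) hypothesis is consumed; after it the problem is entirely classical.

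Next I would discard the pattern $3142$ as redundant. As already observed in the proof of Theorem~\ref{3142-2143}, the pattern $3142$ contains a copy of $231$, read off from its first, third, and fourth entries $3,4,2$. Hence every permutation avoiding $231$ automatically avoids $3142$, so that
\[
S_n(231,321,3142)=S_n(231,321).
\]
It therefore remains only to establish $|S_n(231,321)|=2^{n-1}$.

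For this final count I would invoke the classical enumeration of permutations avoiding a pair of length-three patterns due to Simion and Schmidt~\cite{Simion1985}, which gives $|S_n(231,321)|=2^{n-1}$. If a self-contained argument is preferred, one can note that $\pi\in S_n(231,321)$ exactly when no entry has two larger entries to its left (since both $231$ and $321$ have their smallest entry last). Inspecting the position of the largest entry $n$, every entry following $n$ must be increasing and larger than every entry preceding $n$, for otherwise an occurrence of $231$ or $321$ appears; thus when $n$ occupies position $k$ the entries before it constitute an arbitrary member of $S_{k-1}(231,321)$. Writing $a_n=|S_n(231,321)|$ with $a_0=1$, this gives $a_n=\sum_{k=1}^{n}a_{k-1}$, whence $a_n=2a_{n-1}$ for $n\geq 2$ and $a_n=2^{n-1}$.

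I do not anticipate any genuine obstacle: the argument is a short reduction followed by a standard count. The only point needing care is the first reduction, which is supplied by Lemma~\ref{lem:classical_pattern}; once that is granted, the containment of $231$ in $3142$ and the classical value $2^{n-1}$ close the proof at once.
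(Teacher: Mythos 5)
Your proposal is correct and is essentially the paper's own proof: both reduce via Lemma~\ref{lem:classical_pattern} to $S_n(231,321,3142)$, discard $3142$ because it contains a $231$-pattern, and then invoke Simion and Schmidt's count $|S_n(231,321)|=2^{n-1}$. The self-contained recursive derivation you append (via the position of $n$) is a valid bonus but does not change the route.
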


\begin{proof}
By Lemma~\ref{lem:classical_pattern}, we have $F_n(321,3142)=S_n(231,321,3142)$. Actually the avoidance of 3142 can be omitted since a 3142-pattern contains a 231-pattern. Simion and Schmidt have proved in~\cite[Lemma 5 (a)]{Simion1985} that $|S_n(321,231)|=2^{n-1}$.
Therefore, we get $|F_n(321,3142)|=2^{n-1}$.
\end{proof}

It is worth mentioning that there exists a bijection between the sets of left-to-right maxima of $F_n(321,3142)$ and the subsets of $[n]$ containing $n$. Recall that $\pi_i$ is a left-to-right maximum of $\pi$ if $\pi_i$ is greater that all the entries to its left. For instance, the left-to-right maxima of permutation $$\pi=3\ 1\ 2\ 4\ 7\ 5\ 6$$ are 3,4,7. Conversely, given the left-to-right maxima, we can uniquely determine the entire permutation. For instance, as the left-to-right maxima are 3,4,7, we  deduce that $\pi_1=3$ and $\pi_2=1$. To avoid 3142, the element 2 is to the left of 4. Additionally, since the elements 5,6 are not left-to-right maxima, they are to the right of 7. Thus we have determined $\pi$.

\begin{proof}[Proof of Theorem \ref{321-14253}]
We can directly check that Theorem \ref{321-14253} holds for $n\leq 5$. For $n\geq 6$, we shall classify the permutations based on the position at which the element 1 appears.
Let $\pi$ be a permutation in $F_n(321,14253)$.

 If $\pi_1=1$, then $\pi=1\oplus\tau$ with $\tau \in F_{n-1}(321,3142)$. By equality \eqref{321-3142}, the number of permutations is $|F_{n-1}(321,3142)|=2^{n-2}$.

If $\pi_2=1$, we divide our discussion into five cases according to the values of $\pi_1$.
\begin{enumerate}[1)]
	\item $\pi_1=2$. Then $\pi=2\ 1\oplus\tau$ with $\tau\in F_{n-2}(321,3142)$. Hence, the number of permutations in this case is $|F_{n-2}(321,3142)|=2^{n-3}$.

	\item $\pi_1=k$ with $3\leq k\leq n-3$.
	To avoid 321,  it is necessary for the elements   $2,\ldots,k-1$ that follow after  $k$  to be in increasing order. We next consider whether there are elements to the left of $k-1$ or not.
\begin{enumerate}[a)]
	\item No elements in $k+1,\ldots,n$  are inserted to the left of $k-1$. Thus, the first $k$ elements of $\pi$ are $k,1,2\cdots,k-1$ and $$\pi=k\ 1\ 2\cdots k-1\oplus \tau$$ with $\tau\in F_{n-k}(321,3142)$. The number of permutations in this case is $$|F_{n-k}(321,3142)|=2^{n-k-1}.$$
	\item There is at least one element in the range $k+1\ldots,n$ that is to the left of $k-1$.
	We first claim that  if two or more elements in the set $\{k+1\ldots,n\}$ are positioned to the left of $k-1$, they must be arranged in increasing order and inserted into exactly one of the available sites.
	If $k=3$,  there is only one available space between 1 and 2.
	If $k\geq 4$, there are at least two spaces between 1 and $k-1$.
	Let \[
		\pi^{\prime}=k\ 1\ ^{1}\ 2\ ^{2}\cdots i\ ^{i}\ i+1\cdots ^{\, k-2}\ k-1\ ^{k-1}.
		\]
	Assume two elements, denoted $x$ and $y$, from the set $\{k+1,\ldots,n\}$ are inserted into two distinct sites ranging from 1 to $k-1$. To avoid the pattern 321, it is necessary that $x<y$. Let $i$ be an element between $x$ and $y$ with $2\leq i\leq k-3$. Then  $1,x,i,y,k-1$ forms a copy of the pattern 14253, which is forbidden in $\pi$. This proves our claim.

We next prove that there are precisely  $2^{n-k}-1$ ways to divide $k+1,\ldots,n$ into the $i$-th and the $(k-1)$-th sites while ensuring that  $i$-th site is non-empty.
	{We make the following classification based on the number of elements in the $i$-th site.}
	\begin{itemize}
\item Only one element $\ell \in\{k+1,\ldots,n\}$ is inserted into the $i$-th site.
In order to avoid 321, the elements $k+1 \cdots \ell-1$ are in increasing order. Additionally, $1, \ell, k-1, \ell-1$ form a copy of 1423. To avoid the pattern 14253, the elements $\ell+1,\ldots,n$, representing  the role of 5, are positioned to the right of $\ell-1$. The first $\ell$ elements of the permutation are $$k,1,2 \cdots i, \ell, i+1 \cdots k-1, k+1 \cdots \ell-1$$ and  they can be considered as the smallest element.
Note that when $\ell=k+1$, we have $\pi_\ell=k-1$.
 Thus, \[
		|F_{n-\ell}(321,3142)|=\left\{\begin{array}{cc}
			2^{n-\ell-1}, & k+1 \leq \ell \leq n-1, \\
			1, & \ell=n .
		\end{array}\right.
		\]
Thus, the total number of permutations with one element inserted into the $i$-th site is\[
\sum_{\ell=k+1}^{n-1}2^{n-\ell-1}+1=2^{n-k-1}.
\]
\item More than one elements $a_1,a_2,\ldots,a_s\in\{k+1,\ldots,n\}$ with $2\leq s\leq n-k$ are inserted into the $i$-th site. Then 321-avoiding implies that $$a_1<a_2<\cdots<a_s.$$
The $\pattern$-avoiding condition forces $a_1=k+1, a_2=k+2$ and so forth until $a_{s-1}=k+s-1$, and $a_s\geq k+s$.
Observe that removing $k+1, k+2, \ldots, k+s-1$ does not affect our enumeration.
Similar to the above paragraph, the number of permutations with more than one elements inserted into $i$-th site is
\[
\sum_{s=2}^{n-k} 2^{n-k-s} =2^{n-k-1}-1.
\]
\end{itemize}
Consequently, as $i$ has $k-2$ choices, the number of permutations in this subcase is  $$(k-2)\left(2^{n-k-1}+(2^{n-k-1}-1)\right) = (k-2) \left(2^{n-k}-1\right).$$

	\end{enumerate}
\item $\pi_1=n-2$. The elements $2,\ldots,n-3$ that appear after $n-2$ must be in increasing  order. Let $$\pi^{\prime}=n-2\ 1\ 2\cdots i\ ^{1}\ i+1\cdots n-3\ ^{2}$$ where $1\leq i\leq n-3$.
We can insert $n-1$ and $n$ into $\pi^{\prime}$ in the following ways.
\begin{itemize}
	\item $n-1$ is in the first site and $n$ is in the second site.
	\item $n$ is in the first site and $n-1$ is in the second site.
	\item $n-1$ and $n$ are both in the first site in increasing order.
	\item $n-1$ and $n$ are both in the second site.
\end{itemize}
 The first three subcases are  associated with  $1\leq i\leq n-4$. In the last subcase, $n-1$ and $n$ can be inserted  either in increasing  or descending  order.
Thus the number of permutations when $\pi_1=n-2$ is $3(n-4)+2=3n-10$.
\item $\pi_1=n-1$. Then $2,\ldots,n-2$ appearing after $n-1$ must be in increasing order. Let $\pi^{\prime}=n-1\ 1\ ^{1}\ 2\ ^{2}\cdots\ ^{n-3}\ n-2\ ^{n-2}$. We should insert $n$ into $\pi^{\prime}$ to get $\pi$. Actually, all the $n-2$ sites are available to insert $n$. Thus the number of permutations in this case is $n-2$.
\item $\pi_1=n$. To avoid 321, the elements $2,\ldots,n-1$ appearing after $n$ must be in increasing order. So $\pi=n\ 1\ 2\cdots n-1$.

\end{enumerate}
Therefore, for $n\geq 6$, it follows that
\begin{align*}
	F_n(321,14253)&=2^{n-2}+2^{n-3}+\sum_{k=3}^{n-3} \left( 2^{n-k-1}+(k-2)(2^{n-k}-1) \right) +(3n-10)+(n-2)+1\\&=2^n-\binom{n}{2}-1.
\end{align*}
This completes the proof.
\end{proof}

\subsection{Enumerating $F_n(321,21354)$}
The main result of this subsection is as follows.
\begin{thm}\label{321-21354}
	For $n\geq 1$, we have
	\begin{equation}\label{eq21354}
	|F_n(321,21354)|=2^n-\binom{n}{2}-1.
	\end{equation}
\end{thm}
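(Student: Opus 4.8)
The plan is to mirror the structure used for Theorem~\ref{321-14253}, but since the forbidden pattern $21354$ does not begin with $1$, the contribution of the permutations with $\pi_1=1$ is now genuinely recursive, so I would argue by induction on $n$. Concretely, I would verify the formula directly for the small base cases $n\le 5$, assume it for all smaller lengths, and split $F_n(321,21354)$ according to Lemma~\ref{lem-12} into those with $\pi_1=1$ and those with $\pi_2=1$. For the first part, because $21354$ does not begin with $1$, the observation recorded in the introduction gives immediately
\[
|F_n^{(1)}(321,21354)|=|F_{n-1}(321,21354)|,
\]
which by the induction hypothesis equals $2^{n-1}-\binom{n-1}{2}-1$.

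The heart of the argument is therefore a separate proposition computing $|F_n^{(2)}(321,21354)|=2^{n-1}-n+1$. Writing $\pi_1=k$ and $\pi_2=1$, the $321$-condition forces the elements $2,\dots,k-1$ to occur in increasing order, so the only freedom lies in how the large block $\{k+1,\dots,n\}$ is interleaved. I would organize the count by the value of $k$. When $k=2$ one has $\pi=2\,1\oplus\tau$; using the decomposition $21354=21\oplus 132$ together with the fact that every element of $\tau$ exceeds $2$, one checks that $\pi$ avoids $21354$ exactly when $\tau$ avoids $132$, so this case contributes $|F_{n-2}(321,132)|=n-2$ by Lemma~\ref{lem:321-132}. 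The extreme values $k=n,\,n-1,\,n-2$ are handled by hand, as in the proof of Theorem~\ref{321-14253}, where the placements of $n-1$ and $n$ relative to the increasing run are very restricted. In the generic range $3\le k\le n-3$ I would split according to whether any element of $\{k+1,\dots,n\}$ lies to the left of $k-1$: if none does then $\pi=k\,1\,2\cdots(k-1)\oplus\tau$ and, again by the $21\oplus 132$ structure, $\pi$ avoids $21354$ iff $\tau$ avoids $132$, contributing $|F_{n-k}(321,132)|=n-k$; the remaining configurations are counted by analysing, for each admissible insertion site, which increasing runs of large elements may be placed there without creating $321$, $\pattern$, or $21354$. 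Summing the per-$k$ contributions should telescope to $2^{n-1}-n+1$.

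Finally I would assemble the two parts,
\[
|F_n(321,21354)|=|F_n^{(1)}(321,21354)|+|F_n^{(2)}(321,21354)|
=\Big(2^{n-1}-\binom{n-1}{2}-1\Big)+\big(2^{n-1}-n+1\big),
\]
and conclude using the identity $\binom{n-1}{2}+n=\binom{n}{2}+1$ that this equals $2^n-\binom{n}{2}-1$.

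I expect the main obstacle to be the generic case $3\le k\le n-3$ in which at least one large element sits to the left of $k-1$. There the interaction between the Fishburn pattern $\pattern$ and the $21354$-condition (through its $21\oplus 132$ shape) tightly restricts how the elements of $\{k+1,\dots,n\}$ may be distributed among the gaps of the forced increasing run, and it is exactly this restriction---rather than the freer one governing $3142$ in the $14253$ analysis---that alters the individual case counts while leaving the total unchanged. Getting this bookkeeping right, and checking that the boundary cases $k\in\{n-2,n-1\}$ are consistent with the generic formula, is where the care is needed.
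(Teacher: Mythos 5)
Your frame coincides with the paper's: induction on $n$, the split via Lemma~\ref{lem-12}, the reduction $|F_n^{(1)}(321,21354)|=|F_{n-1}(321,21354)|$, the intermediate target $|F_n^{(2)}(321,21354)|=2^{n-1}-n+1$, and the two contributions you actually compute (the case $k=2$ giving $|F_{n-2}(321,132)|=n-2$, and the prefix $k\,1\,2\cdots(k-1)$ giving $|F_{n-k}(321,132)|=n-k$) all agree with the paper's proof. But the heart of the theorem --- enumerating the permutations with $\pi_2=1$ in which large elements interleave into the initial increasing run --- is precisely what you leave at the level of ``analysing, for each admissible insertion site, which increasing runs of large elements may be placed there'' and ``should telescope to $2^{n-1}-n+1$.'' That computation is the bulk of the paper's argument, so this is a genuine gap rather than missing routine detail.

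Moreover, the template you propose to fill the gap (the site-based analysis of Theorem~\ref{321-14253}, with special treatment of $k\in\{n-2,n-1\}$) does not fit the pattern $21354$. In the $14253$ analysis the key rigidity is that all large elements lying to the left of $k-1$ must occupy a \emph{single} site, since two large elements in distinct sites create a copy of $14253$. For $21354$ this fails: for example, $\pi=4\,1\,5\,2\,6\,3\,7$ belongs to $F_7^{(2)}(321,21354)$, yet the large elements $5$ and $6$ sit in two \emph{different} sites of the run $1\,2\,3$. So the structure here is freer, not tighter as you suggest. What the paper does instead is classify by the position of $2$, i.e.\ by the longest prefix of the form $k\,1\,2\cdots(i-1)$; then $321$, $\pattern$ and $21354$ force the elements between $1$ and $2$ to be either a single large element or a consecutive run $k+1,\dots,m$, after which the remaining small and large elements shuffle freely as two increasing sequences. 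This produces multiset counts $\binom{n-m+k-2}{k-2}$ and $\binom{n-m+k-3}{k-3}$, summing by the hockey stick identity to $\binom{n-2}{k-1}$ for each $k$, and then again to $2^{n-1}-n+1$. None of that shuffle structure is visible from a per-site analysis modeled on $14253$, so until you discover and verify it, the claimed telescoping is unsupported.
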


For our convenience, we  let   $F_n^{<k12\cdots\ell>}(321,21354)\ (1\leq \ell\leq k-1)$  denote the set of  Fishburn permutations of length $n$ that avoid  both 321 and 21354, and begin with $k\ 1 \ 2\cdots\ell$.
Let $F_n^{<k12\cdots\bar{\ell}>}(321,21354)$ denote the set of those permutations that begin with $k\ 1 \ 2\cdots\ell-1$ but not $k\ 1 \ 2\cdots\ell$.

\begin{proof}[Proof of Theorem \ref{321-21354}]
Our proof is  by induction. One can directly check the equality \eqref{eq21354}  holds for $n\leq 3$.
Suppose that \eqref{eq21354} holds for $n-1$. We shall prove it holds for $n$ as well.
Let $\pi\in F_n(321,21354)$. If $\pi_1=1$, then $\pi=1\oplus\tau$ with $\tau\in F_{n-1}(321,21354)$. Thus it follows from the induction hypothesis that \[
|F_n^{(1)}(321,21354)|=|F_{n-1}(321,21354)|=2^{n-1}-\binom{n-1}{2}-1.
\]
Therefore it suffices to prove
the following equality
\begin{align*}
	|F_n^{(2)}(321,21354)|&=|F_{n}(321,21354)|-|F_n^{(1)}(321,21354)|\\[5pt]
	&= \left( 2^{n}-\binom{n}{2}-1 \right) - \left( 2^{n-1}-\binom{n-1}{2}-1 \right) \\[5pt]
	&=2^{n-1}-n+1.
\end{align*}
For the enumeration of $F_n^{(2)}(321,21354)$, our  classification is  according to the value of $\pi_1$.
\begin{enumerate}[1)]
	\item $\pi_1=2$. Then $\pi=2\ 1\oplus\tau$ with $\tau \in F_{n-2}(321,132)$. By Lemma~\ref{lem:321-132}, the number of permutations in this case is $|F_{n-2}(321,132)|=n-2$.
	\item $\pi_1=3$. Next we proceed to consider the position of 2.
	\begin{enumerate}[a)]
		\item The element 2 is immediately to the right of 1. We have $$\pi=3\ 1\ 2\oplus\tau,$$ where $\tau\in F_{n-3}(321,132)$. The number of permutations in this subcase is $$|F_{n-3}(321,132)|=n-3.$$
		\item There is only one element $m$ between 1 and 2 with $4\leq m\leq n$.
		\begin{itemize}
			\item $m=4$. Then $5,\ldots,n$ appearing after $3\ 1\ 4$ must be increasing  to avoid 21354. So $\pi=3\ 1\ 4\ 2\ 5\cdots n$.
			\item $m\geq 5$. First, to avoid 321, the sequence $4,\ldots,m-1$ appearing after $m$ must be increasing.
			Second, avoiding 21354 implies that the sequence $m+1,\ldots,n$ appearing after $3\ 1\ m$ must be increasing and  no elements in $\{m+1,\ldots,n\}$ can be inserted into the sites between 4 and $m-1$. The former is easy to see. As for the latter, if $x\in\{m+1,\ldots,n\}$ is inserted into one site between 4 and $m-1$, then $3\ 1\ 4\ x\ m-1$ form a copy of 21354. Hence, we can consider $4\cdots m-1$ as one element and insert it into the $n-m+1$ available sites that the elements $m+1,\ldots,n$ formed. There are $n-m+1$ ways to insert $4\cdots m-1$ to get $\pi$.
		\end{itemize}
	Thus, the number of permutations  in this subcase is\[
	1+\sum_{m=5}^{n}(n-m+1).
	\]
		\item There are at least two elements between 1 and 2. Then we have
		\[
		\pi=3\ 1\ 4\cdots m\ 2\ m+1\cdots n,
		\] where $5\leq m\leq n$.

		First, the elements $\pi_3$ and $\pi_4$ appearing before 2 must be increasing to avoid 321.
		Second, we have $\pi_3=4.$ If not, $\pi_3-1$ appears to the right of 2 and $\pi_3,\pi_4,\pi_3-1$ form a copy of  $\pattern$.
		Third, to avoid 21354, the elements $5,\ldots,m$ to the right of $3\ 1\ 4$ are in increasing order. Let the largest element to the left of 2 be $m$. So $\pi$ is of the above form.

		Therefore, the number of permutations in this subcase is $n-4$.
	\end{enumerate}
It is worth mentioning that the number of those permutations with $\pi_1=3$ and $\pi_2=1$ but $\pi_3\neq 2$ is\[
1+\sum_{m=5}^{n}(n-m+1)+(n-4)=\sum_{m=1}^{n-3}m=\binom{n-2}{2}.
\]
Thus, we have\[
|F_n^{<31\bar{2}>}(321,21354)|=\binom{n-2}{2}.
\]

\item $\pi_1=k$ with $4\leq k\leq n-1$. The number of permutations in this case is\[
|F_n^{<k12\cdots k-1>}(321,21354)|+\sum_{i=2}^{k-1}|F_n^{<k12\cdots \bar{i}>}(321,21354)|.
\]

	For every permutation $\pi$ that begins with $k\ 1\ 2\cdots k-1$, we have $\pi=k\ 1\ 2\cdots k-1\oplus \tau$ with $\tau\in F_{n-k}(321,132)$. Thus\[
	|F_n^{<k12\cdots k-1>}(321,21354)|=|F_{n-k}(321,132)|=n-k.
	\]
	In the following part, we discuss the enumeration of $F_n^{<k1\bar{2}>}(321,21354)$ for $k\geq 4$. The enumeration of $F_n^{<k12\cdots\bar{\ell}>}(321,21354)$ for $3\leq \ell\leq k-1$ can be reduced to this form. We classify according to the number of elements between 1 and 2.
	\begin{enumerate}[a)]
		\item There is only one element $m$ between 1 and 2. Let $\pi=k\ 1\ m\ 2\cdots$. Then to avoid 321, we have $m>k$ because $k$ appears before $m\ 2$. Here we consider $m\geq k+2$. The case of $m=k+1$ is discussed later in Subcase b). Next we need to consider the arrangement of the remaining elements $\{3,\ldots,k-1\}\cup\{k+1,\ldots,m-1\}\cup\{m+1,\ldots,n\}$.
		First, the sequence $3,\ldots,k-1$, the sequence $k+1,\ldots,m-1$, and the sequence $m+1,\ldots,n$  must be increasing to avoid the pattern 321, 321, and 21354, respectively.
		Second,  the sequence $k+1,\ldots,m-1$ must be to the right of the increasing sequence  $3\cdots k-1$ to avoid 321.
		Third, no elements from $m+1,\ldots,n$ can be inserted into the sites between $k+1$ and $m-1$. If $x\in \{m+1,\ldots,n\} $ is inserted into one of these sites, then $k,1,k+1,x,m-1$ form a copy of 21354. Hence, we can consider $k+1,\ldots,m-1$ as one element and thus consider $\{3,\ldots,k-1\}\cup\{k+1,\ldots,m-1\}\cup\{m+1,\ldots,n\}$ as two increasing sequences: one of length $k-2$ composed of elements from the first two sets, and the other of length $n-m$ composed of elements from $ \{m+1,\ldots,n\} $. By treating this as a counting problem, we can consider the number of permutations of the multiset $\{(k-2)\cdot A,(n-m)\cdot B\}$, which is $\binom{n-m+k-2}{k-2}$.

		\item There are at least two elements between 1 and 2. Then we claim that the elements between 1 and 2 are the elements $k+1,\ldots,m$ in increasing order with $k+1\leq m\leq n$ (notice that $m=k+1$ is discussed here). Namely $\pi$ is of the form $\pi=k\ 1\ k+1\cdots m\ 2\cdots$, where $m\geq k+1$. First, the elements between 1 and 2 must be greater than $k$ and they are increasing to avoid 321. Second, if $\pi_3\neq k+1$, the elements $\pi_3,\pi_4,\pi_3-1$ form a copy of  $\pattern$. So $\pi_3=k+1$.
		Third, from $\pi_4$ onwards, the elements follow a consecutive increasing sequence $\pi_4=k+2,\pi_5=k+3$ and so forth. If  at some point $\pi_{i}\neq\pi_{i-1}+1$, then $k,1,k+1,\pi_i,\pi_i-1$ form a copy of 21354.
		Similarly to the above, the sequence $3,\ldots,k-1$ and the sequence $m+1,\ldots,n$ are two increasing sequences of lengths $k-3$ and $n-m$, respectively. Thus the counting can be turned into counting the number of permutations of the multiset $\{(k-3)\cdot A,(n-m)\cdot B\}$, which is $\binom{n-m+k-3}{k-3}$.
	\end{enumerate}

	Thus,
	\allowdisplaybreaks
	\begin{align*}
		|F_n^{<k1\bar{2}>}(321,21354)|&=\sum_{m=k+2}^{n}\binom{n-m+k-2}{k-2}+\sum_{m=k+1}^{n}\binom{n-m+k-3}{k-3}\\
		&=\sum_{m=k-2}^{n-4}\binom{m}{k-2}+\sum_{m=k-3}^{n-4}\binom{m}{k-3}\\
		&=\binom{n-3}{k-1}+\binom{n-3}{k-2}
		=\binom{n-2}{k-1},
	\end{align*}
	where the third equality follows from the hockey stick identity.

For the enumeration of $|F_n^{<k12\cdots \bar{i}>}(321,21354)|$ with $i\geq 3$, we can consider $1\ 2\cdots i-1$ as one element playing the role of 1,  consider $i$ as 2 and consider $k$ as $k-i+2$. Thus, we have \[
|F_n^{<k12\cdots \bar{i}>}(321,21354)|=|F_{n-i+2}^{<(k-i+2)1\bar{2}>}(321,21354)|=\binom{n-i}{k-i+1}.
\]
Therefore, we obtain
\begin{align*}
	&|F_n^{<k12\cdots k-1>}(321,21354)|+\sum_{i=2}^{k-1}|F_n^{<k12\cdots \bar{i}>}(321,21354)|\\
	=&(n-k)+\binom{n-2}{k-1}+\sum_{i=3}^{k-1}\binom{n-i}{k-i+1}
	=\sum_{i=2}^{k}\binom{n-i}{k-i+1}.
\end{align*}
\item $\pi_1=n$. Avoiding 321 indicates that $2,\ldots,n-1$ to the right of $n$ must be increasing. So\[
\pi=n\ 1\ 2\cdots n-1.
\]
\end{enumerate}
Thus, we have
\allowdisplaybreaks
\begin{align*}
	|F_n^{(2)}(321,21354)|=&|F_n^{<21>}(321,21354)|+|F_n^{<312>}(321,21354)|+|F_n^{<31\bar{2}>}(321,21354)|\\
	&\quad +\sum_{k=4}^{n-1}\left(|F_n^{<k12\cdots k-1>}(321,21354)|+\sum_{i=2}^{k-1}|F_n^{<k12\cdots \bar{i}>}(321,21354)|\right)\\
	&\quad +|F_n^{<n1>}(321,21354)|\\
	=&(n-2)+(n-3)+\binom{n-2}{2}+\sum_{k=4}^{n-1}\sum_{i=2}^{k}\binom{n-i}{k-i+1}+1\\
	=&(n-2)+\sum_{k=3}^{n-1}\sum_{i=2}^{k}\binom{n-i}{n-k-1}+1\\
	=&(n-2)+\sum_{k=3}^{n-1}\Big(\binom{n-1}{k-1}-1 \Big)+1\\
	=&2^{n-1}-n+1,
\end{align*}
where the second-to-last equality follows from the hockey stick identity.

We have thus proved Theorem~\ref{321-21354} by induction.
\end{proof}

\subsection{Enumerating $F_n(321,31452)$}
The following three enumeration results are about \emph{Pell numbers}, denoted by $P_n$ with the initial condition $P_0=0, P_1=1$ and recurrence relation $P_n=2P_{n-1}+P_{n-2}$ for $n\geq 2$; see A000129 in OEIS~\cite{oeis}.
\begin{thm}\label{321-31452}
	For $n\geq 1$,\[
	|F_n(321,31452)|=\dfrac{P_n+P_{n-1}+1}{2}.
	\]
\end{thm}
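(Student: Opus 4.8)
The plan is to follow the same two-type strategy used throughout the paper, splitting $F_n(321,31452)$ according to whether $\pi_1=1$ or $\pi_2=1$, as licensed by Lemma~\ref{lem-12}. Since $31452$ does not begin with $1$, the first type is immediate: the observation recorded after Lemma~\ref{lem:classical_pattern} gives $|F_n^{(1)}(321,31452)|=|F_{n-1}(321,31452)|$, so an induction on $n$ reduces everything to understanding the $\pi_2=1$ case. Thus I would state and prove a proposition computing $|F_n^{(2)}(321,31452)|$, and then assemble the final count by induction, checking the Pell recurrence at the end.

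For the main proposition I would set $\pi_1=k$ and classify by the value of $k\in\{2,\ldots,n\}$, exactly as in the proofs of Propositions for $1243$, $1324$, and $14253$. The extreme cases $k=2$ (treat $2\,1$ as a single smallest element, reducing to $F_{n-2}$ of the same class) and $k=n$ (forcing $\pi=n\,1\,2\cdots n-1$ to avoid $321$) are routine. For intermediate $3\le k\le n-1$, avoiding $321$ forces the elements $2,\ldots,k-1$ after $k$ to be increasing, and I would analyze how the large elements $k+1,\ldots,n$ interleave with the block $2\cdots k-1$. The pattern $31452$ (with $k\,1$ playing the roles of $3\,1$) constrains which large elements may sit to the left of $k-1$ and in what configurations; I expect the key sub-analysis to mirror the $14253$ argument, where inserting large elements into interior sites creates a forbidden copy of the length-$5$ pattern. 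Establishing precisely which interleavings survive both the $321$, the Fishburn, and the $31452$ constraints is where the combinatorial bookkeeping lives.

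The hard part will be showing that the resulting counts satisfy the Pell recurrence and yield the closed form $\frac{P_n+P_{n-1}+1}{2}$. Unlike the earlier polynomial cases, here the per-$k$ contributions will presumably grow geometrically, and I anticipate needing to recognize a sum of binomial coefficients or a nested recursion as the Pell numbers. My concrete approach would be to let $a_n=|F_n(321,31452)|$ and $b_n=|F_n^{(2)}(321,31452)|$, derive $a_n=a_{n-1}+b_n$ from the type split, obtain a recurrence for $b_n$ from the case analysis (expecting something like $b_n$ expressed through earlier $b$'s or $a$'s via the $k=2$ reduction together with the interior-interleaving count), and then verify by induction that the proposed formula satisfies both relations. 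To close, I would confirm that $\frac{P_n+P_{n-1}+1}{2}$ obeys the inherited recurrence using $P_n=2P_{n-1}+P_{n-2}$, and check the small base cases $n\le 2$ directly. The principal obstacle is correctly enumerating the interior insertions in the $3\le k\le n-1$ case so that the telescoping sum collapses to the Pell expression rather than merely verifying an already-guessed answer.
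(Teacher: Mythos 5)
Your overall strategy matches the paper's skeleton exactly: the $\pi_1=1$/$\pi_2=1$ split from Lemma~\ref{lem-12}, induction on $n$, classification by $\pi_1=k$ with the $k=2$ reduction and the forced permutation $n\,1\,2\cdots(n-1)$ at $k=n$, and your anticipated system $a_n=a_{n-1}+b_n$ with $b_n$ expressed through earlier $a$'s and $b$'s is precisely what the paper implements (it runs a \emph{simultaneous} induction on $|F_n^{(1)}(321,31452)|=Q_{n-1}$ and $|F_n^{(2)}(321,31452)|=P_{n-1}$, where $Q_n=\frac{P_n+P_{n-1}+1}{2}$). However, the step that constitutes the actual proof is left undone in your proposal, and the guess you make about its shape points in the wrong direction. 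You expect the interior analysis for $3\le k\le n-1$ to "mirror the 14253 argument," i.e.\ a claim that large elements inserted left of $k-1$ must pile into a single site in increasing order. For 31452 the correct claim is stronger and simpler: \emph{at most one} element of $\{k+1,\ldots,n\}$ can lie to the left of $k-1$, since two such elements $x<y$ make $k,1,x,y,k-1$ a copy of $31452$, while $x>y$ makes $x,y,k-1$ a copy of $321$. So the "interleaving bookkeeping" collapses to choosing one of the $k-2$ sites for a single large element.

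The second missing idea is how that single insertion is counted. In the 14253 proof the tail after the insertion reduces to an already-known closed-form class ($F_m(321,3142)=2^{m-1}$, Lemma~\ref{lem3142}); here there is no such closed form. Instead, the paper deletes the prefix $k,1,2,\ldots,i$ (which cannot participate in any forbidden pattern with what follows), treats $i+1\cdots k-1$ as the new minimum, and recognizes the remainder as a permutation of length $n-k+1$ with $1$ in the second position, so the count for each fixed site $i$ is $|F^{(2)}_{n-k+1}(321,31452)|=P_{n-k}$ by the induction hypothesis \eqref{31452-2}. This self-referential reduction is exactly why the induction must carry the $F^{(2)}$ statement alongside the main one — a feature your plan gestures at ("nested recursion") but never sets up. With both ideas in place one gets
$b_n=Q_{n-2}+\sum_{k=3}^{n}\bigl((k-2)P_{n-k}+Q_{n-k}\bigr)$,
and the only remaining work is the Pell-identity verification that this equals $P_{n-1}$ (the paper's Appendix), which corresponds to the final step of your plan. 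Without the two combinatorial ingredients above, though, the sum you would need to telescope is never correctly written down, so the proposal as it stands has a genuine gap at its core.
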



\begin{proof}
	For our convenience, we denote $\frac{P_n+P_{n-1}+1}{2}$ by $Q_n$ for $n\geq1$ and define $Q_0=1$.

	Our proof is by induction on $n$ at the same time, with easily checkable base cases, for
	\begin{equation}\label{31452}
		|F^{(1)}_n(321,31452)|=Q_{n-1}
	\end{equation}
and \begin{equation}\label{31452-2}
	|F_n^{(2)}(321,31452)|=P_{n-1}.
\end{equation}
Suppose that the above two equalities hold for positive integers less than $n$. We shall prove them hold for $n$ as well. We classify according to the position of 1.
\begin{enumerate}[1)]
	\item $\pi=1\oplus\tau$ with $\tau\in F_{n-1}(321,31452)$. By the induction hypothesis, the number of permutations is\[
	|F_{n-1}(321,31452)|=Q_{n-1}.
	\]
	\item $\pi=2\ 1\oplus\tau$ with $\tau\in F_{n-2}(321,31452)$. The number of permutations in this case is \[
	|F_{n-2}(321,31452)|=Q_{n-2}.
	\]
	\item $\pi=k\ 1\cdots$ with $3\leq k\leq n$. The elements $2,\ldots,k-1$ are increasing to avoid 321. Next we classify according to whether there are elements in $k+1,\ldots,n$ to the left of $k-1$ or not.
	\begin{enumerate}[a)]
		\item  $\pi=k\ 1\cdots k-1\oplus\tau$ with $\tau\in F_{n-k}(321,31452)$. The number of permutations in this case is \[
		|F_{n-k}(321,31452)|=Q_{n-k}.
		\]
		\item There are some elements to the left of $k-1$.

		We claim that only one element from $\{k+1,\ldots,n\}$ can be inserted into exactly one of the $k-2$ sites between 1 and $k-1$. Suppose there are two elements $x$ and $y$, from the range $k+1$ to $n$, that are located to the left of $k-1$. If $x>y$, then $x,y,k-1$ form a copy of 321. Conversely, if $x<y$, then $k,1,x,y,k-1$ form a copy of $31452$, a contradiction.

		Suppose that $x$ is inserted into the site between $i$ and $i+1$ with $1\leq i\leq k-2$ and $k+1\leq x\leq n$. Then \[
		\pi=k\ 1\ 2\cdots i\ x\ i+1\cdots k-1\cdots.
		\]
		Since $k,1,2,\ldots,i$ cannot create a copy of  $\pattern$, 321, 31452 when followed by elements to their right, we can treat the enumeration in this case as the enumeration of permutations of length $n-k+1$ of the following form
		\[
		\pi^{\prime}=x-k+1\ 1\cdots.
		\] We just remove $k,1,2,\ldots,i$ and consider $i+1\cdots k-1$ as a whole part to play the role of the smallest element 1. Thus the enumeration for a fixed $i$ can be transformed into counting the number of permutations of length $n-k+1$ with the smallest element 1 in the second site. Therefore, by equality \eqref{31452-2}, the number of permutations in this case is \[
		(k-2)|F_{n-k+1}^{(2)}(321,31452)|=(k-2)P_{n-k}.
		\]
	\end{enumerate}
\end{enumerate}

  Thus, \begin{equation}\label{q+kp}
  	|F_{n}^{(2)}(321,31452)|=Q_{n-2}+\sum_{k=3}^{n}\Big((k-2)P_{n-k}+Q_{n-k}\Big)=P_{n-1},
  	\end{equation}
  where the last equality shall be proved in Appendix.

  Therefore, we have
  $$
  	|F_{n}(321,31452)|=|F_{n}^{(1)}(321,31452)|+|F_{n}^{(2)}(321,31452)|
  	=Q_{n-1}+P_{n-1}=Q_{n}.$$
  This completes the proof.
\end{proof}

\subsection{Enumerating $F_{n}(321,31524)$}
\begin{thm}\label{31524}
	For $n\geq 1$,\[
	|F_n(321,31524)|=\dfrac{P_n+P_{n-1}+1}{2}.
	\]
\end{thm}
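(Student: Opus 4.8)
The plan is to mirror the structure of the proof of Theorem~\ref{321-31452} almost verbatim, since the two forbidden patterns $31452$ and $31524$ differ only by a transposition of the last two values and both begin with the prefix $31$. I would set $Q_n=\frac{P_n+P_{n-1}+1}{2}$ with $Q_0=1$ and prove by simultaneous induction on $n$ the two equalities
\begin{equation*}
|F^{(1)}_n(321,31524)|=Q_{n-1}
\qquad\text{and}\qquad
|F_n^{(2)}(321,31524)|=P_{n-1},
\end{equation*}
after checking the small base cases directly. The first equality is immediate from the general observation recorded in the excerpt: since $31524=1\oplus 20413$ does not begin with $1$ as a shifted sum of $1$, we instead use $\pi=1\oplus\tau$ and the fact that avoidance of $31524$ by $\pi$ transfers to avoidance of $31524$ by $\tau$, giving $|F_n^{(1)}(321,31524)|=|F_{n-1}(321,31524)|=Q_{n-1}$.

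The core of the argument is the recurrence for $|F_n^{(2)}(321,31524)|$. I would classify $\pi\in F_n^{(2)}$ by the value $\pi_1=k$. The cases $k=2$ and $\pi=k\,1\,2\cdots k-1\oplus\tau$ contribute $Q_{n-2}$ and $\sum_{k=3}^{n}Q_{n-k}$ exactly as before, since these configurations do not interact with the distinction between $31452$ and $31524$. The decisive step is the case $3\le k\le n$ with at least one element of $\{k+1,\ldots,n\}$ inserted to the left of $k-1$. Here I expect to re-prove the key claim that \emph{exactly one} such element $x$ can be inserted, and into exactly one of the $k-2$ sites between $1$ and $k-1$: if two elements $x<y$ both lie left of $k-1$, then $k,1,x,y,k-1$ realizes the pattern $31524$ (one checks $k\mapsto 3$, $1\mapsto 1$, $x\mapsto 5$, $y\mapsto 2$, $k-1\mapsto 4$ gives the relative order $3\,1\,5\,2\,4$), while $x>y$ gives a $321$. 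This is the one place the pattern $31524$ rather than $31452$ must be invoked, and it is the main obstacle: I must verify that the relative order of $x,y,k-1$ against $k,1$ matches $31524$ exactly rather than $31452$. Granting the claim, removing $k,1,2,\ldots,i$ and treating $i+1\cdots k-1$ as the new smallest element reduces the count to permutations of length $n-k+1$ with the minimum in the second position, so this subcase contributes $(k-2)|F_{n-k+1}^{(2)}(321,31524)|=(k-2)P_{n-k}$ by the induction hypothesis.

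Assembling the cases yields the identical recurrence
\begin{equation*}
|F_n^{(2)}(321,31524)|=Q_{n-2}+\sum_{k=3}^{n}\Bigl((k-2)P_{n-k}+Q_{n-k}\Bigr)=P_{n-1},
\end{equation*}
whose closed form is exactly the summation identity already established (to be verified in the Appendix) in the proof of Theorem~\ref{321-31452}; no new algebraic work is needed. Finally I would conclude
\begin{equation*}
|F_n(321,31524)|=|F_n^{(1)}(321,31524)|+|F_n^{(2)}(321,31524)|=Q_{n-1}+P_{n-1}=Q_n,
\end{equation*}
which is the claimed formula. In short, the proposition is a ``pattern twin'' of Theorem~\ref{321-31452}, and the entire novelty is confined to re-checking that the forbidden configuration in the insertion argument spells $31524$; everything else, including the summation identity, is inherited.
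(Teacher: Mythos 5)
There is a genuine gap, and it sits exactly at the step you flagged as ``the one place the pattern must be invoked.'' Your key claim is false: if $x<y$ are two elements of $\{k+1,\ldots,n\}$ lying to the left of $k-1$, the subsequence $k,1,x,y,k-1$ has values satisfying $1<k-1<k<x<y$, so its relative order is $3,1,4,5,2$, i.e.\ it is an occurrence of $31452$ --- not of $31524$. (Your rank check assigns $x\mapsto 5$ and $y\mapsto 2$, which is impossible since $y$ is the largest of the five values and $k-1$ is the second smallest.) Since $31452$ is not forbidden here, such configurations are perfectly legal: for instance $\pi=3\,1\,4\,5\,2\,6$ is a Fishburn permutation avoiding both $321$ and $31524$, and it has the two elements $4,5$ to the left of $k-1=2$. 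Consequently your case analysis both misses genuine permutations (like this one) and admits illegal ones (e.g.\ $k\,1\,x\,2\cdots k-1\cdots$ with $x\ge k+2$ and $k+1$ to the right of $k-1$ contains $k,1,x,k-1,k+1$, which \emph{is} a $31524$), so the recurrence $(k-2)P_{n-k}$ cannot be justified this way; that the final number comes out right reflects the Wilf-equivalence being proved, not a valid argument.

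The paper's proof handles case 3b differently. Let $\ell$ be the \emph{largest} element of $\{k+1,\ldots,n\}$ to the left of $k-1$; then any $x$ with $k<x<\ell$ lying to the right of $k-1$ would create the copy $k,1,\ell,k-1,x$ of $31524$, so all of $k+1,\ldots,\ell$ must lie to the left of $k-1$, in increasing order. Distributing them among the $k-2$ sites between $1$ and $k-1$ is a stars-and-bars count, $\binom{\ell-3}{\ell-k}$ ways, and treating the first $\ell$ entries as a new minimal element gives $Q_{n-\ell}$ for the tail. This yields the recurrence
\[
|F_n(321,31524)|=Q_{n-1}+Q_{n-2}+\sum_{k=3}^{n}\Bigl(Q_{n-k}+\sum_{\ell=k+1}^{n}\binom{\ell-3}{\ell-k}Q_{n-\ell}\Bigr),
\]
which requires its own appendix identity (a binomial/hockey-stick computation), not the identity $Q_{n-2}+\sum_{k}((k-2)P_{n-k}+Q_{n-k})=P_{n-1}$ you hoped to inherit from Theorem~\ref{321-31452}. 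So the two theorems are not ``pattern twins'': the insertion structure forced by $31524$ is genuinely different from that forced by $31452$, and your proposal does not repair into a proof without replacing the core claim and the closing identity.
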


\begin{proof}
	Let $Q_n:= \frac{P_n+P_{n-1}+1}{2}.$  We prove by induction on $n$ that $|F_n(321,31524)|=Q_n.$  The base cases are easy to check. Assuming that this equality holds for positive integers less than $n$, we shall prove it holds for $n$ as well.
	\begin{enumerate}[1)]
		\item $\pi=1\oplus\tau$ with $\tau\in F_{n-1}(321,31524)$. By the induction hypothesis, the number of permutations~is\[
		|F_{n-1}(321,31524)|=Q_{n-1}.
		\]
		\item $\pi=2\ 1\oplus\tau$ with $\tau\in F_{n-2}(321,31524)$. The number of permutations in this case is \[
		|F_{n-2}(321,31524)|=Q_{n-2}.
		\]
		\item $\pi=k\ 1\cdots$ where $3\leq k\leq n$. The forbidden pattern 321 implies that $2,\ldots,k-1$ are in increasing order. Let\[
		\pi^{\prime}=k\ 1\ 2\cdots k-1.
		\]
		We should insert $k+1,\ldots,n$ into $\pi^{\prime}$ to get $\pi$.
		 Next, we classify the permutations based on whether there are elements to the left of $k-1$   in the range  $k+1$   through  $n$.

		\begin{enumerate}[a)]
			\item There are no elements in $k+1,\ldots,n$ to the left of $k-1$. Then we can consider $k\ 1\ 2\cdots k-1$ as the smallest element. The number of permutations is\[
			|F_{n-k}(321,31524)|=Q_{n-k}.
			\]
			\item There are elements to the left of $k-1$. Assume that $k+1\leq \ell\leq n$ is the largest among them. We claim that the elements $k+2,\ldots,\ell-1$ are to the left of $\ell$ in increasing order. If there exists an element $x$ smaller than $\ell$ that is to the right of $k-1$, then $k,1,\ell,k-1,x$ form a copy of 31524.

			Thus, we can insert $k+1,\ldots,\ell$ into the $k-2$ sites between 1 and $k-1$ in increasing order. The enumeration can be transformed into counting the number of non-negative integer solutions of \[
			x_1+x_2+\cdots+x_{k-2}=\ell-k,
			\]which is $\binom{\ell-3}{\ell-k}$.

			Next, we consider the first $\ell$ elements as a cohesive unit, which is a rearrangement of $1,2,\ldots,\ell$, and they play the role of the smallest element. By the induction hypothesis, $$|F_{n-\ell}(321,31524)|=Q_{n-\ell}.$$

			In this subcase, the number of permutations is\[
			\sum_{\ell=k+1}^{n}\binom{\ell-3}{\ell-k}Q_{n-\ell}.
			\]
		\end{enumerate}
	\end{enumerate}
Therefore, we have $$
	|F_n(321,31524)|=Q_{n-1}+Q_{n-2}+\sum_{k=3}^{n}\Big(Q_{n-k}+\sum_{\ell=k+1}^{n}\binom{\ell-3}{\ell-k}Q_{n-\ell}\Big)=Q_{n}, \label{eq:A}
$$
where the last equality shall be proved in Appendix.
\end{proof}

\subsection{Enumerating $F_n(321,41523)$}
\begin{thm}\label{321-41523}
	For $n\geq 1$,\[
	|F_n(321,41523)|=\dfrac{P_n+P_{n-1}+1}{2}.
	\]
\end{thm}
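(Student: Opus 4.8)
The plan is to prove $|F_n(321,41523)|=Q_n$ by induction on $n$, where I write $Q_n:=\frac{P_n+P_{n-1}+1}{2}$ and set $Q_0:=1$, following the same strategy used for Theorems~\ref{321-31452} and~\ref{31524}. After checking the small base cases directly, I assume the formula for all lengths below $n$ and split $F_n(321,41523)$ by the position of the entry $1$, which by Lemma~\ref{lem-12} is either $1$ or $2$. Since the pattern $41523$ has its least entry in the second position, a leading $1$ (the global minimum, placed first) can never participate in a copy of it; hence $\pi=1\oplus\tau$ with $\tau\in F_{n-1}(321,41523)$, so that $|F_n^{(1)}(321,41523)|=|F_{n-1}(321,41523)|=Q_{n-1}$ by the observation recorded earlier (that $|F_n^{(1)}(321,\sigma)|=|F_{n-1}(321,\sigma)|$ when $\sigma$ does not begin with $1$) together with the induction hypothesis. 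Because $Q_n=Q_{n-1}+P_{n-1}$, the theorem reduces to proving the single identity $|F_n^{(2)}(321,41523)|=P_{n-1}$, and this is where the real work lies.

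To enumerate $F_n^{(2)}(321,41523)$ I classify by the value $k:=\pi_1$. The value $k=2$ gives $\pi=2\,1\oplus\tau$ with $\tau\in F_{n-2}(321,41523)$, contributing $Q_{n-2}$. For $3\le k\le n$, avoidance of $321$ forces the entries $2,\ldots,k-1$ to appear after $k$ in increasing order, so the task becomes locating the large entries $k+1,\ldots,n$. When none of them lies to the left of $k-1$, the prefix $k\,1\,2\cdots(k-1)$ contracts to a single smallest element and this subcase contributes $Q_{n-k}$.

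The main obstacle is the remaining subcase, where some large entries sit to the left of $k-1$, and here I must combine all three avoidances at once. Reading $k\,1$ as the roles $4$ and $1$ of $41523$ shows that no large entry may precede two of the small entries $2,\ldots,k-1$, which confines every leftward large entry to the gap immediately before $k-1$. Within that block, avoidance of $321$ makes these entries increase, and avoidance of $\pattern$ forces all but the largest of them to take the consecutive values $k+1,k+2,\ldots$, while the entries placed to the right of $k-1$ are themselves constrained to form a Fishburn, $321$-avoiding tail whose entries below the leftward maximum must increase. The surviving copies of $41523$ (those using a large entry as the role $4$ or $5$ together with tail entries) then govern exactly how this tail may be arranged. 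The delicate point will be to verify that, after this analysis, a suitable prefix is a genuine rearrangement of an initial segment $1,\ldots,\ell$ that can be contracted to the smallest element, so that the subcase reduces to a length-$(n-\ell)$ problem; checking that no copy of $41523$ straddles this cut, especially when the leftward block skips a value, is the crux of the argument.

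Collecting the contributions yields a recurrence of the form
\[
|F_n^{(2)}(321,41523)|=Q_{n-2}+\sum_{k=3}^{n}\Big(Q_{n-k}+\sum_{\ell}c_{k,\ell}\,Q_{n-\ell}\Big)=P_{n-1},
\]
where $c_{k,\ell}$ counts the admissible placements of the large entries in the leftward subcase. I would record the resulting summation identity in the Appendix, exactly as is done for Theorems~\ref{321-31452} and~\ref{31524}, proving it by repeated use of the hockey-stick identity together with the Pell recurrence $P_n=2P_{n-1}+P_{n-2}$. Combining the recurrence with this identity gives $|F_n^{(2)}(321,41523)|=P_{n-1}$, whence $|F_n(321,41523)|=Q_{n-1}+P_{n-1}=Q_n=\frac{P_n+P_{n-1}+1}{2}$, completing the induction.
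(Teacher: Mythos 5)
Your skeleton coincides with the paper's: the same double induction (primary statement $|F_n(321,41523)|=Q_n$ with $Q_n=\frac{P_n+P_{n-1}+1}{2}$, secondary statement $|F_n^{(2)}(321,41523)|=P_{n-1}$), the same split by the position of $1$ and then by $k=\pi_1$, and the correct key confinement observation: if a large entry $x>k$ preceded two of the small entries $i<j\le k-1$, then $k,1,x,i,j$ would form a copy of $41523$, so any large entries lying left of $k-1$ must sit in the single gap just before $k-1$. But your write-up stops exactly at the crux. You never determine how the tail (the part after $k-1$, and after $2$) can be arranged, you leave the contributions as undetermined coefficients $c_{k,\ell}$, and you defer an unstated and unproven summation identity to an appendix. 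What you have is a roadmap with the hardest leg labelled ``the delicate point,'' not a proof.

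Moreover, the reduction you propose for that subcase --- contract a prefix that is a rearrangement of $1,\ldots,\ell$ and pick up a factor $Q_{n-\ell}$ --- breaks down in precisely the situation you flag: when the largest leftward entry skips a value (the leftward block is $k+1,\ldots,m,\ell$ with $\ell>m+1$), the prefix ending at $k-1$ (or at $2$, in the $k=3$ case) is \emph{not} a rearrangement of an initial segment, so no such contraction is available. The paper resolves this differently, and this is where the double induction actually earns its keep: it deletes the entries to the left of $\ell$, relabels, and observes that what remains is an arbitrary permutation of shorter length whose second entry is its minimum, i.e.\ an arbitrary element of a set $F_{m}^{(2)}(321,41523)$, counted by a Pell number via the \emph{secondary} induction hypothesis; likewise the entire case $k\ge 4$ is reduced wholesale to the case $k=3$ of length $n-k+3$, contributing $P_{n-k+1}$. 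You introduce the secondary statement but never invoke it, and your purported all-$Q$ recurrence does not match the structure of the problem, whose natural recurrence is $|F_n^{(2)}(321,41523)|=Q_{n-2}+P_{n-2}+\sum_{k=4}^{n}P_{n-k+1}$, mixing $Q$-terms and $P$-terms. Until the tail analysis and these reductions are carried out, the theorem is not proved.
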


\begin{proof}
	Our proof is by induction on $n$ at the same time, with easily checkable base cases, for
	\begin{equation}\label{41523}
		|F_n(321,41523)|=Q_{n}
	\end{equation}
	and \begin{equation}\label{41523-2}
		|F_n^{(2)}(321,41523)|=P_{n-1}.
	\end{equation}
	Suppose that the above two equality hold for positive integers less than $n$. We shall prove them hold for $n$ as well.

	\begin{enumerate}[1)]
			\item $\pi=1\oplus\tau$ with $\tau\in F_{n-1}(321,41523)$. By induction hypothesis, the number of permutations is\[
		|F_{n-1}(321,41523)|=Q_{n-1}.
		\]
		\item $\pi=2\ 1\oplus\tau$ with $\tau\in F_{n-2}(321,41523)$. The number of permutations in this case is \[
		|F_{n-2}(321,41523)|=Q_{n-2}.
		\]
		\item $\pi=3\ 1 \cdots.$ We proceed based on the position of 2.
		\begin{enumerate}[a)]
			\item $\pi=3\ 1\ 2\oplus\tau$ with $\tau\in F_{n-3}(321,41523)$. The number of permutations is \[
			|F_{n-3}(321,41523)|=Q_{n-3}.
			\]
			\item $\pi=3\ 1\ \ell\ 2\cdots$ with $4\leq\ell\leq n$. By equality \eqref{41523-2}, the number of permutations in this subcase is\[
			|F_{n-2}^{(2)}(321,41523)|=P_{n-3}.
			\]
			\item There are at least two elements between 1 and 2. We claim that the form of permutations in this case is \[
			\pi=3\ 1\ 4\cdots m\ \ell\ 2\cdots,\quad 4\leq m\leq n-1,\ m+1\leq\ell\leq n.
			\]
			First we prove $\pi_3=4$. Assuming $\pi_3\neq 4$, we consider the relative order of $\pi_3$ and $\pi_4$. To avoid the pattern 321, it must be the case that $\pi_3<\pi_4$. Consequently, $\pi_3,\pi_4,\pi_3-1$ form a copy of  $\pattern$, contradicting the fact that $\pi$ avoids this pattern.
			Furthermore, to avoid $\pattern$, we deduce that $\pi_4=5$, $\pi_5=6$ and so forth until $\pi_{m-1}=m$.
			In this particular subcase, when performing the enumeration, we can disregard the elements to the left of $\ell$, as they cannot participate in every occurrence of the patterns  $ \pattern $, 321, or 41523. Instead, we just focus  on the permutations of length $n-m+1 $ whose first element is greater than 1 and second element is 1. Thus the number of permutations in this subcase is
			\[
			|F_{n-m+1}^{(2)}(321,41523)|=P_{n-m}.
			\]
		\end{enumerate}
		So the number of permutations in this case is\[
	Q_{n-3}+P_{n-3}+\sum_{m=4}^{n-1}P_{n-m}=P_{n-2}.
	\]
	\item $\pi=k\ 1\cdots$ with $4\leq k\leq n$. To avoid 321, the elements $2,\ldots,k-1$ are in increasing order. Let\[
	\pi^{\prime}=k\ 1\ ^{1}\ 2\ ^{2}\cdots\ k-2\ ^{k-2}\ k-1\ ^{k-1}.
	\]We claim that every element $x$ with $k+1 \le x \le n$ cannot appear to the left of $k-2$. If not, $k,1,x,k-2,k-1$ form a copy of 41523. Thus we can treat $1\ 2\cdots k-2$ as a single element playing the role of 1. As a result, the enumeration reduces to counting the number of permutations of length $n-k+3$ where the first element is 3 and the second element is 1, namely the enumeration in case 3). The number of permutations is then $P_{n-k+1}$.
	\end{enumerate}
Therefore, we have \[
|F_n^{(2)}(321,41523)|=Q_{n-2}+P_{n-2}+\sum_{k=4}^{n}P_{n-k+1} =P_{n-1},
\]
and consequently
\[
|F_n(321,41523)|=|F_n^{(1)}(321,41523)|+|F_n^{(2)}(321,41523)|=Q_{n-1}+P_{n-1}=Q_{n}.
\]

\vspace{-1cm}
\end{proof}

\appendix

\setcounter{section}{1}
\section*{Appendix}\label{appendix}

In the appendix, we prove some identities about Pell numbers which are used in Section 4.

\begin{lem}\label{pn-sum}
	For $n\geq 1$, we have
	\begin{equation}
		\sum_{i=1}^{n} P_i=\dfrac{P_{n+1}+P_n-1}{2}.\label{Pn-sum}
	\end{equation}
\end{lem}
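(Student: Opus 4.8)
The plan is to prove \eqref{Pn-sum} by induction on $n$, using only the defining recurrence $P_n = 2P_{n-1} + P_{n-2}$ together with the initial values $P_0 = 0$ and $P_1 = 1$. For the base case $n = 1$, I would evaluate the left-hand side as $P_1 = 1$ and the right-hand side as $(P_2 + P_1 - 1)/2$; since $P_2 = 2P_1 + P_0 = 2$, this equals $(2 + 1 - 1)/2 = 1$, so the two sides agree.

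For the inductive step, assume \eqref{Pn-sum} holds for some $n \geq 1$. Adding $P_{n+1}$ to both sides of the inductive hypothesis gives
\[
\sum_{i=1}^{n+1} P_i = \frac{P_{n+1} + P_n - 1}{2} + P_{n+1} = \frac{3P_{n+1} + P_n - 1}{2}.
\]
It then remains only to recognize that the numerator is precisely $P_{n+2} + P_{n+1} - 1$: applying the recurrence in the form $P_{n+2} = 2P_{n+1} + P_n$ yields $P_{n+2} + P_{n+1} - 1 = 3P_{n+1} + P_n - 1$, which matches and closes the induction.

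Alternatively, I would record a short telescoping argument that avoids induction altogether. The key observation is that the recurrence can be rewritten as $2P_n = P_{n+1} - P_{n-1} = (P_{n+1} + P_n) - (P_n + P_{n-1})$. Setting $a_m := P_{m+1} + P_m$, this says $P_n = (a_n - a_{n-1})/2$, so that the sum telescopes:
\[
\sum_{i=1}^{n} P_i = \frac{a_n - a_0}{2} = \frac{(P_{n+1} + P_n) - (P_1 + P_0)}{2} = \frac{P_{n+1} + P_n - 1}{2},
\]
using $P_1 + P_0 = 1$. There is no genuine obstacle in either route: the only place demanding a little care is the correct handling of the Pell initial values $P_0 = 0$ and $P_1 = 1$, either in the base case of the induction or in evaluating $a_0$; once those are pinned down, the computation is immediate.
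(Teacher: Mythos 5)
Your inductive argument is correct and is essentially the paper's own proof: the same base case $n=1$ and the same inductive step of adding $P_{n+1}$ and applying the recurrence $P_{n+2}=2P_{n+1}+P_n$ to the numerator. Your alternative telescoping argument is also correct and is a genuinely different (and arguably cleaner) route: rewriting the recurrence as $2P_i = (P_{i+1}+P_i)-(P_i+P_{i-1})$ exhibits each $P_i$ as half a difference of consecutive terms of $a_m := P_{m+1}+P_m$, so the sum collapses to $\tfrac{1}{2}(a_n - a_0)$ with no induction needed. What the telescoping buys is that it makes the appearance of the combination $P_{n+1}+P_n$ on the right-hand side conceptually transparent rather than something verified after the fact; what the paper's induction buys is uniformity, since the same inductive template is reused for the other Pell identities in the appendix (such as $\sum_{k=1}^{n} kP_{n-k} = \tfrac{P_{n+1}-n-1}{2}$), where a one-line telescoping is not available. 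Both routes handle the initial values $P_0=0$, $P_1=1$ correctly, so there is nothing to fix.
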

\begin{proof}
	We shall prove by induction on $n$. For $n=1$, the left-hand of \eqref{Pn-sum} equals $P_1=1$. The right-hand of \eqref{Pn-sum} equals  $\frac{P_2+P_1-1}{2}=\frac{2+1-1}{2}=1$. Suppose this equality holds for positive integers less than $n+1$. We shall prove it holds for $n+1$ as well.
	\begin{align*}
		\sum_{i=0}^{n+1} P_i=\sum_{i=0}^{n}P_i+P_{n+1}
	    =\frac{P_{n+1}+P_n-1}{2}+P_{n+1}
		=\frac{2P_{n+1}+P_n+P_{n+1}-1}{2}
		=\frac{P_{n+2}+P_{n+1}-1}{2}.
	\end{align*}
	We are done.
\end{proof}
Recall that $Q_n:= \frac{P_n+P_{n-1}+1}{2}$ for $n\geq1$ and define $Q_0=1$.
Direct computation leads to the following identity.
\begin{cor}
	For $n\geq 1$, we have
	\begin{equation}\label{Qn-sum}
		\sum_{i=0}^{n}Q_{i}=\dfrac{P_{n+1}+n+1}{2}.
	\end{equation}
\end{cor}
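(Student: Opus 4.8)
The plan is to prove \eqref{Qn-sum} by direct computation, reducing everything to the already-established identity \eqref{Pn-sum} together with the Pell recurrence $P_{n+1}=2P_n+P_{n-1}$. First I would isolate the $i=0$ term, writing $\sum_{i=0}^{n} Q_i = 1 + \sum_{i=1}^{n} Q_i$, and then substitute the definition $Q_i = \frac{P_i + P_{i-1} + 1}{2}$, which is valid for $i\geq 1$. This splits the remaining sum into three pieces, namely $\sum_{i=1}^n P_i$, $\sum_{i=1}^n P_{i-1}$, and $\sum_{i=1}^n 1 = n$, so that $\sum_{i=1}^n Q_i = \frac{1}{2}\bigl(\sum_{i=1}^n P_i + \sum_{i=1}^n P_{i-1} + n\bigr)$.

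Next I would evaluate the two Pell sums. The first is exactly \eqref{Pn-sum}, giving $\frac{P_{n+1} + P_n - 1}{2}$. For the second, I would reindex $\sum_{i=1}^n P_{i-1} = \sum_{j=0}^{n-1} P_j = \sum_{j=1}^{n-1} P_j$, using $P_0 = 0$, and then apply \eqref{Pn-sum} with $n-1$ in place of $n$ to obtain $\frac{P_n + P_{n-1} - 1}{2}$. One checks that this value remains correct in the degenerate case $n=1$, where the shifted sum is empty and the closed form $\frac{P_1+P_0-1}{2}$ likewise vanishes.

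Assembling these three contributions gives $\sum_{i=1}^n Q_i = \frac{1}{4}\bigl(P_{n+1} + 2P_n + P_{n-1} - 2 + 2n\bigr)$. The key simplification is the Pell recurrence, which rewrites $2P_n + P_{n-1}$ as $P_{n+1}$ and hence collapses $P_{n+1} + 2P_n + P_{n-1}$ to $2P_{n+1}$; therefore $\sum_{i=1}^n Q_i = \frac{P_{n+1} + n - 1}{2}$. Adding back the isolated $i=0$ term yields $\sum_{i=0}^n Q_i = 1 + \frac{P_{n+1}+n-1}{2} = \frac{P_{n+1}+n+1}{2}$, which is precisely \eqref{Qn-sum}.

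I do not expect a genuine obstacle here: once \eqref{Pn-sum} is available, the argument is a routine manipulation, and this matches the remark in the text that \emph{direct computation} suffices. The only points demanding minor care are the index shift in the second Pell sum and the boundary case $n=1$ where that sum is empty, both of which are resolved by the single observation that $P_0 = 0$.
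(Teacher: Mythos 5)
Your proof is correct and is exactly the ``direct computation'' the paper alludes to: it substitutes the definition of $Q_i$, reduces both resulting Pell sums to the identity \eqref{Pn-sum} (handling the index shift via $P_0=0$), and collapses $P_{n+1}+2P_n+P_{n-1}$ to $2P_{n+1}$ using the Pell recurrence. The paper gives no further detail for this corollary, so your argument supplies precisely the intended verification, with the boundary case $n=1$ handled correctly.
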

%
%
Another identity about $P_k$ is as follows.
\begin{lem}
	For $n\geq 1$, we have\begin{equation}\label{k.pn-k}
		\sum_{k=1}^{n}kP_{n-k}=\dfrac{P_{n+1}-n-1}{2}.
	\end{equation}
\end{lem}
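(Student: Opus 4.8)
The plan is to sidestep the fourth-order recurrence that the convolution $\sum_{k}kP_{n-k}$ would otherwise satisfy, and instead reduce the claim directly to the partial-sum identity \eqref{Pn-sum} proved just above. The key observation is that the weight $k$ can be written as $k=\sum_{j=1}^{k}1$, so that after interchanging the order of summation the weighted sum becomes a sum of unweighted partial sums of Pell numbers, each of which \eqref{Pn-sum} evaluates in closed form.

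Concretely, first I would rewrite the left-hand side and swap the summation order over the region $1\le j\le k\le n$:
\[
\sum_{k=1}^{n}kP_{n-k}=\sum_{k=1}^{n}\Big(\sum_{j=1}^{k}1\Big)P_{n-k}=\sum_{j=1}^{n}\sum_{k=j}^{n}P_{n-k}=\sum_{j=1}^{n}\sum_{m=0}^{n-j}P_m,
\]
where the last equality substitutes $m=n-k$. Since $P_0=0$, each inner sum equals $\sum_{m=1}^{n-j}P_m$, which by \eqref{Pn-sum} is $\tfrac12\big(P_{n-j+1}+P_{n-j}-1\big)$; this formula remains valid at the boundary $n-j=0$, where both sides vanish. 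Summing over $j$ then splits into two shifted copies of the partial Pell sum together with a linear term, and applying \eqref{Pn-sum} once more to each copy yields
\[
\sum_{k=1}^{n}kP_{n-k}=\frac{1}{2}\Big(\frac{P_{n+1}+P_n-1}{2}+\frac{P_n+P_{n-1}-1}{2}-n\Big).
\]
Finally I would combine the fractions and use the Pell recurrence $P_{n+1}=2P_n+P_{n-1}$ to simplify $P_{n+1}+2P_n+P_{n-1}=2P_{n+1}$, so that the whole expression collapses to $\tfrac14\big(2P_{n+1}-2-2n\big)=\tfrac12\big(P_{n+1}-n-1\big)$, which is the asserted value.

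I do not expect a genuine obstacle: the argument is a single rearrangement followed by two applications of \eqref{Pn-sum} and one use of the recurrence, and the cancellation is forced by $2P_n+P_{n-1}=P_{n+1}$. The only point needing mild care is the index bookkeeping in the interchange of summation and the verification of the $n-j=0$ boundary term, so that \eqref{Pn-sum} applies uniformly to every inner sum. As a shorter alternative, I note that the double sum above equals $\sum_{s=1}^{n}Q_s-n$, since $\sum_{m=0}^{r}P_m=Q_{r+1}-1$; invoking \eqref{Qn-sum} with $Q_0=1$ then gives the result in one line. I would keep the self-contained version based on \eqref{Pn-sum} as the primary proof.
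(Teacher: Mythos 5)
Your proof is correct, but it follows a genuinely different route from the paper's. The paper argues by induction on $n$: it peels off one layer via $\sum_{k=1}^{n+1}kP_{n+1-k}=\sum_{k=1}^{n}kP_{n-k}+\sum_{i=0}^{n}P_i$, then closes the step using the induction hypothesis, the partial-sum identity \eqref{Pn-sum}, and the Pell recurrence. You instead evaluate the sum directly: writing $k=\sum_{j=1}^{k}1$ and interchanging summation converts the weighted convolution into an iterated partial sum $\sum_{j=1}^{n}\sum_{m=0}^{n-j}P_m$, which two applications of \eqref{Pn-sum} and the recurrence $2P_n+P_{n-1}=P_{n+1}$ collapse to the closed form, with no induction beyond what is already embedded in \eqref{Pn-sum}. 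The two arguments use the same ingredients; indeed the paper's decomposition $kP_{n+1-k}=(k-1)P_{n+1-k}+P_{n+1-k}$ is exactly one layer of your interchange, iterated $n$ times, so your computation can be viewed as unrolling the paper's induction. What your version buys is transparency of structure (the weighted sum is a double partial sum of Pell numbers) and, via $\sum_{m=0}^{r}P_m=Q_{r+1}-1$ together with \eqref{Qn-sum}, a one-line variant that ties the lemma directly to the quantity $Q_n$ the paper ultimately needs; what the paper's version buys is a shorter, more mechanical verification with no bookkeeping of summation regions or boundary cases such as your check at $n-j=0$, which you do handle correctly.
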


\begin{proof}
	We shall prove the statement by induction on $n$. For $n=1$, it is easy to check both sides of \eqref{k.pn-k} equal to 0. Suppose this equality holds for $n$. We shall prove it holds for $n+1$ as well.
	$$
		\sum_{k=1}^{n+1}kP_{n+1-k}=\sum_{k=1}^{n}kP_{n-k}+\sum_{i=0}^{n}P_{i}
		=\frac{P_{n+1}-n-1}{2}+\frac{P_{n+1}+P_n-1}{2}
		=\frac{P_{n+2}-n-2}{2}.
	$$
	This completes the proof.
\end{proof}

Now we can prove the equality \eqref{q+kp} which was used in the proof of Theorem \ref{321-31452}.
\begin{proof}[Proof of equality \eqref{q+kp}]
	It follows from \eqref{Pn-sum}, \eqref{Qn-sum}, and \eqref{k.pn-k} that
	\begin{align*}
		&\, Q_{n-2}+\sum_{k=3}^n\big( (k-2) P_{n-k}+Q_{n-k} \big) \\
		=&\sum_{i=0}^{n-2}Q_{i}+\sum_{k=1}^nkP_{n-k}-P_{n-1}-2P_{n-2}-2\sum_{i=0}^{n-3}P_{i}\\
		=&\frac{P_{n-1}+n-1}{2}+\frac{P_{n+1}-n-1}{2}-2\, \dfrac{P_{n-2}+P_{n-3}-1}{2}-P_{n-1}-2P_{n-2}\\
		=&\frac{P_{n+1}+P_{n-1}}{2}-3P_{n-2}-P_{n-3}-P_{n-1}	=P_{n-1}.
	\end{align*}

	\vspace{-1cm}

	\ \ \
\end{proof}
\allowdisplaybreaks

We proceed to prove \eqref{eq:A} which was used in the proof of Theorem~\ref{31524}.

\begin{proof}[Proof of equality \eqref{eq:A}]
	We first interchange the summations of $ \sum_{k=3}^{n}\sum_{\ell=k+1}^{n}\binom{\ell-3}{\ell-k}Q_{n-\ell} $  to simplify:
	$$
		\sum_{k=3}^{n}\sum_{\ell=k+1}^{n}\binom{\ell-3}{\ell-k}Q_{n-\ell}
		=\sum_{\ell=4}^{n}\sum_{k=3}^{\ell-1}\binom{\ell-3}{\ell-k}Q_{n-\ell}
		=\sum_{\ell=4}^{n}(2^{\ell-3}-1)Q_{n-\ell}
		=\sum_{\ell=0}^{n-4}2^{n-\ell-3}Q_{\ell}-\sum_{\ell=0}^{n-4}Q_{\ell}.
	$$
	It suffices to prove that \[
	\sum_{\ell=0}^{n-4}2^{n-\ell-3}Q_{\ell}=Q_n-Q_{n-1}-Q_{n-2}-Q_{n-3}.
	\]
	The base case is easy to check. Suppose this equality holds for positive integers less than $n+1$. We shall prove it holds for $n+1$ as well.
	\begin{align*}
		&\sum_{\ell=0}^{n-3}2^{n-\ell-2}Q_{\ell}=2\sum_{\ell=0}^{n-4}2^{n-\ell-3}Q_{\ell}+2Q_{n-3}
		=2(Q_n-Q_{n-1}-Q_{n-2}-Q_{n-3})+2Q_{n-3}\\
		=&Q_{n+1}-Q_{n}-Q_{n-1}-Q_{n-2}.
	\end{align*}
	This completes the proof.
\end{proof}
\end{document}